\crefname{figure}{Figure}{Figures}
\crefname{table}{Table}{Tables}
\let\chige\c
\let\dotlessi\i
\let\caron\v
\edef\csname \@alph\@tempcnta\endcsname{{\noexpand\boldsymbol \@alph\@tempcnta}}
\edef\csname \@Alph\@tempcnta\endcsname{{\noexpand\boldsymbol \@Alph\@tempcnta}}
\edef\csname b\@Alph\@tempcnta\endcsname{\noexpand\mathbb{\@Alph\@tempcnta}}
\edef\csname \@Alph\@tempcnta C\endcsname{\noexpand\mathcal{\@Alph\@tempcnta}}
\edef\csname f\@Alph\@tempcnta\endcsname{\noexpand\mathfrak{\@Alph\@tempcnta}}
\edef\csname \@arabic\@tempcnta\endcsname{{\noexpand\boldsymbol \@arabic\@tempcnta}}
\def\Natural{\mbox{$\mathbb{N}$}}
\def\Real{\mbox{$\mathbb{R}$}}
\def\SymMat{\mbox{$\mathbb{S}$}}
\DeclareMathOperator{\trace}{tr}
\DeclareMathOperator{\rank}{rank}
\DeclareMathOperator{\diag}{diag}
\newcommand{\trans}[1]{#1^{\operatorname{T}}}
\newcommand{\ip}[2]{{#1} \bullet {#2}}
\newcommand{\subto}{\mbox{s.t.}}
\newtheorem{theorem}{Theorem}[section]
\newtheorem*{theorem*}{Theorem}
\newtheorem{prop}[theorem]{Proposition}
\newtheorem*{prop*}{Proposition}
\newtheorem{lemma}[theorem]{Lemma}
\newtheorem*{lemma*}{Lemma}
\newtheorem*{coro*}{Corollary}
\newtheorem{definition}[theorem]{Definition}
\newtheorem*{definition*}{Definition}
\newtheorem{assum}[theorem]{Assumption}
\newtheorem*{assum*}{Assumption}
\newtheorem*{rema*}{Remark}
\def\@themcountersep{}
\definecolor{lred}{rgb}{1,0.8,0.5}
\definecolor{lblue}{rgb}{0.8,0.8,1}
\definecolor{dred}{rgb}{0.6,0,0}
\definecolor{dblue}{rgb}{0,0,0.7}
\definecolor{violet}{rgb}{0.5804,0.0000,0.8275}
\definecolor{purple}{rgb}{0.2400,0.5700,0.2500}
\definecolor{TGreen}{rgb}{0,0.50,0.10}
\newcommand*\linenomathpatch[1]{%
    \cspreto{#1}{\linenomath}%
    \cspreto{#1*}{\linenomath}%
    \csappto{end#1}{\endlinenomath}%
    \csappto{end#1*}{\endlinenomath}%
}
\newcommand*\linenomathpatchAMS[1]{%
    \cspreto{#1}{\linenomathAMS}%
    \cspreto{#1*}{\linenomathAMS}%
    \csappto{end#1}{\endlinenomath}%
    \csappto{end#1*}{\endlinenomath}%
}
\let\linenomathAMS\linenomathWithnumbers
\patchcmd\linenomathAMS{\advance\postdisplaypenalty\linenopenalty}{}{}{}
\let\linenomathAMS\linenomathNonumbers
\title{Tight Semidefinite Relaxations for Verifying Robustness of 
% Single-layer 
Neural Networks}
\let\@fnsymbol\@arabic
\author{
\normalsize
    Godai Azuma\thanks{Department of Industrial and Systems Engineering,
    Aoyama Gakuin University, 5-10-1-O-410b Fuchinobe, Chuo-ku, Sagamihara-shi, Kanagawa 252-5258, Japan ({\tt azuma@ise.aoyama.ac.jp}).
    The research of Godai Azuma was supported by JSPS KAKENHI Grant Number JP24K20738.}
    \textsuperscript{,}\thinspace \thanks{Department of  Mathematical and Computing Science,
    Institute of Science Tokyo, 2-12-1-W8-29 Oh-Okayama, Meguro-ku, Tokyo 152-8550, Japan.
    ({\tt Makoto.Yamashita@comp.isct.ac.jp}).
    The research of Makoto Yamashita was partially supported by JSPS KAKENHI Grant Number 24K14836.}
\and
\normalsize
	Sunyoung Kim\thanks{Department of Mathematics, Ewha W. University, 52 Ewhayeodae-gil, Sudaemoon-gu,
	Seoul 03760, Korea  ({\tt skim@ewha.ac.kr}).
    This work was supported by NRF 2021-R1A2C1003810.}
\and
\normalsize
    Makoto Yamashita\footnotemark[2]
    %\footnotetext{({\tt Makoto.Yamashita@c.titech.ac.jp}),
    %This research was partially supported by JSPS KAKENHI (Grant number: 20H04145).}
    }
\begin{document}
\maketitle

%\listoftodos

\begin{abstract} 
 \noindent
%The vulnerability  of neural networks (NNs)  to input uncertainties and adversarial attacks poses a significant challenge for applications 
%where accuracy is critical.
For verifying the safety of neural networks (NNs), Fazlyab et al.~(2019) introduced 
a semidefinite programming (SDP) 
approach called DeepSDP.  This formulation can be viewed as the dual of the SDP relaxation for a problem formulated as a quadratically constrained quadratic program (QCQP).
While SDP relaxations of QCQPs generally provide approximate solutions with some gaps,
this work focuses on tight SDP relaxations that provide exact solutions to the QCQP for single-layer NNs.
Specifically, we  analyze  tightness conditions in three cases:
(i) NNs with a single neuron, (ii) single-layer NNs with an ellipsoidal input set, and (iii) single-layer NNs with a rectangular input set.
For  NNs with a single neuron,  we propose a condition that ensures the SDP admits
a rank-1 solution to DeepSDP by transforming the QCQP
into an equivalent two-stage problem leads to a solution collinear with a predetermined vector.
For single-layer NNs with an ellipsoidal input set, the collinearity of solutions
is proved via the Karush-Kuhn-Tucker condition in the two-stage problem.
In case of single-layer NNs with a rectangular input set, we demonstrate that
 the tightness of DeepSDP can be reduced to the single-neuron NNs, case (i), if the weight matrix is a diagonal matrix.

\end{abstract}

\vspace{0.5cm}

\noindent
{\bf Key words. } Neural network, Safety verification, Tight semidefinite relaxations, 
Rank-1 solutions,  Decomposition into two-stage problem.

\vspace{0.5cm}

\noindent
{\bf MSC Classification. }
62M45,      %Neural nets and related approaches
90C20,  	%Quadratic programming
90C22,  	%Semidefinite programming
90C25, 	%Convex programming
90C26.  	%Nonconvex programming, global optimization

%\input sect1.tex
%!TEX root = ./main.tex
\section{Introduction} \label{sec:introduction}

%To address this issue, 
%The LP relaxation of piece-wise linear networks  \cite{wong2018provable} was proposed
%for upper bounds on the worst-case loss using weak duality. 
%A semidefinite programming (SDP) framework was also proposed in \cite{Fazlyab2022}, 
%which presented a quadratic problem for the properties of activation functions. By solving the
%SDP relaxation problems of quadratic problems usually  provide upper bounds for the quadratic problem, unless
%they satisfy certain conditions for the exactness. 
%
%We present an SDP verifier for  an $L$-layer neural network (NN) by showing the SDP relaxation of the problem
%is tight. 
% An important criterion to evaluate the performance of neural network is  the tightness of the certified bounds.

Neural networks (NNs) have gained significant attention
over the past two decades due to their theoretical foundations and practical applications in various fields, including
% due to its theoretical and practical possibility in recent two decades
%and many researchers have intensively studied 
%its applications      in broad areas of applications such as 
 image processing~\cite{egmont2002image}, sound recognition~\cite{deng2013new}, and natural language processing~\cite{goldberg2016primer,goldberg2017neural}.
Ensuring the robustness and safety of NNs against 
adversarial perturbations to test inputs
 is a key challenge in safe machine learning. 
% This requires establishing precise bounds on the network’s output as the input varies within a defined range.
To address this issue, we present a semidefinite programming (SDP)
 verifier for a neural network by showing  that the SDP relaxation % of the problem
%is tight under moderate assumption.
provides exact solutions under suitable assumptions.

Consider an $L$-layer feed-forward NN~\cite{goldberg2017neural} described as a function $f: \Real^{n_0} \to \Real^{n_{L+1}}$
of form:
\begin{equation} \label{eq:neural_network}
	\left.
	\begin{aligned}
		\x^{k+1} &\coloneqq \phi\left(W^k \x^k + \b^k\right), \quad k = 0, \ldots, L - 1, \\
		f(\x^0)  &\coloneqq W^L \x^L + \b^L,
	\end{aligned}\quad\right\}
\end{equation}
where $\x^0 \in \Real^{n_0}$ is the input vector,
    $\x^k \in \Real^{n_k} \ (k=1,\dots,L)$ corresponds to the neurons in the $k$th layer.
    The matrix $W^k \in \Real^{n_{k+1} \times n_{k}}$
    and the vector $\b^k \in \Real^{n_{k+1}}$ are referred to as the weight matrix and bias vector at the $k$th layer, respectively.
The function $\phi(\x)$ is a vector-valued function 
where each element is an activation function.
In this paper, we are interested in the
rectified linear unit (ReLU) activation function,
that is $\phi(\x)_i = \max\{\alpha x_i, \beta x_i\}$ with $\alpha < \beta$ for every $i \in \Natural$.
The ReLU function can be emulated with three quadratic constraints:
\begin{align}
    (\phi(\x)_i - \alpha x_i)(\phi(\x)_i - \beta x_i) = 0, 
    \ \phi(\x)_i \ge \alpha x_i,
    \ \phi(\x)_i \ge \beta x_i. 
    \label{eq:QC}
\end{align}
As a result, the NN in \eqref{eq:neural_network}
can be formulated as a quadratically-constrained quadratic programs (QCQP) with an appropriate objective function.

It is well known that small input perturbations $\varepsilon$ in NNs can lead to significant changes in the output  $f(\x^0 + \varepsilon)$ \cite{moosavi2017universal,su2019one},
which makes NNs vulnerable to adversarial inputs that produce incorrect results.
%As a slight perturbation of the input $\x^0$ in NNs can significantly affect the output $f(\x^0)$ \cite{moosavi2017universal,su2019one},
%This property can be used to 
%making it possible to generate adversarial inputs that deliberately induce make incorrect outputs.
This vulnerability undermines safety-critical applications including self-driving cars.
To model the vulnerability behavior of NN~\eqref{eq:neural_network}, 
optimization methods such as QCQPs have been proposed with
    %to encode the behavior of NN~\eqref{eq:neural_network}
 expressing the ReLU function  as quadratic inequalities.
    %mathematical optimization and quadratically constrained quadratic programming (QCQP)
    %can be used to encode the behavior of NN~\eqref{eq:neural_network}.
    %are related to many studies.
Specifically,
    mixed-integer linear programming programs have been incorporated
    into the algorithms in \cite{bastani2016measuring,dutta2018output,lomuscio2017approach,tjeng2017evaluating}
    and the duality of the convex relaxation has also been used in \cite{wong2018provable,dvijotham2018dual,salman2019convex}.

 % with rank constraints.
The SDP relaxation has been extensively studied as a powerful tool for generating approximate solutions to QCQPs
%The SDP relaxation has been studied well 
%as a powerful tool to generate approximate solutions for QCQPs
\cite{Gartner2014,luo2009sdp}.
To address the robustness of NNs,
Zhang~\cite{Zhang2020} proposed a tight semidefinite program (SDP) relaxation
by analyzing the \textit{collinearity} of solutions of the SDP relaxation %the author identified 
and presenting a condition %was presented in~\cite{Zhang2020}  
under which the optimal value of the SDP relaxation is equivalent to that of the QCQP formulated for
a single-layer NN.
%for QCQPs such that the optimal value of SDP relaxation
%is equivalent to that of single-layer NN formulated as QCQPs.
Based on this result,
the approach in \cite{Zhang2020} was able to generate adversarial inputs 
with high accuracy.

% However, 
% since QCQPs are NP-hard in general and SDPs are solvable by a polynomial-time algorithm, 
% there can be a gap between the solution of SDP relaxation
% and that of QCQPs.
% Zhang~\cite{Zhang2020} analyzed conditions of QCQPs under which
% the SDP relaxation is tight, that is, the SDP relaxation
% can give the exact optimal value of the corresponding QCQP.
% The approach in~\cite{Zhang2020}  first transformed an NN  into a two-stage problem, consisting of projection problems,
% and then demonstrated that the projection in each projection problem was always \textit{collinear} with a fixed vector. % $\e$ such that $\|\e\| = 1$.
% Analyzing the collinearity of the solution,
% the approach in \cite{Zhang2020} provided adversarial inputs 
% with high accuracy.

Another approach that employs SDP relaxations to understand the vulnerability behavior of NNs is 
DeepSDP, which was recently proposed by
Fazlyab et al.~\cite{Fazlyab2019,Fazlyab2022}.
DeepSDP has been developed in the context of 
safety verification~\cite{huang2017safety,huang2023verification}.
To evaluate the safety of NNs,
safety verification determines whether the output set 
$f(\XC) := \{f(\x^0) \mid \x^0 \in \XC \}$, where 
$\XC \subset \Real^{n_0}$ is a given input set,
remains within a 
predefined safety specification set $S_y \subset \Real^{n_{L+1}}$.
 %, called a safe set, 
Fazylab et al.~\cite{Fazlyab2019, Fazlyab2022}
employed the S-lemma to formulate DeepSDP as the following SDP problem:
\begin{equation} \label{eq:introduction_deepsdp}
    \begin{array}{rl}
        \min\limits_{P,Q,S} & g(P,Q,S) \\
        \subto           & M_\mathrm{in}(P) + M_\mathrm{mid}(Q) + M_\mathrm{out}(S) \succeq O, \\
                         & P \in \PC_\XC,\; Q \in \QC_\phi,\; S \in \SC,
    \end{array}
\end{equation}
where $g$ is a convex function, $\PC_\XC$, $\QC_\phi$, and $\SC$ are matrix sets representing the domain space of variables.
% The three functions, $M_\mathrm{in}$, $M_\mathrm{mid}$, $M_\mathrm{out}$, map one matrix to another,
% Note that the above formulation is equivalent to (36) in \cite{Fazlyab2022};
%    however, the signs of the functions $M_\mathrm{in}$, $M_\mathrm{mid}$, $M_\mathrm{out}$ are reversed.
The set $\PC_\XC$ describes the information of the input set $\XC$,
    $\QC_\phi$ emulates the ReLU activation function $\phi$, and $\SC$ specifies a safety specification
    set $S_y \subseteq \Real^{n_{L+1}}$,
    where $\Real^{n_{L+1}}$ denotes the $n_{L+1}$-dimensional Euclidean space of column vectors $\y = \trans{\left[y_1,\ldots, y_{n_{L+1}}\right]}$.
    The three functions $M_\mathrm{in}$, $M_\mathrm{mid}$ and $M_\mathrm{out}$ lift up the three variable matrices $P, Q, S$ to an appropriate dimension,
    and $X \succeq O$ denotes that $X$ is positive semidefinite.    
Fazlyab et al.~\cite{Fazlyab2019,Fazlyab2022} showed that 
if 
$(P,Q,S)$ is a feasible solution \eqref{eq:introduction_deepsdp},
DeepSDP provides an safety specification set $S_y \subset \Real^{n_L + 1}$ that encompasses the output set $f(\XC)$
(see Theorem~\ref{thm:fazlyab_theorem2} in Section~\ref{ssec:formulation_deepsdp} for details).

%-------------------------------------------------------------------------------------------------
DeepSDP is less accurate than other safety verification methods as shown in~\cite{newton2021neural},
since \eqref{eq:introduction_deepsdp} is not always tight 
for the corresponding QCQP.
In this paper, we investigate the tightness of DeepSDP \eqref{eq:introduction_deepsdp} as the SDP relaxation 
for the QCQP, using the collinearity.
It should be noted that the approach in \citet{Zhang2020}
is inapplicable to DeepSDP~\eqref{eq:neural_network},
since DeepSDP~\eqref{eq:neural_network}
includes the information of 
the input set $\XC$ and the safety verification set $S_y$.
Our approach is to show the collinearity utilizing
% an alternative approach based on
the second projection theorem~\cite[Theorem~9.8]{Beck2014} and the Karush-Kuhn-Tucker conditions.

The main contribution of this paper is to provide  tightness conditions for DeepSDP \eqref{eq:introduction_deepsdp} in
(i)   single-neuron NNs, i.e., $L = 1$ and $n_0 = n_1 = n_2 = 1$;
(ii)  single-layer NNs with ellipsoidal inputs $\XC$ and polytope safety specification set $S_y$; and
(iii) single-layer NNs with rectangular inputs $\XC$ and polytope safety specification set $S_y$.
%
%These conditions enable us to 
%We can analytically determine 
Whether  DeepSDP can provide a robust or exact solution can be analytically determined by examining the conditions.
For (ii) and (iii),
we consider the cases where $\XC$ is an ellipsoid and a hyper-rectangle
as the neighborhood $\left\{ \x \in \Real^{n_0} \,\middle|\, \left\| \x - \hat{\x} \right\| \leq \varepsilon\right\}$
    around the true input $\hat{\x}$ is commonly used for the input set $\XC$ to evaluate  uncertainty,
    where $\|\cdot\|$ is the $\ell_1$-norm or  $\ell_2$-norm, and $\varepsilon \in \Real$ is a given value.
 % also studied the tightness of the SDP relaxation in the field of NN as mentioned above;
    The problem in~\cite{Zhang2020}, while  focusing on the tightness of the SDP relaxation in single-layer NNs,
     was not related to safety verification.  
In this paper, we present %for the first time to the best of the authors' knowledge,
 tightness  conditions for the SDP relaxation applied to safety verification, particularly, in the estimation of the minimum safe
 specification set.

This paper is organized as follows.
In Section~\ref{sec:preliminary}, we describe SDP relaxations 
of QCQPs and the collinearlity-based tightness condition in \citet{Zhang2020}.
Section~\ref{sec:tightness} gives a concrete formulation of DeepSDP and discusses its tightness as the SDP relaxation.
Sections~\ref{sec:oneneuron} and \ref{sec:deepsdp_onelayer}  include the main results of this paper.
In Section~\ref{sec:oneneuron}, we provide a tightness condition  for  DeepSDP \eqref{eq:introduction_deepsdp} in case of a single-neuron NN.
In Section~\ref{sec:deepsdp_onelayer}, %by extending the results of Section 4 to one-layer NNs,
    we derive tightness conditions for DeepSDP \eqref{eq:introduction_deepsdp}  with respect to two different shapes of input sets.
We finally conclude in Section~\ref{sec:conclusion}.

%\input sect2.tex
%!TEX root = ./main.tex
\section{Preliminaries} \label{sec:preliminary}

\subsection{Notation}

The symbol $\Real_+^n$ denotes the set of nonnegative vectors in $\Real^n$.
%We denote
Let $\0 \in \Real^n$ and $\1 \in \Real^n$ be the zero vector and the vector of all ones. % as $\0 \in \Real^n$ and $\1 \in \Real^n$.
%In addition, 
We also let $\e^i \in \Real^n$ be the $i$th unit vector of an appropriate length,
    {\it i.e.,} %that is, 
    $\left(\e^i\right)_i = 1$ and $\left(\e^i\right)_k = 0$ for all $k \neq i$.
We denote by $\SymMat^n$ the set of the $n \times n$ symmetric matrices.
For any symmetric matrices $A, B \in \SymMat^n$,
    $\ip{A}{B}$ denote the Frobenius inner product of $A$ and $B$ defined as % by
$
    \ip{A}{B} \coloneqq \trace(AB) = \sum_{i=1}^n \sum_{j=1}^n A_{ij} B_{ij}.
$
For a subset $S \subset \Omega$, $\1_{S}(\cdot)$ denotes the indicator function of $S$,
     {\it i.e.,}  $\1_{S}(x) = 1$ if $x \in S$; $\1_{S}(x) = 0$ otherwise.
A matrix $\textrm{diag}(\x)$ denotes a diagonal matrix whose diagonal elements are $\x$.

Let $N \coloneqq \sum_{k=1}^L n_k$ be the total number of neurons in the entire  network.
%When
If $\XC \subseteq \Real^{n_0}$ is a given input set of NN \eqref{eq:neural_network},
the output of $f$ on $\XC$ is $\YC \coloneqq \left\{ f(\x^0) \in \Real^{n_{L+1}} \,\middle|\, \x^0 \in \XC \right\}$.
Moreover, for the input set $\XC$ in \eqref{eq:neural_network}, 
    the sets $\XC_0, \ldots, \XC_L$ are defined by
    $\XC_0 \coloneqq \XC$ and $\XC_{k+1} \coloneqq \left\{ \phi(W^k \x^k + \b^k) \,\middle|\, \x^k \in \XC_{k} \right\}$.
%
% Let $\E^i \in \Real^{n_i \times (n_0 + \cdots + n_L)}$ be a matrix such that $\E^i \left[\x^0; \dots; \x^L\right] = \x^i$,
%  {\it i.e.,}  $\E^i$ extracts $\x^i$ from $\left[\x^0; \dots; \x^L\right]$.

\subsection{Tight SDP relaxations of QCQPs and  strong duality} \label{ssec:sdp_relaxation}

%In this subsection, 
We consider an inequality standard form QCQP: %of the inequality standard form
\begin{equation} \label{eq:general_qcqp}
    \begin{array}{rl}
        \min\limits_{\x} & \trans{\x}Q^0\x + 2\trans{(\q^0)}\x \\
        \subto           & \trans{\x}Q^k\x + 2\trans{(\q^k)}\x \leq b_k, \quad k = 1,\ldots,m, \\
                         & \x \in \Real^n.
    \end{array}
\end{equation}
As QCQPs are  generally nonconvex, 
  a well-known approach for obtaining a good approximate optimal value  is to use SDP relaxations.
The SDP relaxation of \eqref{eq:general_qcqp} can be expressed  as: % the following SDP:
\begin{equation} \label{eq:general_sdp_primal}
    \begin{array}{rl}
        \min\limits_{\x, X} & \ip{Q^0}{X} + 2\trans{(\q^0)}\x \\
        \subto
                            & \ip{Q^k}{X} + 2\trans{(\q^k)}\x \leq b_k, \quad k = 1,\ldots,m, \\
                            & \begin{bmatrix} 1 & \trans{\x} \\ \x & X \end{bmatrix} \succeq O,
    \end{array}
\end{equation}
where $X$ is a $n \times n$ symmetric matrix.
The optimal value $\theta$ of \eqref{eq:general_qcqp} is generally bounded by
    the optimal value $\zeta$ of \eqref{eq:general_sdp_primal} from below, {\it i.e.}, $\theta \geq \zeta$.
If $\theta = \zeta$, we say that the SDP relaxation~\eqref{eq:general_sdp_primal} is tight (or equivalently, exact).
It is well known that \eqref{eq:general_sdp_primal} is tight if and only if \eqref{eq:general_sdp_primal} has a rank-1 solution.
A great deal of studies has been conducted for the tight SDP relaxation as
    the exact optimal value and/or solution of \eqref{eq:general_qcqp} can be computed in polynomial time with the SDP relaxation
    ~\cite{Argue2020necessary,Azuma2021,Azuma2022,Burer2019,Kilinckarzan2021exactness,kim2003exact,Sojoudi2014exactness,Wang2021tightness}.

The dual problem of \eqref{eq:general_sdp_primal} is
\begin{equation} \label{eq:general_sdp_dual}
    \begin{array}{rl}
        \max\limits_{\boldsymbol{\xi}, \psi} & -\trans{\b}\boldsymbol{\xi} + \psi \\
        \subto
            & \begin{bmatrix} -\psi & \trans{(\q^0)} \\ \q^0 & Q^0 \end{bmatrix} 
                + \sum\limits_{k = 1}^m \xi_k \begin{bmatrix} 0 & \trans{(\q^k)} \\ \q^k & Q^k \end{bmatrix} \succeq O, \\
            & \boldsymbol{\xi} \geq \0.
    \end{array}
\end{equation}
The optimal values of SDP~\eqref{eq:general_sdp_primal} and its dual problem~\eqref{eq:general_sdp_dual} generally do not coincide.
%When strong duality holds between \eqref{eq:general_sdp_primal} and \eqref{eq:general_sdp_dual},
%    %it is well-known that 
%    these values coincide.
%    Furthermore, the optimal solutions \magenta{$(\x^*, X^*, \boldsymbol{\xi}^*, \psi^*)$} satisfy the complementary slackness condition
%$$
%    \begin{bmatrix} 1 & \trans{\left(\x^*\right)} \\ \x^* & X^* \end{bmatrix} 
%    \left\{\begin{bmatrix} \psi^* & \trans{(\q^0)} \\ \q^0 & Q^0 \end{bmatrix}
%        + \sum\limits_{k = 1}^m \magenta{\xi_k^*} \begin{bmatrix} 0 & \trans{(\q^k)} \\ \q^k & Q^k \end{bmatrix}\right\} = O.
%$$    
%
The following lemma is used in
\cite{Azuma2022} for strong duality, and it is based on the  sufficient conditions in \cite[Corollary~4.3]{kim2021strong}.
\begin{lemma}{\upshape \cite[Lemma~3.3]{Azuma2022}} \label{lem:strong_duality_assumption}
    If the pair of \eqref{eq:general_sdp_primal} and \eqref{eq:general_sdp_dual} satisfies both conditions:
    \begin{enumerate}[label=(\roman*)]
        \item \label{assum:strong-duality-assumption-1}
            both \eqref{eq:general_sdp_primal} and \eqref{eq:general_sdp_dual} have optimal solutions; and
        \item \label{assum:strong-duality-assumption-2}
            the set of optimal solutions for \eqref{eq:general_sdp_dual} is bounded,
            %\eqref{eq:general_sdp_dual} has an interior feasible point $\bar{\y} \in \Real^m$ satifying $\bar{\y} \geq \0$.
    \end{enumerate}
    then strong duality holds between \eqref{eq:general_sdp_primal} and \eqref{eq:general_sdp_dual},
    {\it i.e.}, they have optimal solutions and their optimal values are finite and equal.
\end{lemma}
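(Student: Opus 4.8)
The plan is to prove the lemma via the perturbation (value--function) form of conic duality, with the boundedness hypothesis~\ref{assum:strong-duality-assumption-2} entering as an interior--point property of the primal value function. First I would record weak duality explicitly, as it makes the two hypotheses directly comparable. For any feasible $(\x,X)$ of \eqref{eq:general_sdp_primal} set $Y \coloneqq \begin{bmatrix} 1 & \trans{\x} \\ \x & X \end{bmatrix}\succeq O$, and for any feasible $(\boldsymbol{\xi},\psi)$ of \eqref{eq:general_sdp_dual} let $Z$ be the left--hand side of the semidefinite constraint in \eqref{eq:general_sdp_dual}, so that $Z \succeq O$. Then $\ip{Y}{Z}\ge 0$, and expanding the block inner product gives
\begin{equation*}
0 \le \ip{Y}{Z} = \bigl(\ip{Q^0}{X} + 2\trans{(\q^0)}\x\bigr) - \psi + \sum_{k=1}^m \xi_k\bigl(\ip{Q^k}{X} + 2\trans{(\q^k)}\x\bigr).
\end{equation*}
Invoking $\xi_k \ge 0$ and the primal inequalities $\ip{Q^k}{X}+2\trans{(\q^k)}\x \le b_k$ then yields $\ip{Q^0}{X}+2\trans{(\q^0)}\x \ge -\trans{\b}\boldsymbol{\xi}+\psi$, i.e. the primal objective dominates the dual objective at every feasible pair; in particular $\zeta \ge \zeta_D$, where $\zeta_D$ denotes the optimal value of \eqref{eq:general_sdp_dual}.

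Next I would introduce the value function obtained by perturbing the affine data that carry the dual multipliers: the $m$ inequality right--hand sides by $\u\in\Real^m$ and the normalization of the top--left entry by $t\in\Real$, while keeping the cone constraint. With $Y = \begin{bmatrix} y_{11} & \trans{\x} \\ \x & X \end{bmatrix}$, define
\begin{equation*}
v(\u,t) \coloneqq \inf\bigl\{ \ip{Q^0}{X}+2\trans{(\q^0)}\x \;\big|\; \ip{Q^k}{X}+2\trans{(\q^k)}\x \le b_k + u_k,\ y_{11}=1+t,\ Y \succeq O \bigr\}.
\end{equation*}
Since its constraint set is jointly convex in $(\x,X,\u,t)$ and the objective is linear, $v$ is convex; moreover weak duality against a single \emph{fixed} dual--feasible pair $(\boldsymbol{\xi}_0,\psi_0)$ --- whose feasibility in \eqref{eq:general_sdp_dual} does not depend on $(\u,t)$ --- minorizes $v$ by an affine function, so $v$ is proper. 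Standard Lagrangian/conjugate duality then identifies $\zeta_D$ with $v^{**}(\0,0)$ and, up to the sign convention, the optimal solution set of \eqref{eq:general_sdp_dual} with the subdifferential $\partial v^{**}(\0,0)$.

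It remains to feed in the hypotheses. By~\ref{assum:strong-duality-assumption-1} the primal optimum is attained, so $v(\0,0)=\zeta$ is finite, and the dual optimum is attained, so $\partial v^{**}(\0,0)\neq\emptyset$; by~\ref{assum:strong-duality-assumption-2} this subdifferential is bounded. The standard characterization of nonempty bounded subdifferentials then places $(\0,0)\in\interior(\operatorname{dom} v^{**})$. Because $v$ is convex, $\operatorname{dom} v$ and $\operatorname{dom} v^{**}=\operatorname{dom}(\closure v)$ share the same closure and hence the same interior, so $(\0,0)\in\interior(\operatorname{dom} v)\subseteq\operatorname{ri}(\operatorname{dom} v)$; a convex function agrees with its closure throughout the relative interior of its domain, whence $\zeta=v(\0,0)=v^{**}(\0,0)=\zeta_D$. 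This is precisely the absence of a duality gap, and combined with~\ref{assum:strong-duality-assumption-1} it gives the finite, equal, attained optimal values claimed.

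The routine part is weak duality; I expect the substantive obstacle to be the convex--analytic core, namely (a) the identification of the bounded dual optimal set with $\partial v^{**}(\0,0)$, and (b) the step ``bounded subdifferential $\Rightarrow$ $(\0,0)$ interior to $\operatorname{dom} v \Rightarrow$ no gap.'' The delicate points are verifying that $v$ is proper (handled by the fixed dual--feasible pair, since dual feasibility is insensitive to the perturbation) and matching $\interior(\operatorname{dom} v)$ with $\interior(\operatorname{dom} v^{**})$, which holds because taking the closure of a convex function alters neither the closure nor the interior of its effective domain. This is essentially the content packaged in the cited \cite[Corollary~4.3]{kim2021strong}.
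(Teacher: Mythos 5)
Your argument is correct, but it cannot be compared line-by-line with anything in the paper: the paper supplies no proof of this lemma at all, importing it verbatim as \cite[Lemma~3.3]{Azuma2022}, which in turn rests on \cite[Corollary~4.3]{kim2021strong} (a strong-duality criterion for conic programs with a single hyperplane constraint, established there by more specialized conic arguments). What you have written is a self-contained derivation by the classical perturbation-function route: weak duality (your block computation is right, and the same computation against a fixed dual-feasible pair correctly shows $v$ is proper), the identification of the dual optimal set with $\partial v^{**}(\0,0)$, the equivalence ``$\partial f$ nonempty and bounded at a point $\Leftrightarrow$ the point lies in $\interior(\operatorname{dom} f)$'' for proper convex $f$ on $\Real^n$, the coincidence of $\operatorname{ri}(\operatorname{dom} v)$ with $\operatorname{ri}(\operatorname{dom} v^{**})$ for nested convex sets with equal closures, and finally $v=\closure v$ on $\operatorname{ri}(\operatorname{dom} v)$. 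Each of these steps is standard and correctly deployed; the only step you assert rather than verify is that the conjugate dual of your chosen perturbation (right-hand sides $b_k+u_k$ and normalization $y_{11}=1+t$) is literally \eqref{eq:general_sdp_dual}, and a direct computation of $v^*$ confirms this, with the sign flip $\s=-\boldsymbol{\xi}$ you anticipate, so boundedness of the optimal set of \eqref{eq:general_sdp_dual} is indeed boundedness of $\partial v^{**}(\0,0)$. The trade-off is the expected one: the paper's citation keeps the exposition short but leaves the mechanism opaque, while your proof makes visible exactly where hypothesis (ii) enters --- it is what upgrades dual attainment to an interior-point property of the value function's domain, which is what kills the duality gap --- at the cost of invoking a few facts from Rockafellar-style convex analysis that the paper never sets up.
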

By Lemma \ref{lem:strong_duality_assumption}, we see that if \eqref{eq:general_sdp_primal} is tight for \eqref{eq:general_sdp_dual}
    and strong duality holds between \eqref{eq:general_sdp_dual} and \eqref{eq:general_sdp_primal},
    then \eqref{eq:general_sdp_dual} is also tight for \eqref{eq:general_sdp_dual}.
In \cref{sec:oneneuron,sec:deepsdp_onelayer},
    we demonstrate that the dual of DeepSDP~\eqref{eq:introduction_deepsdp} is tight under certain assumptions.
    Therefore, verifying strong duality between  DeepSDP \eqref{eq:introduction_deepsdp} and its dual is crucial
    when estimating the safety specification set with DeepSDP~\eqref{eq:introduction_deepsdp}. 
  %  it is important to verify strong duality between  DeepSDP \eqref{eq:introduction_deepsdp} and its dual.

%-----------------------------------------------------------------------------------------------------------------------
\subsection{Existence of a rank-1 solution in rank-constrained SDP} \label{ssec:tight_rank_constrained_SDP}

This section describes a necessary and sufficient condition  in~\citet{Zhang2020}
    %such that
    for ensuring that a given rank-constrained SDP has a rank-1 matrix solution. 
    This condition is related to the tightness of the SDP relaxation, as shown in Lemma~\ref{lem:rank_one_solution_collinear} below. 
Consider the SDP relaxation~\eqref{eq:general_sdp_primal} with an additional rank constraint $\rank(X) \leq p$ of the following form: 
\begin{equation} \label{eq:general_rcsdp}
    \begin{array}{rl}
        \min\limits_{\x, X} & \ip{Q^0}{X} + 2\trans{(\q^0)}\x \\
        \subto
                            & \ip{Q^k}{X} + 2\trans{(\q^k)}\x \leq b_k, \quad k = 1,\ldots,m, \\
                            & G \coloneqq \begin{bmatrix} 1 & \trans{\x} \\ \x  & X \end{bmatrix} \succeq O, \quad \rank(X) \leq p.
    \end{array}
\end{equation}
% where %the variables are
% $\x \in \Real^n$ and $X \in \Real^{n \times n}$.
The problem~\eqref{eq:general_rcsdp} reduces into the SDP relaxation \eqref{eq:general_sdp_primal} if $p = n$.
Obviously, if \eqref{eq:general_rcsdp} has a rank-1 solution for any $p \geq 1$,
    then  \eqref{eq:general_sdp_primal} also admits a rank-1 solution.
Therefore, \eqref{eq:general_sdp_primal} is a tight relaxation of \eqref{eq:general_qcqp}.

The key idea in \cite{Zhang2020} for analyzing a rank-1 feasible solution of \eqref{eq:general_rcsdp} is to
    express  $G$ in terms of the Gram matrix.
Fix an arbitrary vector $\e \in \Real^p$ such that $\|\e\| = 1$.
New variables $\u^1, \ldots, \u^n \in \Real^p$ are introduced to substitute $\x$ and $X$ with %represent $\x$ and $X$ as in \eqref{eq:introduction_replace}
%Hence, $G$ is written as the Gram matrix: 
$$    
    G = \begin{bNiceMatrix}[vlines={2},hlines={2},margin] \trans{\e}\e & \trans{\x} \\ \x & X \end{bNiceMatrix}
      = \begin{bNiceMatrix}[vlines={2},hlines={2},margin]
        \trans{\e}\e & \trans{\e}\u^1 & \Cdots & \trans{\e}\u^n \\
        \trans{\e}\u^1 & \trans{\left(\u^1\right)}\u^1 & \Cdots & \trans{\left(\u^1\right)}\u^n \\
        \Vdots & \Vdots & \Ddots & \Vdots \\
        \trans{\e}\u^n & \trans{\left(\u^n\right)}\u^1 & \cdots & \trans{\left(\u^n\right)}\u^n \\
    \end{bNiceMatrix}.
$$
The rank-1 condition $\rank(G) = 1$ holds if the vectors $\u^1, \ldots, \u^n$ and $\e$ are all collinear.
\begin{definition}\label{def:collinear}
The vectors $\u^1, \ldots, \u^n$ and $\e$ are all collinear
if $\left|\trans{\e}\u^i\right| = \left\|\u^i\right\|$ for all $i \in \{1,\ldots,n\}$.
\end{definition}
By substituting $G$  in  \eqref{eq:general_rcsdp}
    with  the above Gram-matrix representation, \eqref{eq:general_rcsdp}  can be reformulated as a nonconvex optimization problem 
    in variables $\u^1,\ldots,\u^n \in \Real^p$:
\begin{equation} \label{eq:general_nc_interpretation}
    \begin{array}{cl}
        \min\limits_{\u^1,\ldots,\u^n}
               & \sum\limits_{i=1}^n\sum\limits_{j=1}^n Q^0_{ij} \trans{\left(\u^i\right)}\u^j + 2 \sum\limits_{i=1}^n q^0_i \trans{\e}\u^i \\
        \subto & \sum\limits_{i=1}^n\sum\limits_{j=1}^n Q^k_{ij} \trans{\left(\u^i\right)}\u^j + 2 \sum\limits_{i=1}^n q^k_i \trans{\e}\u^i \leq b_k,
                    \quad k = 1,\ldots,m, \\
               & \u^1,\ldots,\u^n \in \Real^p.
    \end{array}
\end{equation}
The following lemma allows us to determine whether
\eqref{eq:general_rcsdp} is a tight relaxation for the original problem.

\begin{lemma} \label{lem:rank_one_solution_collinear}
    Fix $\e \in \Real^p$ with $\|\e\| = 1$.
    Then, the following two conditions are equivalent.
    \begin{enumerate}[label=(\roman*)]
        \item the problem~\eqref{eq:general_rcsdp} has a rank-1 matrix solution.
        \item the problem \eqref{eq:general_nc_interpretation} has an optimal solution $(\u^1)^*, \ldots, (\u^n)^*$ which are collinear with $\e$.
    \end{enumerate}
    When either of (i) and (ii) holds, \eqref{eq:general_rcsdp} is a tight relaxation for \eqref{eq:general_qcqp}.
    In addition, $\x^*$ and $(\x^*, X^*)$ are optimal solutions of \eqref{eq:general_qcqp} and \eqref{eq:general_rcsdp}, respectively,
        where $\x^* \coloneqq \trans{\begin{bmatrix} \trans{\e}(\u^1)^* & \cdots & \trans{\e}(\u^n)^* \end{bmatrix}}$ and $X^* \coloneqq \x^*\trans{\left(\x^*\right)}$.
 \end{lemma}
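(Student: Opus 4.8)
The plan is to prove the equivalence of (i) and (ii) by combining two ingredients: a value-preserving correspondence between the feasible points of \eqref{eq:general_rcsdp} and \eqref{eq:general_nc_interpretation}, and the elementary fact that, in the Gram-matrix representation, $\rank(G)=1$ is equivalent to the collinearity of $\u^1,\ldots,\u^n$ with $\e$ (Definition~\ref{def:collinear}). The latter I would dispatch first by Cauchy--Schwarz: since $\|\e\|=1$, the equality $|\trans\e\u^i|=\|\u^i\|$ forces $\u^i=\lambda_i\e$ for some $\lambda_i\in\Real$, so that $[\e\mid\u^1\mid\cdots\mid\u^n]$ has rank one and hence $G=\g\trans\g$ with $\g=\trans{[1,\lambda_1,\ldots,\lambda_n]}$; conversely a rank-one $G$ with $G_{11}=1$ is of exactly this form. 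One direction of the correspondence is then immediate: given feasible $\u^1,\ldots,\u^n$ of \eqref{eq:general_nc_interpretation}, the Gram matrix of $[\e\mid\u^1\mid\cdots\mid\u^n]$ satisfies $G\succeq O$, $G_{11}=\trans\e\e=1$, $\x_i=\trans\e\u^i$, and $X=\trans UU$ for $U=[\u^1\mid\cdots\mid\u^n]$, whence $\rank(X)\le p$ and the constraints and objectives of the two problems match. In particular the optimal value of \eqref{eq:general_nc_interpretation} is at least that of \eqref{eq:general_rcsdp}.

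With these in hand, (i)$\Rightarrow$(ii) is short. If $G^*$ is a rank-one optimal solution of \eqref{eq:general_rcsdp}, then $G^*=\begin{bmatrix}1&\trans{(\x^*)}\\ \x^*&X^*\end{bmatrix}$ with $X^*=\x^*\trans{(\x^*)}$, and setting $(\u^i)^*=x_i^*\e$ gives vectors collinear with $\e$ that are feasible for \eqref{eq:general_nc_interpretation} and attain the optimal value of \eqref{eq:general_rcsdp}; since that value already lower-bounds the optimal value of \eqref{eq:general_nc_interpretation} by the previous paragraph, the collinear $(\u^i)^*$ are in fact optimal, giving (ii).

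The reverse implication (ii)$\Rightarrow$(i) is where the main work lies, because it requires the opposite inequality between the two optimal values, i.e. that \eqref{eq:general_rcsdp} attains its optimum at a Gram-representable point. Concretely, given a collinear optimal $(\u^i)^*$ of \eqref{eq:general_nc_interpretation}, the extracted $\x^*$ with $x_i^*=\trans\e(\u^i)^*$ produces the rank-one feasible point $G^*$ of \eqref{eq:general_rcsdp}, and I must argue it is optimal. For this I would show that every feasible $G$ of \eqref{eq:general_rcsdp} can be rewritten in Gram form using the \emph{prescribed} unit vector $\e$: factor $X=\trans UU$ with $U\in\Real^{p\times n}$, and seek $R\in O(p)$ with $\trans\e(R\u^i)=\x_i$, i.e. a unit-norm solution $\mathbf w=\trans R\e$ of $\trans U\mathbf w=\x$. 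Solvability uses $\x\in\range(X)$, and the existence of a \emph{unit} solution uses the Schur-complement inequality $X-\x\trans\x\succeq O$ (equivalently $\trans\x X^+\x\le 1$) coming from $G\succeq O$. The hard part is precisely this representability step: a unit-norm $\mathbf w$ is available except in the boundary situation $\rank(X)=p$ with $\rank(G)=p+1$, and controlling (or ruling out) that case at an optimal $G$ is the crux of the argument, and the point at which the precise reading of the rank constraint---on the full matrix $G$ rather than on $X$ alone---matters.

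Finally, the tightness and optimality assertions follow from the rank-one structure alone, independently of the reformulation. Evaluating the constraints and objective of \eqref{eq:general_rcsdp} at the rank-one $G^*$ shows that $\x^*$ is feasible for \eqref{eq:general_qcqp} with equal objective value; conversely every feasible $\x$ of \eqref{eq:general_qcqp} lifts to the rank-one feasible point $\begin{bmatrix}1&\trans\x\\ \x&\x\trans\x\end{bmatrix}$ of \eqref{eq:general_rcsdp}, so the optimal value of \eqref{eq:general_rcsdp} is at most that of \eqref{eq:general_qcqp}. These two inequalities force the optimal values to coincide, establishing tightness and showing that $\x^*$ and $(\x^*,X^*)$ are optimal for \eqref{eq:general_qcqp} and \eqref{eq:general_rcsdp}, respectively.
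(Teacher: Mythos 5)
Your proposal is necessarily a different route from the paper's, because the paper does not prove the equivalence at all: it cites \cite[Theorem~A.2]{Zhang2020} for (i)$\Leftrightarrow$(ii) and then invokes the standard fact from \cref{ssec:sdp_relaxation} that a rank-1 solution implies tightness. Within your self-contained attempt, several pieces are correct and complete: the Cauchy--Schwarz characterization of collinearity as $\u^i=\lambda_i\e$, the value-preserving map from feasible points of \eqref{eq:general_nc_interpretation} to feasible points of \eqref{eq:general_rcsdp} (giving $\mathrm{opt}\eqref{eq:general_nc_interpretation}\geq\mathrm{opt}\eqref{eq:general_rcsdp}$), the direction (i)$\Rightarrow$(ii), and the final tightness/optimality assertions about $\x^*$ and $(\x^*,X^*)$.

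The gap is in (ii)$\Rightarrow$(i), and you have named it yourself without closing it. To conclude that the rank-1 point built from a collinear optimizer of \eqref{eq:general_nc_interpretation} is \emph{optimal} for \eqref{eq:general_rcsdp}, you need the reverse inequality $\mathrm{opt}\eqref{eq:general_rcsdp}\geq\mathrm{opt}\eqref{eq:general_nc_interpretation}$, i.e., every feasible $G$ of \eqref{eq:general_rcsdp} must be (dominated by) a Gram matrix of vectors in $\Real^p$. You correctly reduce this to finding a unit-norm $\w$ with $\trans{U}\w=\x$ where $X=\trans{U}U$, and correctly observe that such a $\w$ exists unless $\rank(X)=p$ and $\trans{\x}X^{+}\x<1$, which is exactly the case $\rank(G)=p+1$. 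The constraint in \eqref{eq:general_rcsdp} as written is $\rank(X)\leq p$, which does \emph{not} exclude $\rank(G)=p+1$, so this boundary case is genuinely present in the feasible set and your argument does not handle it --- declaring it ``the crux'' is an honest flag, but it leaves the implication unproved. This is precisely the content that the cited Theorem~A.2 of \cite{Zhang2020} is being used to supply. It is worth noting that in every application in the paper (e.g.\ \eqref{eq:primal_deepsdp_oneneuron} with $p=3$ and $G\in\SymMat^3$) one has $p=n+1$, so every feasible PSD $G$ is automatically the Gram matrix of $n+1$ vectors in $\Real^p$ and your argument would close there; but as a proof of the lemma for general $p$, the representability step is missing.
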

\begin{proof}
    The equivalence between (i) and (ii) follows from \cite[Theorem~A.2]{Zhang2020}.
    %    by removing the uniqueness from the left hand side and the universal quantification from the right hand side.
    Since \eqref{eq:general_rcsdp} has a rank-1 solution,
        it is a tight relaxation for \eqref{eq:general_qcqp}, as mentioned in \cref{ssec:sdp_relaxation}.
        %by the  necessary and sufficient conditions mentioned in \cref{sec:introduction}. \todo{Sorry, I deleted the condition in Section 1.}
\end{proof}

The direction $\e \in \Real^p$ can be fixed for the discussion here, as described in Appendix~A of \cite{Zhang2020}.
For instance, let us consider a case that,
    after solving \eqref{eq:general_nc_interpretation} with $\e = \bar{\e}$,
    we wish to have a rank-1 solution of \eqref{eq:general_nc_interpretation} with $\e = \hat{\e}$.
We can find an orthonormal matrix $U \in \Real^{p \times p}$ such that $\hat{\e} = U\bar{\e}$ by the Gram-Schmidt process. 
Let $\bar{\u}^1,\ldots,\bar{\u}^n$ be an optimal solution of \eqref{eq:general_nc_interpretation} with $\e = \bar{\e}$.
Then, for all pair $(i,j) \in \left\{1,\ldots,n\right\}^2$,
    we have $\trans{\left(\bar{\u}^i\right)}\bar{\u}^j = \trans{\left(U\bar{\u}^i\right)}U\bar{\u}^j$
    and $\trans{\bar{\e}}\bar{\u}^j = \trans{\left(U\bar{\e}\right)}U\bar{\u}^j = \trans{\hat{\e}}U\bar{\u}^j$.
By taking $\hat{\u}^i \coloneqq U\bar{\u}^i$ for all $j \in \left\{1,\ldots,n\right\}$,
    an optimal solution $(\hat{\u}^1,\ldots,\hat{\u}^n)$ of \eqref{eq:general_nc_interpretation} with $\e = \bar{\e}$ is obtained.
Therefore, 
    we may assume that $\e = \e^1$ in the proofs without loss of generality throughout the paper.

%\input sect3.tex
%!TEX root = ./main.tex
\section{Tight  SDPs for  NNs} \label{sec:tightness}

In this section, we first describe the safety specification set of DeepSDP~\eqref{eq:introduction_deepsdp}. % via the SDP relaxation.
The connection between the tightness of \eqref{eq:introduction_deepsdp} and its accuracy is presented in \cref{ssec:between_tightness_and_deepsdp}.
Throughout, % this paper, 
 DeepSDP \eqref{eq:introduction_deepsdp} with
the standard ReLU activation function $\phi$ is discussed,
{\it i.e.}, $\phi(\x)_i = \max\{0, x_i\}$ holds. % for any vector $\x$.

\subsection{Safety specification sets} \label{ssec:safety_set}

%In the safety verification of NNs,
To verify the safety of an input set $\XC$, % of interest,
it is important to analyze     the subset relationship between a given set $S_y$ and the image of $\XC$ under $f$, given by
$$
    \YC \coloneqq f(\XC) = \left\{ f(\x^0) \,\middle|\, \x^0 \in \XC \right\}.
$$
%is important.
Let $\XC$ be an input set that either contains  uncertainty or represents an adversarial attack that we wish to evaluate.
We define a safety specification set $S_y \subset \Real^{n_{L+1}}$ such that,
    for every $\x^0 \in \XC$, it represents the  range over which the output value $f(\x^0)$ can be correctly interpreted. %correctly.
%When
If $\YC \subseteq S_y$,
    then for any $\x^0 \in \XC$, the value $f(\x^0)$ is unaffected by uncertainty or adversarial attacks.
    %has no effect of the uncertainty or adversarial attack,
Thus, the inputs $\XC$ for the NN is considered safe.
To represent  candidate  safety specification sets,
DeepSDP~\eqref{eq:introduction_deepsdp} uses 
    the matrix set $\SC \subseteq \SymMat^{1 + n_0 + n_{L+1}}$, 
    where $\SymMat^{1 + n_0 + n_{L+1}}$ is the space of  symmetric matrices of dimension $1 + n_0 + n_{L+1}$.

\subsection{Quadratic constraints  in DeepSDP \texorpdfstring{\eqref{eq:introduction_deepsdp}}{} } \label{ssec:QC_ReLU}

We describe %introduce 
the quadratic constraints that constitute DeepSDP~\eqref{eq:introduction_deepsdp}.
In \cref{ssec:input_set}, we define the matrix set $\PC_\XC$ used in \eqref{eq:introduction_deepsdp}.
Quadratic constraints to encode the ReLU functions $\phi$ are discussed in \cref{sssec:globalqc_relu}.
\cref{sssec:repeated_nonlinearities} formulates valid cuts which strengthen constraints in DeepSDP~\eqref{eq:introduction_deepsdp}.
(See \cite{Fazlyab2022} for details.)

\subsubsection{Quadratic representation of input sets} \label{ssec:input_set}
For the nonempty input set $\XC \subseteq \Real^{n_0}$ of the NN,
$\PC_\XC \subset \SymMat^{n_0+1}$ is defined as the set of all symmetric indefinite matrices $P \in \SymMat^{n_0+1}$ such that
\begin{equation} \label{eq:def_PX}
    \trans{\begin{bmatrix} 1 \\ \x^0 \end{bmatrix}} P \begin{bmatrix} 1 \\ \x^0 \end{bmatrix} \leq 0 \quad \text{for all $\x^0 \in \XC$}.
\end{equation}
Obviously,
\begin{equation} \label{eq:inclusion_Px}
    \XC \subseteq \bigcap_{P \in \PC_\XC} \left\{ \x^0 \in \Real^{n_0} \,\middle|\, \trans{\begin{bmatrix} 1 \\ \x \end{bmatrix}} P \begin{bmatrix} 1 \\ \x \end{bmatrix} \leq 0 \right\},
\end{equation}
and $\PC_\XC$ serves as an over-approximation to the input set $\XC$ using an infinite number of quadratic constraints.
In~\cite{Fazlyab2022}, the set $\PC_\XC$ for common input set $\XC$ was discussed.
In particular,
    for a hyper-rectangle $\XC \coloneqq \left\{\x \in \Real^{n_0}\,\middle|\, \underline{\x} \leq \x \leq \overline{\x}\right\}$,
    where $\underline{\x}\in \Real^{n_0}$ and $\overline{\x} \in \Real^{n_0}$ are the given lower and upper bounds,
    the over-approximation set $\PC_\XC$ can be described by
$$
    \PC_\XC = \left\{
        \begin{bmatrix}
            2\trans{\underline{\x}}\diag(\boldsymbol{\gamma})\overline{\x} & -\trans{(\underline{\x}+\overline{\x})}\diag(\boldsymbol{\gamma}) \\
            - \diag(\boldsymbol{\gamma}) (\underline{\x}+\overline{\x}) & 2\diag(\boldsymbol{\gamma})
        \end{bmatrix} \,\middle|\,
        \boldsymbol{\gamma} \geq \0
        \right\},
$$
and \eqref{eq:inclusion_Px} holds with equality.

%-----------------------------------------------------------------------------------------------------------
\subsubsection{Global constraints} \label{sssec:globalqc_relu}

Let $\varphi: \Real \to \Real$ be the standard ReLU function $\varphi(x) = \max\{0, x\}$. 
We let $\phi(\z) \coloneqq \trans{\begin{bmatrix} \varphi(z_1) & \cdots &  \varphi(z_N) \end{bmatrix}}$ of $\z \in \Real^N$
    since the NN~\eqref{eq:neural_network} invokes $N$ ReLU activation functions $\varphi$ to output $f(\x^0)$.
To simplify the subsequent discussion,
     we define $\w^k \coloneqq W^k\x^k + \b^k \in \Real^{n_{k+1}}$ for every $k = 0, \ldots, L-1$ and $\w \coloneqq \trans{\begin{bmatrix} \trans{(\w^0)} & \cdots & \trans{(\w^{L-1})}\end{bmatrix}} \in \Real^{N}$.

As all  activation functions in the NN~\eqref{eq:neural_network} can be expressed in terms of $\phi(\w)$,
    we now discuss the constraints between $\w$ and $\phi(\w)$.
For all $i = 1,\ldots, N$, the constraint $\varphi(w_i) = \max\{0, w_i\}$ for expressing the standard ReLU activation function
%By applying \eqref{eq:equivalence_relu} to each element of $\phi(\w)$,
    can be equivalently transformed into quadratic constraints % a quadratic equality and two linear inequalities
\begin{equation} \label{eq:primal_global_quadratic_constraint_each}
    \varphi(w_i)\left[\varphi(w_i) - w_i\right] = 0, \quad
%     \left[\varphi(w_i) - \alpha w_i\right]\left[\varphi(w_i) - \beta w_i\right] = 0, \quad
    \varphi(w_i) \geq w_i, \quad \varphi(w_i) \geq 0.%,
\end{equation}
%hold for $i = 1,\ldots, N$.
%
Moreover, \eqref{eq:primal_global_quadratic_constraint_each} can be equivalently rewritten in the matrix form:
\begin{subequations} \label{eq:primal_global_quadratic_constraint}
    \begin{align}
        % lambda
        \trans{\begin{bmatrix} 1 \\ \w \\ \phi(\w) \end{bmatrix}}
        \begin{bmatrix} 0 & \trans{\0} & \trans{\0} \\ \0 & O & -\e_i\trans{\e_i} \\ \0 & -\e_i\trans{\e_i} & 2 \e_i\trans{\e_i} \end{bmatrix}
        \begin{bmatrix} 1 \\ \w \\ \phi(\w) \end{bmatrix} &= 0, \quad i = 1,\ldots,N, \label{eq:primal_global_quadratic_constraint_1} 
       \end{align}
       \begin{align}      
        % nu
        \trans{\begin{bmatrix} 1 \\ \w \\ \phi(\w) \end{bmatrix}}
        \begin{bmatrix} 0 & \trans{\e_i} & -\trans{\e_i} \\ \e_i & O & O  \\ -\e_i & O & O \end{bmatrix}
        \begin{bmatrix} 1 \\ \w \\ \phi(\w) \end{bmatrix} &\leq 0, \quad i = 1,\ldots,N, \label{eq:primal_global_quadratic_constraint_2} \\
        % eta
        \trans{\begin{bmatrix} 1 \\ \w \\ \phi(\w) \end{bmatrix}}
        \begin{bmatrix} 0 & \trans{\0} & -\trans{\e_i} \\ \0 & O & O \\ -\e_i & O & O \\  \end{bmatrix}
        \begin{bmatrix} 1 \\ \w \\ \phi(\w) \end{bmatrix} &\leq 0, \quad i = 1,\ldots,N. \label{eq:primal_global_quadratic_constraint_3}
    \end{align}
\end{subequations}
In \cite{Fazlyab2022}, the constraints~\eqref{eq:primal_global_quadratic_constraint} were employed
and %they 
 referred to as global quadratic constraints.

 We mention that Fazlyab et al.~\cite{Fazlyab2022} also proposed weaker inequalities than \eqref{eq:primal_global_quadratic_constraint}, 
 called local quadratic constraints. 
The local quadratic constraints are not dealt with in this work,
since their descriptions require more assumptions
on the input space $\XC$ and the activation function.

%------------------------------------------------------------------------------------------------------------------------
\subsubsection{Valid cuts} \label{sssec:repeated_nonlinearities}

Let $\w = \trans{\begin{bmatrix} \trans{(\w^0)} & \cdots & \trans{(\w^{L-1})}\end{bmatrix}} \in \Real^N$ be a column vector.
For the standard ReLU activate function $\varphi(x) = \max\{0, x\}$, the inequalities
\begin{equation} \label{eq:sloperestriction_relu_yy_each}
    \left[\varphi(w_j) - \varphi(w_i)\right] \left[\varphi(w_j) - \varphi(w_i) - \left(w_j - w_i\right)\right] \leq 0
\end{equation}
hold for all $i, j$ $(1 \le i < j \le N)$.
These valid cuts that couple between neurons were called as repeated nonlinearities in \cite{Fazlyab2022},
    since the same function $\varphi$ repeatedly appears in the NN for many neurons.
The inequality~\eqref{eq:sloperestriction_relu_yy_each} can be equivalently written as
\begin{equation} \label{eq:sloperestriction_relu_yy}
    \trans{\begin{bmatrix} 1 \\ \w \\ \phi(\w) \end{bmatrix}}
    \begin{bmatrix}
        0  & \trans{\0} & \trans{\0} \\
        \0 & O & -(\e_i - \e_j)\trans{(\e_i - \e_j)} \\
        \0 &-(\e_i - \e_j)\trans{(\e_i - \e_j)} & 2(\e_i - \e_j)\trans{(\e_i - \e_j)} \end{bmatrix}
    \begin{bmatrix} 1 \\ \w \\ \phi(\w) \end{bmatrix} \leq 0,
\end{equation}
for all $i, j$ $(1 \le i < j \le N)$. 

The computation of $f(\x^0)$ of the NN can be accelerated by computing $\varphi$ in parallel.
While increasing the computational efficiency  may be  important, it is secondary  
to ensuring the solution accuracy in this paper. Our primary focus  is whether the constraints can serve as valid cuts to improve the accuracy.
In \cref{sec:oneneuron,sec:deepsdp_onelayer},
    we prove that DeepSDP~\eqref{eq:introduction_deepsdp} is tight under certain conditions without these valid cuts~\eqref{eq:sloperestriction_relu_yy}.
This implies that the tightness can be maintained even when valid cuts are added to \eqref{eq:introduction_deepsdp}.

%------------------------------------------------------------------------------------------------------------------------
\subsection{Formulation of DeepSDP} \label{ssec:formulation_deepsdp}

In this subsection, we present the formulation of DeepSDP~\eqref{eq:introduction_deepsdp}.
% By defining
% to represent the output vectors of all layers with a simple % linear combination,
%
We now consider a QCQP with the global quadratic constraints (see \cref{sssec:globalqc_relu})
    and the repeated nonlinearities (see  \cref{sssec:repeated_nonlinearities}):
\begin{equation} \label{eq:qcqp_for_deepsdp}
    \begin{array}{rl}
        \min & \trans{\begin{bmatrix} 1 \\ \x^0 \\ W^L\x^L + \b^L \end{bmatrix}} H \begin{bmatrix} 1 \\ \x^0 \\ W^L\x^L + \b^L \end{bmatrix} \\
        \subto
             & \trans{\begin{bmatrix} 1 \\ \x^0 \end{bmatrix}} P \begin{bmatrix} 1 \\ \x^0 \end{bmatrix} \leq 0, \; \forall P \in \PC_\XC, \\
             & \eqref{eq:primal_global_quadratic_constraint}, \; \eqref{eq:sloperestriction_relu_yy}, \;
                \w \coloneqq \overline{W} \begin{bmatrix} \x^0 \\ \vdots \\ \x^L \end{bmatrix} + \overline{\b} , \;
                \phi(\w) \coloneqq \overline{E} \begin{bmatrix} \x^0 \\ \vdots \\ \x^L \end{bmatrix}, \\
             & \x^k \in \Real^{n_k}, \; k = 0,\ldots,L,
    \end{array}
\end{equation}
where $H \in \SymMat^{1+n_0+n_{L+1}}$ is a given coefficient matrix for finding an extreme point of the output space $\Real^{n_{L+1}}$ and 
$$
    \overline{W} \coloneqq \left[\begin{array}{c|c} \begin{matrix} W^0 & & O \\ & \ddots & \\ O & & W^{L-1} \end{matrix} & \begin{matrix} O \\ \vdots \\ O \end{matrix} \end{array}\right],     \quad
    \overline{\b} \coloneqq \begin{bmatrix} \b^0 \\ \vdots \\ \b^{L-1} \end{bmatrix}, \quad 
    \overline{E} \coloneqq \left[ \begin{matrix} E^1 \\ \vdots \\ E^L \end{matrix} \right].
    % \bar{\B} \coloneqq \left[\begin{array}{c|c} \begin{matrix} O \\ \vdots \\ O \end{matrix} & \begin{matrix} I_{n_1} & & O \\ & \ddots & \\ O & & I_{n_L} \end{matrix}
    % \end{array}\right].
$$
Here, $E^i \in \Real^{n_i \times (n_0 + \cdots + n_L)}$ is a matrix such that $E^i \left[\x^0; \dots; \x^L\right] = \x^i$,
{\it i.e.,}  $E^i$ extracts $\x^i$ from $\left[\x^0; \dots; \x^L\right]$.

To simplify \eqref{eq:qcqp_for_deepsdp} by representing the variable vectors in the constraints as $[1; \x^0; \cdots; \x^L]$,
    we define three matrix functions:
\begin{align*}
    M_\mathrm{in}(P) &= \trans{\begin{bmatrix} 1 & \trans{\0} \\ \0 & E^0 \end{bmatrix}} P \begin{bmatrix} 1 & \trans{\0} \\ \0 & E^0 \end{bmatrix}, \,
    M_\mathrm{mid}(Q) = \trans{\begin{bmatrix} 1 & \trans{\0} \\ \overline{\b} & \overline{W} \\ \0 & \overline{E} \end{bmatrix}} Q \begin{bmatrix} 1 & \trans{\0} \\ \overline{\b} & \overline{W} \\ \0 & \overline{E} \end{bmatrix}, \\
    M_\mathrm{out}(S) &= \trans{\begin{bmatrix} 1 & \trans{\0} \\ \0 & E^0 \\ \b^L & W^LE^L \end{bmatrix}} S \begin{bmatrix} 1 & \trans{\0} \\ \0 & E^0 \\ \b^L & W^LE^L \end{bmatrix}.
\end{align*}
%
% I think that we have to include either (1) or (2), (Azuma)
% (1)
Then, the SDP relaxation of \eqref{eq:qcqp_for_deepsdp} is
\begin{equation} \label{eq:sdp_for_deepsdp}
    \begin{array}{rl}
        \min & \ip{M_\mathrm{out}(H)}{G} \\
        \subto
             & \ip{M_\mathrm{in}(P)}{G} \leq 0, \; \forall P \in \PC_\XC, \\
             & \ip{M_\mathrm{mid}\left(
                \begin{bmatrix} 0 & \trans{\0} & \trans{\0} \\ \0 & O & -\e_i\trans{\e_i} \\ \0 & -\e_i\trans{\e_i} & 2 \e_i\trans{\e_i} \end{bmatrix}
                \right)}{G} = 0, \quad i = 1,\ldots,N, \\
             & \ip{M_\mathrm{mid}\left(
                \begin{bmatrix} 0 & \trans{\e_i} & -\trans{\e_i} \\ \e_i & O & O  \\ -\e_i & O & O \end{bmatrix}
                \right)}{G} \leq 0, \;
               \ip{M_\mathrm{mid}\left(
                \begin{bmatrix} 0 & \trans{\0} & -\trans{\e_i} \\ \0 & O & O \\ -\e_i & O & O \end{bmatrix}
                \right)}{G} \leq 0, \quad i = 1,\ldots,N, \\
             & \ip{M_\mathrm{mid}\left(
                \begin{bmatrix}
                    0  & \trans{\0} & \trans{\0} \\
                    \0 & O & -(\e_i - \e_j)\trans{(\e_i - \e_j)} \\
                    \0 &-(\e_i - \e_j)\trans{(\e_i - \e_j)} & 2(\e_i - \e_j)\trans{(\e_i - \e_j)}
                \end{bmatrix}\right)}{G} \leq 0, \quad 1 \leq i < j \leq N, \\
             & G \coloneqq \begin{bmatrix} 1 & \trans{\x} \\ \x & X \end{bmatrix} \succeq O, \quad \x \in \Real^{n_0 + N}, \quad X \in \SymMat^{n_0 + N}.
    \end{array}
\end{equation}

DeepSDP~\eqref{eq:introduction_deepsdp} is formulated as the dual problem of \eqref{eq:sdp_for_deepsdp}.
Let $\boldsymbol{\lambda}$, $\boldsymbol{\nu}$, $\boldsymbol{\eta}$, and $\boldsymbol{\mu}$ be
    dual variables for
    \eqref{eq:primal_global_quadratic_constraint_1},
    \eqref{eq:primal_global_quadratic_constraint_2}, 
    \eqref{eq:primal_global_quadratic_constraint_3}, and
    \eqref{eq:sloperestriction_relu_yy}, respectively.
 The terms associated with $M_\mathrm{mid}$, among the terms in the Lagrangian function of \eqref{eq:sdp_for_deepsdp}, 
 can be written as follows:
% $$
% \mathcal{Q}_\phi \coloneqq \left\{
%     \begin{bmatrix} 0 & Q_{12} & Q_{13} \\ \trans{Q_{12}} & Q_{22} & Q_{23} \\ \trans{Q_{13}} & \trans{Q_{23}} & Q_{33} \end{bmatrix} \,\middle|\,
%     \boldsymbol{\lambda}, \boldsymbol{\nu}, \boldsymbol{\eta} \in \Real_+^{n_0 + N},  \boldsymbol{\mu} \in \Real_+^{(n_0 + N)^2}
%     \cap \SymMat^{n_0+N}
%     \right\} \subset \SymMat^{1+2(n_0 + N)}
% $$
$$
\mathcal{Q}_\phi \coloneqq \left\{
    \begin{bmatrix} 0 & Q_{12} & Q_{13} \\ \trans{Q_{12}} & Q_{22} & Q_{23} \\ \trans{Q_{13}} & \trans{Q_{23}} & Q_{33} \end{bmatrix} \,\middle|\,
    \boldsymbol{\lambda}, \boldsymbol{\nu}, \boldsymbol{\eta} \in \Real_+^{N},\;  \mu_{ij} \ge 0,\, 1 \le i < j \le N
    \right\} \subset \SymMat^{1+2N}
$$
where
\begin{equation} \label{eq:Qij_on_globalqc_relu}
    \left.
    \begin{gathered}
        Q_{12} \coloneqq \trans{\boldsymbol{\nu}}, \quad
        Q_{13} \coloneqq - \trans{\boldsymbol{\nu}} - \trans{\boldsymbol{\eta}}, \quad
        Q_{22} \coloneqq O, \quad
        Q_{23} \coloneqq -\left(\diag\left(\boldsymbol{\lambda}\right) + T\right), \\
        Q_{33} \coloneqq 2\left(\diag\left(\boldsymbol{\lambda}\right) + T\right), \quad
        T      \coloneqq \sum_{i = 1}^{N - 1} \sum_{j = i + 1}^{N} \mu_{ij} (\e_i - \e_j)\trans{(\e_i - \e_j)}.
    \end{gathered} \quad \right\}
\end{equation}
With $\XC$,  $\mathcal{Q}_\phi$, and $\SC$ defined in \cref{ssec:safety_set},
    DeepSDP~\eqref{eq:introduction_deepsdp} can be obtained.
 We  can also use the following theorem in \cite{Fazlyab2022} to determine the safety specification set $S_y$ of a given NN.
\begin{theorem}{\upshape \cite[Theorem~2]{Fazlyab2022}} \label{thm:fazlyab_theorem2}
    Let $(P, Q, S)$ be a feasible solution of \eqref{eq:introduction_deepsdp}.
    Then, for any $\x^0 \in \XC$, it holds that
    \begin{equation} \label{eq:constraints_safety_region}
        \trans{\begin{bmatrix}1 \\ \x^0 \\ f(\x^0) \end{bmatrix}} S \begin{bmatrix}1 \\ \x^0 \\ f(\x^0) \end{bmatrix} \geq 0.
    \end{equation}
\end{theorem}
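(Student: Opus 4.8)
The plan is to run a genuine forward pass of the network and feed the resulting lifted vector into the positive semidefinite certificate $M_\mathrm{in}(P)+M_\mathrm{mid}(Q)+M_\mathrm{out}(S)\succeq O$ that expresses feasibility of \eqref{eq:introduction_deepsdp}. Concretely, I would fix $\x^0\in\XC$, propagate it through \eqref{eq:neural_network} to obtain the activations $\x^1,\dots,\x^L$, set $\x\coloneqq[\x^0;\dots;\x^L]$ and $\w\coloneqq\overline{W}\x+\overline{\b}$; the ReLU recursion in \eqref{eq:neural_network} then forces $\phi(\w)=\overline{E}\x$, so the lifting of this genuine point is exact. Writing $\boldsymbol\xi\coloneqq[1;\x]$ for the lifted vector and multiplying the certificate on both sides by $\boldsymbol\xi$ gives
$$\trans{\boldsymbol\xi}M_\mathrm{in}(P)\boldsymbol\xi+\trans{\boldsymbol\xi}M_\mathrm{mid}(Q)\boldsymbol\xi+\trans{\boldsymbol\xi}M_\mathrm{out}(S)\boldsymbol\xi\ge 0.$$
I would then show the first two summands are nonpositive and the third equals the quantity in \eqref{eq:constraints_safety_region}, so that rearranging delivers the claim.

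The two outer terms are routine. By the definition of $M_\mathrm{in}$ and the fact that $E^0$ extracts $\x^0$, the first term equals $\trans{[1;\x^0]}P[1;\x^0]$, which is $\le 0$ because $P\in\PC_\XC$ and $\x^0\in\XC$ satisfy \eqref{eq:def_PX}. Likewise, substituting $f(\x^0)=W^L\x^L+\b^L$ and using that $E^0$ and $E^L$ extract $\x^0$ and $\x^L$, the third term unwinds to $\trans{[1;\x^0;f(\x^0)]}S[1;\x^0;f(\x^0)]$, which is exactly the left-hand side of \eqref{eq:constraints_safety_region}.

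The crux, and the main obstacle, is the middle term $\trans{\boldsymbol\xi}M_\mathrm{mid}(Q)\boldsymbol\xi=\trans{[1;\w;\phi(\w)]}Q[1;\w;\phi(\w)]$, which must be bounded above by zero for every $Q\in\QC_\phi$. The key observation is that, by \eqref{eq:Qij_on_globalqc_relu}, each such $Q$ is a nonnegative combination, with weights $\boldsymbol\lambda,\boldsymbol\nu,\boldsymbol\eta\in\Real_+^N$ and $\mu_{ij}\ge 0$, of the global constraint matrices in \eqref{eq:primal_global_quadratic_constraint} and the repeated-nonlinearity matrices in \eqref{eq:sloperestriction_relu_yy}. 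It therefore suffices to evaluate each constituent quadratic form at the genuine activations $\w,\phi(\w)$ and check its sign: the equality matrix \eqref{eq:primal_global_quadratic_constraint_1} contributes $2\varphi(w_i)(\varphi(w_i)-w_i)=0$ by ReLU complementarity; the matrix in \eqref{eq:primal_global_quadratic_constraint_2} contributes $2(w_i-\varphi(w_i))\le 0$ since $\varphi(w_i)\ge w_i$; the matrix in \eqref{eq:primal_global_quadratic_constraint_3} contributes $-2\varphi(w_i)\le 0$; and each repeated-nonlinearity matrix contributes $2(\varphi(w_i)-\varphi(w_j))[(\varphi(w_i)-\varphi(w_j))-(w_i-w_j)]\le 0$, which is precisely inequality \eqref{eq:sloperestriction_relu_yy_each}. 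Pairing each nonpositive value with its nonnegative weight and summing yields $\trans{\boldsymbol\xi}M_\mathrm{mid}(Q)\boldsymbol\xi\le 0$. No single one of these evaluations is deep, but the bookkeeping across all four matrix families is the part that needs care; the slope-restriction sign for the repeated nonlinearities is the one requiring a short case split on the sign of $w_i-w_j$.

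Finally, substituting the two nonpositivity bounds into the displayed feasibility inequality gives $\trans{[1;\x^0;f(\x^0)]}S[1;\x^0;f(\x^0)]\ge-\trans{\boldsymbol\xi}M_\mathrm{in}(P)\boldsymbol\xi-\trans{\boldsymbol\xi}M_\mathrm{mid}(Q)\boldsymbol\xi\ge 0$. Since $\x^0\in\XC$ was arbitrary, \eqref{eq:constraints_safety_region} holds for every input, which completes the argument.
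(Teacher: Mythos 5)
Your proof is correct. The paper states this theorem without proof, simply citing \cite[Theorem~2]{Fazlyab2022}, and your argument is precisely the standard certificate-evaluation proof from that reference: evaluate the positive semidefinite certificate at the lifted forward-pass vector, note that the $M_\mathrm{in}(P)$ term is nonpositive by \eqref{eq:def_PX} and that the $M_\mathrm{mid}(Q)$ term is nonpositive because every $Q \in \QC_\phi$ is, via \eqref{eq:Qij_on_globalqc_relu}, a nonnegative combination of the matrices in \eqref{eq:primal_global_quadratic_constraint} and \eqref{eq:sloperestriction_relu_yy}, each of which evaluates to a nonpositive number at genuine ReLU activations, and conclude that the $M_\mathrm{out}(S)$ term, which equals the left-hand side of \eqref{eq:constraints_safety_region}, must be nonnegative.
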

\noindent
Theorem~\ref{thm:fazlyab_theorem2} implies that
    once we find an optimal solution $(P^*, Q^*, S^*)$ of \eqref{eq:introduction_deepsdp},
    the output set $\YC$ is a subset of the safety specification set
$$
    S_y \coloneqq \left\{ \y \in \Real^{n_{L+1}} \,\middle|\, 
        \trans{\begin{bmatrix} 1 \\ \x^0 \\ \y \end{bmatrix}} S^* \begin{bmatrix}1 \\ \x^0 \\ \y \end{bmatrix} \geq 0
        \ \text{for all $\x^0 \in \XC$}
        \right\}.
$$

%------------------------------------------------------------------------------------------------------------------
\subsection{DeepSDP \texorpdfstring{\eqref{eq:introduction_deepsdp}}{}  and tight SDP relaxations } \label{ssec:between_tightness_and_deepsdp}

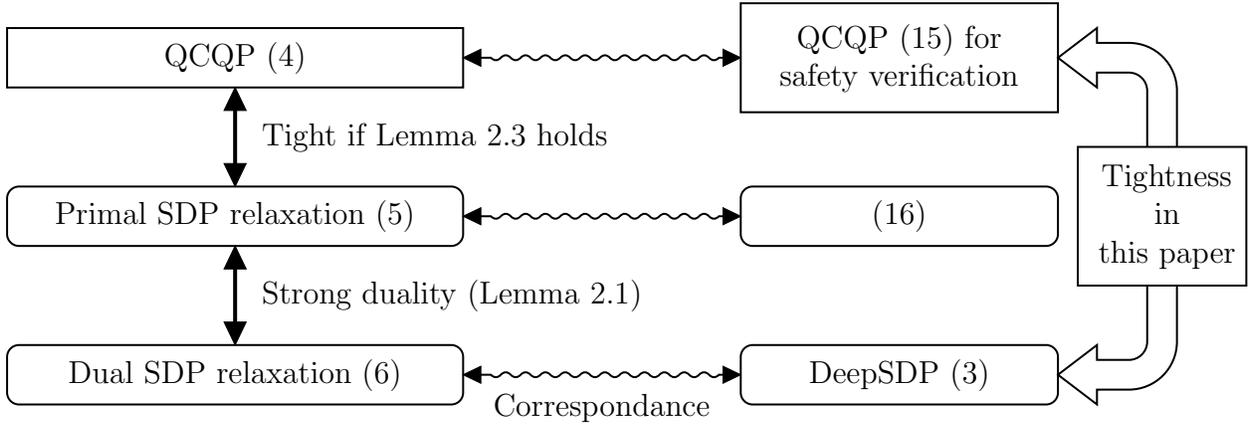
\begin{figure}[t]
    \tikzset{every picture/.style={line width=0.75pt}}
    \begin{tikzpicture}[x=0.75pt,y=0.75pt,yscale=-1,xscale=1]
        \draw  [fill={rgb, 255:red, 255; green, 255; blue, 255 }  ,fill opacity=1 ] (10,20) -- (240,20) -- (240,50) -- (10,50) -- cycle ;
        \draw  [fill={rgb, 255:red, 255; green, 255; blue, 255 }  ,fill opacity=1 ] (10,106) .. controls (10,102.69) and (12.69,100) .. (16,100) -- (234,100) .. controls (237.31,100) and (240,102.69) .. (240,106) -- (240,124) .. controls (240,127.31) and (237.31,130) .. (234,130) -- (16,130) .. controls (12.69,130) and (10,127.31) .. (10,124) -- cycle ;
        \draw  [fill={rgb, 255:red, 255; green, 255; blue, 255 }  ,fill opacity=1 ] (10,186) .. controls (10,182.69) and (12.69,180) .. (16,180) -- (234,180) .. controls (237.31,180) and (240,182.69) .. (240,186) -- (240,204) .. controls (240,207.31) and (237.31,210) .. (234,210) -- (16,210) .. controls (12.69,210) and (10,207.31) .. (10,204) -- cycle ;
        \draw [line width=1.5]    (125,54) -- (125,96) ;
        \draw [shift={(125,100)}, rotate = 270] [fill={rgb, 255:red, 0; green, 0; blue, 0 }  ][line width=0.08]  [draw opacity=0] (11.61,-5.58) -- (0,0) -- (11.61,5.58) -- cycle    ;
        \draw [shift={(125,50)}, rotate = 90] [fill={rgb, 255:red, 0; green, 0; blue, 0 }  ][line width=0.08]  [draw opacity=0] (11.61,-5.58) -- (0,0) -- (11.61,5.58) -- cycle    ;
        \draw [line width=1.5]    (125,134) -- (125,176) ;
        \draw [shift={(125,180)}, rotate = 270] [fill={rgb, 255:red, 0; green, 0; blue, 0 }  ][line width=0.08]  [draw opacity=0] (11.61,-5.58) -- (0,0) -- (11.61,5.58) -- cycle    ;
        \draw [shift={(125,130)}, rotate = 90] [fill={rgb, 255:red, 0; green, 0; blue, 0 }  ][line width=0.08]  [draw opacity=0] (11.61,-5.58) -- (0,0) -- (11.61,5.58) -- cycle    ;
        \draw  [fill={rgb, 255:red, 255; green, 255; blue, 255 }  ,fill opacity=1 ] (380,7.5) -- (540,7.5) -- (540,62.5) -- (380,62.5) -- cycle ;
        \draw  [fill={rgb, 255:red, 255; green, 255; blue, 255 }  ,fill opacity=1 ] (380,106) .. controls (380,102.69) and (382.69,100) .. (386,100) -- (534,100) .. controls (537.31,100) and (540,102.69) .. (540,106) -- (540,124) .. controls (540,127.31) and (537.31,130) .. (534,130) -- (386,130) .. controls (382.69,130) and (380,127.31) .. (380,124) -- cycle ;
        \draw  [fill={rgb, 255:red, 255; green, 255; blue, 255 }  ,fill opacity=1 ] (380,186) .. controls (380,182.69) and (382.69,180) .. (386,180) -- (534,180) .. controls (537.31,180) and (540,182.69) .. (540,186) -- (540,204) .. controls (540,207.31) and (537.31,210) .. (534,210) -- (386,210) .. controls (382.69,210) and (380,207.31) .. (380,204) -- cycle ;
        \draw    (377,195) -- (369,195) .. controls (367.33,196.67) and (365.67,196.67) .. (364,195) .. controls (362.33,193.33) and (360.67,193.33) .. (359,195) .. controls (357.33,196.67) and (355.67,196.67) .. (354,195) .. controls (352.33,193.33) and (350.67,193.33) .. (349,195) .. controls (347.33,196.67) and (345.67,196.67) .. (344,195) .. controls (342.33,193.33) and (340.67,193.33) .. (339,195) .. controls (337.33,196.67) and (335.67,196.67) .. (334,195) .. controls (332.33,193.33) and (330.67,193.33) .. (329,195) .. controls (327.33,196.67) and (325.67,196.67) .. (324,195) .. controls (322.33,193.33) and (320.67,193.33) .. (319,195) .. controls (317.33,196.67) and (315.67,196.67) .. (314,195) .. controls (312.33,193.33) and (310.67,193.33) .. (309,195) .. controls (307.33,196.67) and (305.67,196.67) .. (304,195) .. controls (302.33,193.33) and (300.67,193.33) .. (299,195) .. controls (297.33,196.67) and (295.67,196.67) .. (294,195) .. controls (292.33,193.33) and (290.67,193.33) .. (289,195) .. controls (287.33,196.67) and (285.67,196.67) .. (284,195) .. controls (282.33,193.33) and (280.67,193.33) .. (279,195) .. controls (277.33,196.67) and (275.67,196.67) .. (274,195) .. controls (272.33,193.33) and (270.67,193.33) .. (269,195) .. controls (267.33,196.67) and (265.67,196.67) .. (264,195) .. controls (262.33,193.33) and (260.67,193.33) .. (259,195) .. controls (257.33,196.67) and (255.67,196.67) .. (254,195) -- (251,195) -- (243,195) ;
        \draw [shift={(240,195)}, rotate = 360] [fill={rgb, 255:red, 0; green, 0; blue, 0 }  ][line width=0.08]  [draw opacity=0] (8.93,-4.29) -- (0,0) -- (8.93,4.29) -- cycle    ;
        \draw [shift={(380,195)}, rotate = 180] [fill={rgb, 255:red, 0; green, 0; blue, 0 }  ][line width=0.08]  [draw opacity=0] (8.93,-4.29) -- (0,0) -- (8.93,4.29) -- cycle    ;
        \draw    (377,115) -- (369,115) .. controls (367.33,116.67) and (365.67,116.67) .. (364,115) .. controls (362.33,113.33) and (360.67,113.33) .. (359,115) .. controls (357.33,116.67) and (355.67,116.67) .. (354,115) .. controls (352.33,113.33) and (350.67,113.33) .. (349,115) .. controls (347.33,116.67) and (345.67,116.67) .. (344,115) .. controls (342.33,113.33) and (340.67,113.33) .. (339,115) .. controls (337.33,116.67) and (335.67,116.67) .. (334,115) .. controls (332.33,113.33) and (330.67,113.33) .. (329,115) .. controls (327.33,116.67) and (325.67,116.67) .. (324,115) .. controls (322.33,113.33) and (320.67,113.33) .. (319,115) .. controls (317.33,116.67) and (315.67,116.67) .. (314,115) .. controls (312.33,113.33) and (310.67,113.33) .. (309,115) .. controls (307.33,116.67) and (305.67,116.67) .. (304,115) .. controls (302.33,113.33) and (300.67,113.33) .. (299,115) .. controls (297.33,116.67) and (295.67,116.67) .. (294,115) .. controls (292.33,113.33) and (290.67,113.33) .. (289,115) .. controls (287.33,116.67) and (285.67,116.67) .. (284,115) .. controls (282.33,113.33) and (280.67,113.33) .. (279,115) .. controls (277.33,116.67) and (275.67,116.67) .. (274,115) .. controls (272.33,113.33) and (270.67,113.33) .. (269,115) .. controls (267.33,116.67) and (265.67,116.67) .. (264,115) .. controls (262.33,113.33) and (260.67,113.33) .. (259,115) .. controls (257.33,116.67) and (255.67,116.67) .. (254,115) -- (251,115) -- (243,115) ;
        \draw [shift={(240,115)}, rotate = 360] [fill={rgb, 255:red, 0; green, 0; blue, 0 }  ][line width=0.08]  [draw opacity=0] (8.93,-4.29) -- (0,0) -- (8.93,4.29) -- cycle    ;
        \draw [shift={(380,115)}, rotate = 180] [fill={rgb, 255:red, 0; green, 0; blue, 0 }  ][line width=0.08]  [draw opacity=0] (8.93,-4.29) -- (0,0) -- (8.93,4.29) -- cycle    ;
        \draw    (377,35) -- (369,35) .. controls (367.33,36.67) and (365.67,36.67) .. (364,35) .. controls (362.33,33.33) and (360.67,33.33) .. (359,35) .. controls (357.33,36.67) and (355.67,36.67) .. (354,35) .. controls (352.33,33.33) and (350.67,33.33) .. (349,35) .. controls (347.33,36.67) and (345.67,36.67) .. (344,35) .. controls (342.33,33.33) and (340.67,33.33) .. (339,35) .. controls (337.33,36.67) and (335.67,36.67) .. (334,35) .. controls (332.33,33.33) and (330.67,33.33) .. (329,35) .. controls (327.33,36.67) and (325.67,36.67) .. (324,35) .. controls (322.33,33.33) and (320.67,33.33) .. (319,35) .. controls (317.33,36.67) and (315.67,36.67) .. (314,35) .. controls (312.33,33.33) and (310.67,33.33) .. (309,35) .. controls (307.33,36.67) and (305.67,36.67) .. (304,35) .. controls (302.33,33.33) and (300.67,33.33) .. (299,35) .. controls (297.33,36.67) and (295.67,36.67) .. (294,35) .. controls (292.33,33.33) and (290.67,33.33) .. (289,35) .. controls (287.33,36.67) and (285.67,36.67) .. (284,35) .. controls (282.33,33.33) and (280.67,33.33) .. (279,35) .. controls (277.33,36.67) and (275.67,36.67) .. (274,35) .. controls (272.33,33.33) and (270.67,33.33) .. (269,35) .. controls (267.33,36.67) and (265.67,36.67) .. (264,35) .. controls (262.33,33.33) and (260.67,33.33) .. (259,35) .. controls (257.33,36.67) and (255.67,36.67) .. (254,35) -- (251,35) -- (243,35) ;
        \draw [shift={(240,35)}, rotate = 360] [fill={rgb, 255:red, 0; green, 0; blue, 0 }  ][line width=0.08]  [draw opacity=0] (8.93,-4.29) -- (0,0) -- (8.93,4.29) -- cycle    ;
        \draw [shift={(380,35)}, rotate = 180] [fill={rgb, 255:red, 0; green, 0; blue, 0 }  ][line width=0.08]  [draw opacity=0] (8.93,-4.29) -- (0,0) -- (8.93,4.29) -- cycle    ;
        \draw   (585,150) -- (585,178.3) .. controls (585,183.38) and (580.88,187.5) .. (575.8,187.5) -- (559.67,187.5) -- (559.67,180) -- (540,195) -- (559.67,210) -- (559.67,202.5) -- (575.8,202.5) .. controls (589.17,202.5) and (600,191.67) .. (600,178.3) -- (600,150) -- (600,80) -- (600,51.7) .. controls (600,38.33) and (589.17,27.5) .. (575.8,27.5) -- (559.67,27.5) -- (559.67,20) -- (540,35) -- (559.67,50) -- (559.67,42.5) -- (575.8,42.5) .. controls (580.88,42.5) and (585,46.62) .. (585,51.7) -- (585,80) -- cycle ;

        % Text Node
        \draw (137,155) node [anchor=west] [inner sep=0.75pt]   [align=left] {Strong duality (Lemma~\ref{lem:strong_duality_assumption})};
        \draw (137,75) node [anchor=west] [inner sep=0.75pt]   [align=left] {Tight if Lemma~\ref{lem:rank_one_solution_collinear} holds};
        \draw (125,115) node  [font=\normalsize] [align=left] {Primal SDP relaxation \eqref{eq:general_sdp_primal}};
        \draw (125,35) node  [font=\normalsize] [align=left] { QCQP \eqref{eq:general_qcqp}};
        \draw (125,195) node  [font=\normalsize] [align=left] {Dual SDP relaxation \eqref{eq:general_sdp_dual}};
        %\draw (12,170) node [anchor=west] [inner sep=0.75pt]   [align=left] {Tight};
        \draw (460,115) node  [font=\normalsize] [align=left] {\eqref{eq:sdp_for_deepsdp}};
        \draw (460,35) node  [font=\normalsize] [align=left] {\begin{minipage}[lt]{91.55pt}\setlength\topsep{0pt}
        \begin{center} QCQP \eqref{eq:qcqp_for_deepsdp} for\\safety verification \end{center}\end{minipage}};
        \draw (460,195) node  [font=\normalsize] [align=left] {DeepSDP \eqref{eq:introduction_deepsdp}};
        \draw (310,203) node [anchor=north] [inner sep=0.75pt]   [align=left] {Correspondance};
        \draw [fill={rgb, 255:red, 255; green, 255; blue, 255 }  ,fill opacity=1 ] (550, 80) -- (635, 80) -- (635, 150) -- (550, 150) -- cycle ;
        \draw (595,115) node [align=left] {\begin{minipage}[lt]{75.5pt}\setlength\topsep{0pt} \begin{center} Tightness \\ in \\this paper \end{center} \end{minipage}};
    \end{tikzpicture}

    \caption{Correspondence between QCQPs and DeepSDP  for the tightness.  The solid lines display the equivalent
     optimal value of the problems.  The wavy lines illustrate the corresponding relationship between the two problems.}
    \label{fig:correspondence_tightness}
\end{figure}

The tight SDP relaxation, discussed in the previous subsection,
    plays a crucial role for the accuracy of a solution to DeepSDP \eqref{eq:introduction_deepsdp}.
If the objective function $h$ of \eqref{eq:qcqp_for_deepsdp} is linear or quadratic,
    the problem \eqref{eq:qcqp_for_deepsdp} can be regarded as a QCQP~\eqref{eq:general_qcqp}.
In fact, all the constraints of \eqref{eq:qcqp_for_deepsdp} are at most degree $2$,
    and each  can be represented as
$$
    \trans{\begin{bmatrix} 1 \\ \x \end{bmatrix}}
    \begin{bmatrix} \tilde{b} & \trans{\tilde{\q}} \\ \tilde{\q} & \tilde{Q} \end{bmatrix} \begin{bmatrix} 1 \\ \x \end{bmatrix} \leq 0
$$
with $\x \coloneqq \left[ \x^0; \cdots ; \x^L \right] \in \Real^{n_0 + N}$,
    appropriate coefficients $\tilde{b} \in \Real$, $\tilde{\q} \in \Real^{n_0 + N}$, and $\tilde{Q} \in \SymMat^{n_0 + N}$. 
As DeepSDP~\eqref{eq:introduction_deepsdp} is the dual problem of the SDP relaxation~\eqref{eq:sdp_for_deepsdp} of QCQP \eqref{eq:qcqp_for_deepsdp},
    the relationship between \eqref{eq:qcqp_for_deepsdp} and \eqref{eq:introduction_deepsdp} is
    analogous to %a case of 
    the relationship between the standard QCQP \eqref{eq:general_qcqp} and its dual SDP relaxation \eqref{eq:general_sdp_dual}.
\cref{fig:correspondence_tightness} shows % a graphical overview of
    the correspondence between \eqref{eq:qcqp_for_deepsdp} and \eqref{eq:introduction_deepsdp}.
%Accordingly, in the remainder of this paper,
    In the subsequent discussion,
    we refer to \eqref{eq:sdp_for_deepsdp} as the primal SDP relaxation of \eqref{eq:qcqp_for_deepsdp},
    and distinguish it from the dual SDP relaxation, DeepSDP~\eqref{eq:introduction_deepsdp}.
When strong duality holds between these relaxations \eqref{eq:introduction_deepsdp} and \eqref{eq:sdp_for_deepsdp},
    \eqref{eq:introduction_deepsdp} is a tight relaxation of \eqref{eq:qcqp_for_deepsdp}
    if and only if \eqref{eq:sdp_for_deepsdp} is also a tight relaxation of it, as shown in \cref{fig:correspondence_tightness}.
%The main focus of this paper is 
We focus on the tightness of DeepSDP~\eqref{eq:introduction_deepsdp} for \eqref{eq:qcqp_for_deepsdp}.

Since the cardinality of $\PC_\XC$ can be infinite, 
selecting the appropriate constraints is crucial.
In subsequent discussion, the structure of $\PC_\XC$ can be simplified by restricting the input set $\XC$
to an ellipsoid or a rectangle.
With a given center $\hat{\x}$ and a radius $\varepsilon$,
a ellipsoidal input set can be represented by $\left\{\x \in \Real^{n_0}\,\middle|\,\left\|\x-\hat{\x}\right\|_2\leq\varepsilon\right\}$;
and a rectangular input set  by $\left\{\x \in \Real^{n_0}\,\middle|\,\left\|\x-\hat{\x}\right\|_1\leq\varepsilon\right\}$.

Other factors can also  affect the accuracy of DeepSDP \eqref{eq:introduction_deepsdp}.
For example, if $\SC$ does not  contain the necessary matrices to represent the true safety specification set, the recovered set will not be minimal.
We do not address other factors, such as the selection of $\SC$, in this paper.

%\input sect4.tex
%!TEX root = ./main.tex
\section{Tightness of DeepSDP \texorpdfstring{\eqref{eq:introduction_deepsdp}}{} for a single-neuron ReLU network} \label{sec:oneneuron}

We first analyze the tightness of DeepSDP \eqref{eq:introduction_deepsdp} with a single-neuron case of the following:
$$
    f(x) = \varphi\left(x + b^0\right), \quad x \in \XC \subseteq \Real,
$$
which is equivalent to 
the NN~\eqref{eq:neural_network} with $L = 1$, $n_0 = n_1 = 1$, $W^0 = W^1 = 1$ and $\b^1 = 0$. 
The output $y \coloneqq f(x^0)$ is $x^1$ in \eqref{eq:neural_network}.
In this case, the smallest safety specification set is clearly $\XC \cap \Real_+$ as only one ReLU activation function is applied in the NN.
    The following assumptions hold throughout this section.
    \begin{assum} \label{asm:X_box}
        The input set $\XC$ is a closed interval,
        i.e., % $\exists \underline{x}, \bar{x}$ such that $\underline{x} \leq x \leq \bar{x}$ for any $x \in \XC$.
        $\XC = \{ x \,|\, \underline{x} \leq x \leq \bar{x}\}$ 
        for some $\underline{x}$ and $\bar{x}$.
    \end{assum}
    \begin{assum} \label{asm:Sy_poly}
        The candidate of the safety specification set $S_y$ on the output space is a polytope.
    \end{assum}
Under these assumptions,
the safety set $S_y$ can be represented by 
a closed interval $[\underline{d}, \overline{d}]$,
which can be further rewritten as %the intersection 
$\left\{ y \in \Real \,\middle|\, y - \underline{d} \geq 0 \right\} \cap
\left\{ y \in \Real \,\middle|\, -y - (-\overline{d}) \geq 0 \right\}$.
For $c \in \{-1, +1\}$ and $d \in \Real$, to determine whether $f(x)$ lies within $\left\{ y \in \Real \,\middle|\, cy - d \geq 0 \right\}$ for all $x \in \XC$,
we check if the following equation holds:
$$
        \trans{\begin{bmatrix} 1 \\ x \\ f(x) \end{bmatrix}}
        \begin{bmatrix}
            -2d & 0 & c \\ 0 & 0 & 0 \\ c & 0 & 0
        \end{bmatrix}
        \begin{bmatrix} 1 \\ x \\ f(x) \end{bmatrix} \geq 0.
$$
By Assumptions % \ref{asm:ReLU_activation}, 
\ref{asm:X_box} and  \ref{asm:Sy_poly},
 the matrix sets $\PC_\XC$, $\QC_\phi$, and $\SC$ can be written as
\begin{gather*}
    \PC_\XC = \left\{
        \begin{bmatrix}
            2 \underline{x} \bar{x} \gamma & - \left(\underline{x} + \bar{x} \right)\gamma \\
            - \left(\underline{x} + \bar{x} \right)\gamma & 2\gamma
        \end{bmatrix} \,\middle|\, \gamma \geq 0
        \right\}, \\
    \QC_\phi = \left\{
        \begin{bmatrix}
            2b^0\nu & \nu & - \nu - \eta - b^0\lambda \\ \nu & 0 & - \lambda \\ - \nu - \eta -b^0 \lambda & - \lambda & 2\lambda
        \end{bmatrix} \,\middle|\, \lambda, \nu, \eta \in \Real_+ \right\}, \\
    \SC = \left\{
        \begin{bmatrix}
            -2d & 0 & c \\ 0 & 0 & 0 \\ c & 0 & 0
        \end{bmatrix}
        \,\middle|\, d \in \Real \right\},
\end{gather*}
with $\gamma, \lambda, \nu, \eta \in \Real_+$ and $d \in \Real$.
Since there is only one neuron, 
    no constraints exist for the relationship between two neurons, and
    the repeated nonlinearities do not appear, {\it i.e.}, $T = 0$ in \eqref{eq:introduction_deepsdp}.
%Using the above matrix sets,
Consequently,
    DeepSDP~\eqref{eq:introduction_deepsdp} for a single-neuron NN % under Assumptions~\ref{asm:ReLU_activation},  \ref{asm:X_box} and \ref{asm:Sy_poly} 
    can be described as
    \begin{equation} \label{eq:deepsdp_oneneuron}
    \begin{array}{rl}
        \max & 2d \\
        \subto
             & \begin{bmatrix}
                   2 \underline{x} \bar{x} \gamma & - \left(\underline{x} + \bar{x} \right)\gamma & 0 \\
                   - \left(\underline{x} + \bar{x} \right)\gamma & 2\gamma & 0 \\
                   0 & 0 & 0
               \end{bmatrix} +
               \begin{bmatrix}
                   2b^0\nu & \nu & - \nu - \eta - b^0\lambda \\ \nu & 0 & - \lambda \\ - \nu - \eta - b^0\lambda & - \lambda & 2\lambda
               \end{bmatrix}  \\
             & \qquad
               + \begin{bmatrix}
                   -2d & 0 & c \\ 0 & 0 & 0 \\ c & 0 & 0
               \end{bmatrix} \succeq O, \ \  \gamma, \lambda, \nu, \eta \in \Real_+, \quad d \in \Real.
    \end{array}
\end{equation}
%
%The objective function of \eqref{eq:deepsdp_oneneuron} is set to $2d$ in order to find the minimal safety specification set.
Here, we maximize $d$ %that gives it 
under a fixed $c$ as DeepSDP \eqref{eq:introduction_deepsdp} is to find the minimum interval 
(the minimal safety specification set).

By  demonstrating that \eqref{eq:deepsdp_oneneuron} is a tight SDP relaxation in the subsequent discussion,
%we  show that \eqref{eq:deepsdp_oneneuron} is a tight SDP relaxation,
    we can obtain an optimal solution $d^*$, which is also an optimal solution of the QCQP formulation~\eqref{eq:qcqp_for_deepsdp}. % can be obtained. %or equivalently  \eqref{eq:deepsdp_oneneuron}.

%--------------------------------------------------------------------------------------------------------------
\subsection{Two-stage formulation}

We derive a nonconvex optimization problem of the form~\eqref{eq:general_nc_interpretation} in this section.
% as
%  we use  \cref{lem:rank_one_solution_collinear} 
 %    to determine the tightness of \eqref{eq:deepsdp_oneneuron}.
  
The dual of the SDP~\eqref{eq:deepsdp_oneneuron} is:  %and the following SDP are a primal-dual pair:
\begin{equation} \label{eq:primal_deepsdp_oneneuron}
    \begin{array}{rl}
        \min\limits_{x^0, x^1, X} & \ip{\begin{bmatrix} 0 & 0 & c \\ 0 & 0 & 0 \\ c & 0 & 0 \end{bmatrix}}{G} \\
        \subto
            & \ip{\begin{bmatrix}
                   2 \underline{x} \bar{x} & - \left(\underline{x} + \bar{x} \right) & 0 \\
                   - \left(\underline{x} + \bar{x} \right) & 2 & 0 \\
                   0 & 0 & 0
               \end{bmatrix}}{G} \leq 0, \;
                \ip{\begin{bmatrix} -2 & 0 & 0 \\ 0 & 0 & 0 \\ 0 & 0 & 0 \end{bmatrix}}{G} \leq -2, \\
            & \ip{\begin{bmatrix} 2b^0 & 1 & -1 \\ 1 & 0 & 0 \\ -1 & 0 & 0 \end{bmatrix}}{G} \leq 0, \;
                \ip{\begin{bmatrix} 0 & 0 & -1 \\ 0 & 0 & 0 \\ -1 & 0 & 0 \end{bmatrix}}{G} \leq 0, \;
                \ip{\begin{bmatrix} 0 & 0 & -b^0 \\ 0 & 0 & -1 \\ -b^0 & -1 & 2 \end{bmatrix}}{G} \leq 0, \\
            & G \coloneqq \begin{bNiceMatrix}[vlines=2,hlines=2,margin]
                1 & x^0 & x^1 \\
                x^0 & \Block{2-2}{X} & \\
                x^1 & & 
            \end{bNiceMatrix} \in \SymMat_+^3, \quad x^0, x^1 \in \Real, \quad X \in \SymMat_+^2.
    \end{array}
\end{equation}
Note that \eqref{eq:primal_deepsdp_oneneuron} is
    a rank-constrained SDP~\eqref{eq:general_rcsdp} with $p = 3$,
    and corresponds to an SDP relaxation of the following QCQP:
\begin{equation*} %\label{eq:qcqp_deepsdp_oneneuron}
    \begin{array}{rl}
        \min\limits_{x^0, x^1} & c x^1 \\
        \subto
            & \left(\bar{x} - x^0\right)\left(x^0 - \underline{x}\right) \geq 0, \; -2 \leq -2, \\
            & x^1 \geq x^0, \; x^1 \geq 0, \; x^1\left(x^0 - x^1\right) \geq 0.
    \end{array}
\end{equation*}
%Throughout this section, 
Since \eqref{eq:general_sdp_primal} is the primal SDP relaxation
and \eqref{eq:general_sdp_dual} is the dual SDP relaxation,
we now call \eqref{eq:primal_deepsdp_oneneuron}  a primal problem
and DeepSDP~\eqref{eq:deepsdp_oneneuron}  a dual problem. 

Let $\e \in \Real^3$ such that $\|\e\| = 1$.
For $\u^1, \v^1 \in \Real^3$, we let $x^0 = \trans{\e}\u^1$ and $x^1 = \trans{\e}\v^1$.
We substitute 
$$
    \begin{bNiceMatrix}[vlines=2,hlines=2,margin]
        1 & x^0 & x^1 \\
        x^0 & \Block{2-2}{X} & \\
        x^1 & & 
    \end{bNiceMatrix}
    = \begin{bNiceMatrix}[vlines=2,hlines=2,margin]
        \trans{\e}\e   & \trans{\e}\u^1 & \trans{\e}\v^1 \\
        \trans{\e}\u^1 & \trans{\left(\u^1\right)}\u^1 & \trans{\left(\u^1\right)}\v^1 \\
        \trans{\e}\v^1 & \trans{\left(\v^1\right)}\u^1 & \trans{\left(\v^1\right)}\v^1
    \end{bNiceMatrix},
$$
 into \eqref{eq:primal_deepsdp_oneneuron}    
as shown in \cref{ssec:tight_rank_constrained_SDP}.
Then, we obtain the following nonlinear optimization problem:
\begin{equation} \label{eq:nc_interpretation_for_oneneuron}
    \begin{array}{rl}
        \min\limits_{\u^1, \v^1} & c\trans{\e}\v^1 \\
        \subto
             & \trans{\e}\v^1 \geq \trans{\e}\left(\u^1 + b^0\e\right), \quad \trans{\e}\v^1 \geq 0, \quad
                \left\|\v^1\right\|_2^2 \leq \trans{\left(\u^1 + b^0\e\right)}\v^1, \\
             & \|\u^1 - \hat{x}\e\|_2^2 \leq \rho,
    \end{array}
\end{equation}
where $\hat{x} = \left(\underline{x} + \bar{x}\right) / 2$,
and $\rho = \hat{x}^2 - 2\underline{x}\bar{x}$.
The problems \eqref{eq:primal_deepsdp_oneneuron} and \eqref{eq:nc_interpretation_for_oneneuron}
    correspond to \eqref{eq:general_rcsdp} and \eqref{eq:general_nc_interpretation}, respectively.
The tightness of \eqref{eq:deepsdp_oneneuron} can be determined
by testing whether
    all the optimal solutions~\eqref{eq:nc_interpretation_for_oneneuron} are collinear with $\e$.
   
%However, directly 
Analyzing   directly
the collinearity of all optimal solutions is, however, challenging.
To address this difficulty, we  decompose \eqref{eq:nc_interpretation_for_oneneuron} into two-stage problems,
    and then combine their optimal solutions.
Using the approach in \cite{Zhang2020},
    %we decompose the problem~\eqref{eq:nc_interpretation_for_oneneuron} into a two-stage problem,
    %which consists of 
    the first-stage problem of the decomposed problem can be described as
\begin{equation} \label{eq:oneneuron_first_stage}
    \begin{array}{rl}
        \min\limits_{\v^1} & c\trans{\e}\v^1 \\
        \subto
             & \trans{\e}\v^1 \geq 0, \quad \Phi(\v^1) \leq \rho,
    \end{array}
\end{equation}
and the second-stage problem
\begin{equation} \label{eq:oneneuron_second_stage}
    \begin{array}{rl}
        \Phi(\z) \coloneqq \min\limits_{\u^1} & \|\u^1 - \hat{x}\e\|_2^2 \\
        \subto
             & \trans{\e}\z \geq \trans{\e}\left(\u^1 + b^0\e\right), \quad \left\|\z\right\|_2^2 \leq \trans{\left(\u^1 + b^0\e\right)}\z.
    \end{array}
\end{equation}
We note that the variables of the two-stage problem are different from those in \cite{Zhang2020}.
Specifically, the variable of the first-stage problem~\eqref{eq:oneneuron_first_stage} is $x^1$ (the
output of NN), while that in \cite{Zhang2020} is $x^0$ (the input of NN).

%----------------------------------------------------------------------------------------------------------------
\subsection{Analyzing the tightness}

\begin{lemma} \label{lem:value_second_stage_lemma}
    Suppose that $\e \in \Real^p$ satisfies $\|\e\| = 1$,
        $\z \in \Real^p$ is a feasible point of \eqref{eq:oneneuron_first_stage}, $\hat{x} \geq 0$, $b^0 = 0$,
        and the feasible set of \eqref{eq:oneneuron_second_stage} is nonempty.
    Then, $\left(\u^1\right)^* \coloneqq \z$ is a solution of \eqref{eq:oneneuron_second_stage}.
    In addition, $\Phi(\z) = \left\|\z - \hat{x}\e\right\|^2$ follows.
\end{lemma}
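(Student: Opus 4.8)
The plan is to read $\Phi(\z)$ as a Euclidean projection and to certify that $\u^1=\z$ attains it via the variational characterization of projections. With $b^0=0$ the feasible set of \eqref{eq:oneneuron_second_stage} becomes
$$
  C \coloneqq \left\{ \u^1 \in \Real^p \,\middle|\, \trans{\e}\u^1 \le \trans{\e}\z,\ \trans{\z}\u^1 \ge \|\z\|^2 \right\},
$$
an intersection of two closed half-spaces, hence closed and convex. Since the objective $\|\u^1-\hat{x}\e\|^2$ is the squared distance from the fixed point $\hat{x}\e$ to $\u^1$, the value $\Phi(\z)$ is exactly the squared distance from $\hat{x}\e$ to $C$, and its unique minimizer is the projection $P_C(\hat{x}\e)$. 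It therefore suffices to show $P_C(\hat{x}\e)=\z$, from which both conclusions follow at once.

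First I would verify $\z\in C$: substituting $\u^1=\z$ turns both defining inequalities into equalities, so $\z$ is feasible and lies on the boundary of each half-space (this also makes $C$ nonempty, consistent with the stated hypothesis). Then, by the second projection theorem \cite[Theorem~9.8]{Beck2014}, $\z=P_C(\hat{x}\e)$ holds if and only if $\trans{(\hat{x}\e-\z)}(\u^1-\z)\le 0$ for every $\u^1\in C$. Expanding this inner product gives
$$
  \trans{(\hat{x}\e-\z)}(\u^1-\z) = \hat{x}\left(\trans{\e}\u^1-\trans{\e}\z\right) - \left(\trans{\z}\u^1-\|\z\|^2\right),
$$
and for any $\u^1\in C$ the first parenthesis is $\le 0$ by the first constraint (so, multiplied by $\hat{x}\ge 0$, the whole term is $\le 0$), while the second parenthesis is $\ge 0$ by the second constraint (so its negative is $\le 0$). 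Hence the expression is $\le 0$ throughout $C$, which proves $\z=P_C(\hat{x}\e)$, i.e. $(\u^1)^*=\z$ solves \eqref{eq:oneneuron_second_stage} with $\Phi(\z)=\|\z-\hat{x}\e\|^2$.

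There is no deep obstacle here; the only delicate point is the sign bookkeeping, and it is precisely the two hypotheses that make it go through. The nonnegativity $\hat{x}\ge 0$ is what preserves the first term's sign after scaling, and $b^0=0$ is what removes the shift $b^0\e$ so that the second active constraint $\trans{\z}\u^1\ge\|\z\|^2$ passes through $\z$ itself; without either, the residual $\hat{x}\e-\z$ would not decompose favorably along the two active normals. As an equivalent and equally short route, one could instead check the KKT conditions of this convex quadratic program directly: the stationarity equation $2(\z-\hat{x}\e)+\mu_1\e-\mu_2\z=\0$ is satisfied by $\mu_1=2\hat{x}\ge 0$ and $\mu_2=2\ge 0$, and complementary slackness is automatic because both constraints are active at $\z$.
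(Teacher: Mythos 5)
Your proof is correct, and it uses the same key tool as the paper --- the second projection theorem from \cite[Theorem~9.8]{Beck2014} applied to the closed convex set cut out by the two (half-space) constraints --- but you apply it uniformly, whereas the paper splits into three cases according to the value of $\trans{\e}\z$ (namely $z_1=\|\z\|_2$, $z_1=0$ with $\z\neq\0$, and $0<z_1<\|\z\|_2$, after normalizing $\e=\e^1$) and only invokes the projection theorem in the last case, handling the first two by separate geometric arguments about the half-spaces $H_1$ and $H_2$. Your single computation
\[
\trans{(\hat{x}\e-\z)}(\u^1-\z) \;=\; \hat{x}\left(\trans{\e}\u^1-\trans{\e}\z\right) \;-\; \left(\trans{\z}\u^1-\|\z\|^2\right)\;\le\;0
\]
is exactly the estimate the paper uses in its third case, and you correctly observe that nothing in it requires $0<\trans{\e}\z<\|\z\|_2$: the first term is nonpositive because $\hat{x}\ge 0$ and the first constraint holds, the second because of the second constraint, and both remain valid in the degenerate situations (including $\z=\0$, where the second constraint is vacuous and the second parenthesis is identically zero). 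Feasibility of $\z$ is immediate since both constraints are tight at $\u^1=\z$. So your argument subsumes the paper's case analysis and is genuinely shorter; the paper's version buys some geometric intuition (illustrated in its Figure~2) at the cost of length. Your closing KKT remark is also a valid alternative certificate, since the problem is convex and KKT conditions are sufficient there without a constraint qualification.
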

\begin{proof}
    Let $\FC$ be the feasible set of \eqref{eq:oneneuron_second_stage}.
    Without loss of generality, we may assume that $\e = \e^1$.
    Using this and $b^0 = 0$, the first constraint in
    \eqref{eq:oneneuron_second_stage} is 
    $z_1 \ge u_1^1$,
    where $z_1$ is the first element of $\z$.
    Thus, \eqref{eq:oneneuron_second_stage} can be equivalently rewritten as
    \begin{equation} \label{eq:oneneuron_second_stage_e1}
        \begin{array}{rl}
            \min\limits_{\u^1} & \|\u^1 - \hat{x}\e^1\|_2^2 \\
            \subto
                 & z_1 \geq u_1^1, \quad \left\|\z\right\|_2^2 \leq \trans{\left(\u^1\right)}\z.
        \end{array}
    \end{equation}
    It  suffices to show that $\z$ is an optimal solution of \eqref{eq:oneneuron_second_stage_e1}.    
    For the boundaries of two constraints in \eqref{eq:oneneuron_second_stage_e1},
        we define $H_1 \coloneqq \{ z_1\} \times \Real^{p-1}$
        and % $H_2 \coloneqq \left\{ \u^1 \in \Real^{p} \,\middle|\, \trans{\left(\u^1 - \z\right)}\z = 0 \right\}$,
        $H_2 \coloneqq \left\{ \u^1 \in \Real^{p} \,\middle|\, \left\|\z\right\|_2^2 = \trans{\left(\u^1\right)}\z \right\}$
        where $z_1$ is the first element of $\z$.
    From the constraint $\left\|\z\right\|_2^2 = \trans{\left(\u^1\right)}\z$ in $H_2$,
        the hyperplane $H_2$ is perpendicular to $\z$ at $\z$ unless $\z = \0$.

    We have $\trans{\e}\z \geq 0$ from the constraint $\trans{\e}\v^1 \geq 0$ in \eqref{eq:oneneuron_first_stage},
    which indicates $z_1 \ge 0$.
    On the other hand, $|z_1| \le \left\|\z\right\|_2^2$ holds generally.
    Therefore, $0 \leq z_1 \leq \left\|\z\right\|_2$ follows.
    The proof is presented for three cases, depending on the value of $z_1$.

    First, suppose that $z_1 = \|\z\|_2$.
    This implies that there exists $\lambda \in \Real_+$ such that $\z = \lambda\e^1$.
    If $\lambda = 0$, then $\z = \0$.
    While the second constraint $\left\|\z\right\|_2^2 \leq \trans{\left(\u^1\right)}\z$ vanishes when $\z = \0$,
        the first constraint $z_1 \ge u_1^1$ requires $u_1^1 \le 0$, thus $\u^1 \in H_1^- := \left(-\Real_+\right) \times \Real^{p-1}$. Here $H_1^-$ is the half space below $H_1$.
    Since the vector $\e^1$ is perpendicular to $H_1^-$ at the origin $O$ and $\hat{x} \ge 0$, the point $\0$ is closest to $\hat{x}\e^1$ in $H_1^-$.
    Thus, $(\u^1)^* = \z = \0$ under $\lambda = 0$.
    For $\lambda > 0$, from the second constraint $\left\|\z\right\|_2^2 \leq \trans{\left(\u^1\right)}\z$ of \eqref{eq:oneneuron_second_stage_e1},
        we obtain $\trans{\z}\left(\lambda\e^1\right) \leq \trans{\left(\u^1\right)}\lambda\e^1$, and this is 
        equivalent to $z_1 \le u_1^1$.
    Combining  with the first constraint, 
        we deduce that the feasible set of \eqref{eq:oneneuron_second_stage} is
        $\left\{ \u^1 \in \Real^p \,\middle|\, u^1_1 = z_1 \right\}$ which is equal to $H_1$.
    Since $\hat{x}\e^1$ is perpendicular to $H_1$ at $\z$,
        the point $\z$ is an optimal solution $(\u^1)^*$ of \eqref{eq:oneneuron_second_stage_e1}.

    Second, suppose $z_1 = 0$ with $\z \neq \0$.
    Then, the first constraint requires that $\u^1 \in H_1^-$, which is the half space below $H_1$. 
    From the second constraint and $z_1 = 0$, $\FC$ is a subset of
    $$
        H_2^+ \coloneqq \left\{ \u^1 \in \Real^{p} \,\middle|\, u^1_1: \text{free variable}, \; \sum_{i=2}^p z_i\left(u^1_i - z_i\right) \geq 0 \right\},
    $$
    which is the half-space above $H_2$.
    Then, for any point $\u^1 \in \FC \subseteq H_1^- \cap H_2^+$, we can compute the objective function of \eqref{eq:oneneuron_second_stage_e1} as
    \begin{equation} \label{eq:oneneuron_obj}
        \left\| \u^1 - \hat{x}\e^1 \right\|_2^2
        = \left| u^1_1 - \hat{x} \right|^2 + \sum_{i = 2}^p \left| u^1_i \right|^2
        = \left| u^1_1 - \hat{x} \right|^2 + \left\| \u^1_{\{2,\ldots,p\}} \right\|_2^2.
    \end{equation}
    Thus, the first element of $\u^1$ and the other elements can be separately determined to minimize \eqref{eq:oneneuron_second_stage_e1}.
    Since $\hat{x} \geq 0$ and $0 = z_1 \geq u^1_1$, the term $\left| u^1_1 - \hat{x} \right|^2$  of \eqref{eq:oneneuron_obj} attains the smallest value when $u^1_1 = 0$.
    The last term $\left\| \u^1_{\{2,\ldots,p\}} \right\|_2^2$ is to find a point $\u^1$ nearest to the origin $O$ on $H_1 \cap H_2^+$.
    Since $H_2$ is perpendicular to $\z$ at $\z \in H_1 \cap H_2^+$, the nearest point to the origin $O$ is also $\z$.
    Hence, $\z$ is an optimal solution (see Figure~\ref{fig:projection_in_proof}(a)).
    
    Lastly, suppose that $0 < z_1 < \|\z\|$.
    Then, $z_2^2 + \cdots + z_n^2 > 0$ holds.
    We also have $\hat{x}\e^1 \not\in \FC$, because %due to the fact that
    if $\u^1 = \hat{x}\e^1$ satisfies the first constraint $z_1 \ge u_1^1$, then $z_1 \geq \hat{x}$ and
    $$
        \left\|\z\right\|_2^2 - \trans{\left(\hat{x}\e^1\right)}\z = z_1\left(z_1 - \hat{x}\right) + z_2^2 + \cdots + z_n^2 > 0,
    $$
    which implies that the second constraint $\left\|\z\right\|_2^2 \leq \trans{\left(\u^1\right)}\z$ does not hold.
    We now consider the projection of $\hat{x}\e^1$ onto $\FC$ in \eqref{eq:oneneuron_second_stage_e1}.
    For any $\u^1 \in \FC$, we have
    \begin{align*}
        \trans{\left(\u^1 - \z\right)}\left(\hat{x}\e^1 - \z\right)
            &= \trans{\left(\u^1 - \z\right)}\hat{x}\e^1 - \trans{\left(\u^1 - \z\right)}\z \\
            &\leq \hat{x}\left(u^1_1 - z_1\right)
            \leq 0. 
    \end{align*}
    The first inequality follows from the second constraint $\left\|\z\right\|_2^2 \leq \trans{\left(\u^1\right)}\z$,
        and the second inequality follows from the first constraint $z_1 \ge u_1^1$.
    Furthermore, $z_1 \ge z_1$ and 
    $\left\|\z\right\|_2^2 \leq \trans{\left(\z\right)}\z$ ensure
    $\z \in \FC$.
    By the second projection theorem~\cite[Theorem~9.8]{Beck2014},
        $\z$ is the projection of $\hat{x}\e$ onto $\FC$, therefore, $\z$ is the optimal solution on $\FC$
        (see Figure~\ref{fig:projection_in_proof}(b)).
\end{proof}

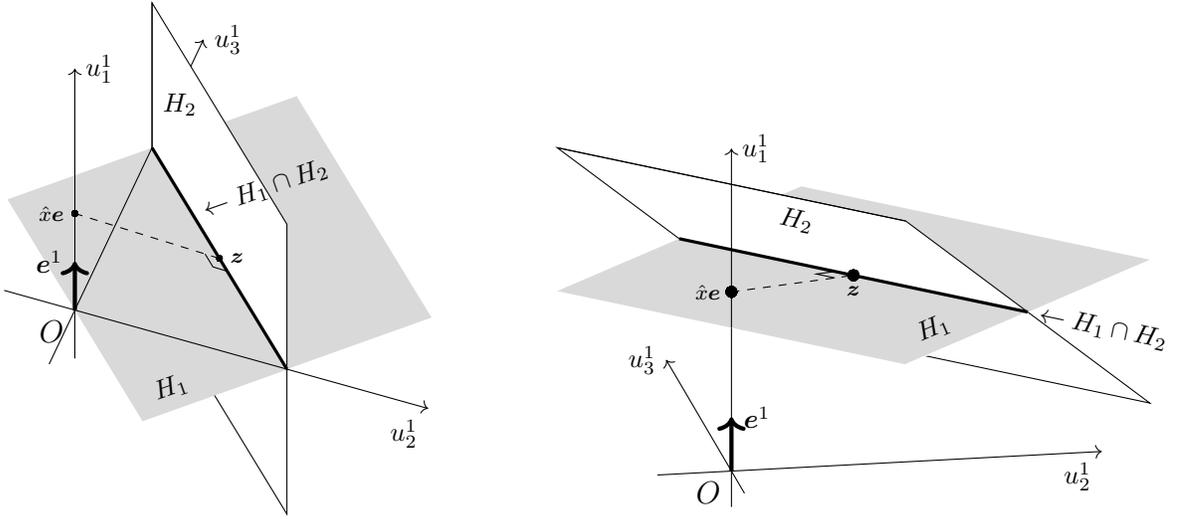
\begin{figure}[!t]
    %Ref. https://tikz.net/nonconvex-solid/
    \centering
    \begin{minipage}{0.47\linewidth}
        \centering
        \tdplotsetmaincoords{40}{20}
        \begin{tikzpicture}[tdplot_main_coords,scale=1.0]
            \coordinate (O) at (0,0,0);
            \coordinate (Z) at (1.5,1.5,0);
            \coordinate (A) at (1.5,4.5,0);
            \coordinate (B) at (4.5,1.5,0);
            \coordinate (C) at (1.5,-1.5,0);
            \coordinate (D) at (-1.5,1.5,0);

            \coordinate (AB) at (1.5-1.5,1.5+1.5,0);
            \coordinate (CD) at (1.5+1.5,1.5-1.5,0);

            \coordinate (E) at ($(AB) + (0, 0, 3)$);
            \coordinate (F) at ($(AB) + (0, 0,-3)$);
            \coordinate (G) at ($(CD) + (0, 0,-3)$);
            \coordinate (H) at ($(CD) + (0, 0, 3)$);

            \fill[white] (AB) -- (F) -- (G) -- (CD) -- (AB);
            \draw[black] (AB) -- (F) -- (G) -- (CD);
            \fill[gray!30!white] (A) -- (B) -- (C) -- (D) -- (A);
            \draw[gray!30!white] (CD) -- (C) node[black,above,sloped,pos=0.75]{\footnotesize$H_1$};
            
            \draw[thin,->] (0,0,0) -- (0,5,0) node [right] {\footnotesize$u^1_3$};      % u3+
            
            \fill[white] (AB) -- (E) -- (H) -- (CD) -- (AB);
            \draw[black] (AB) -- (E) -- (H) -- (CD) -- (AB);
            \draw[black] (AB) -- (E) node[black,right,sloped,pos=0.3,rotate=-90]{\footnotesize$H_2$};

            % Coordinate axis
            \draw[thin,->] (0,0,0) -- (5,0,0) node [below left] {\footnotesize$u^1_2$}; % u2+
            \draw[thin] (0,0,0) -- (-1,0,0);                                            % u2-
            \draw[thin] (0,0,0) -- (0,-1,0);                                            % u3-
            \draw[thin,->] (0,0,-1) -- (0,0,5) node [right] {\footnotesize$u^1_1$};   % u1
            \draw[ultra thick,->] (0,0,0) -- (0,0,1) node [left] {\footnotesize$\e^1$};

            \draw[black,very thick] (AB) -- (CD) node[black,right,sloped,pos=0.3,rotate=78]{\footnotesize$\leftarrow H_1 \cap H_2$};
            \draw plot [mark=*, mark size=1] (Z) node [right] {\scriptsize$\z$};
            
            \tdplotsetcoord{xhat}{2}{0}{90}
            \draw plot [mark=*, mark size=1] (xhat) node [left] {\scriptsize$\hat{x}\e$};
            \draw[dashed] (xhat) -- (Z);
            \pic[draw,angle radius=2mm]{right angle=xhat--Z--CD};

            \draw (O) node[below left] {$O$};
        \end{tikzpicture}
        \subcaption{
            $\trans{\left(\e^1\right)}\z = 0$ case.
            The feasible set $\FC$ is the set of points on the gray plane and to the right of the white plane.
        }
    \end{minipage}
    \hfill
    \begin{minipage}{0.49\linewidth}
        \centering
        \tdplotsetmaincoords{72.5}{-10}
        \vspace{8.9ex}
        \begin{tikzpicture}[tdplot_main_coords,scale=1.0]
            \coordinate (Z) at (2,2,2);
            \coordinate (A) at ( 6, 2,2);
            \coordinate (B) at ( 2,-2,2);
            \coordinate (C) at (-2, 2,2);
            \coordinate (D) at ( 2,6,2);
            \coordinate (E) at (2,-2,4);
            \coordinate (F) at (-2,2,4);
            \coordinate (G) at (2,6,0);
            \coordinate (H) at (6,2,0);
            \coordinate (AB) at ($(A)!0.5!(B)$);
            \coordinate (CD) at ($(C)!0.5!(D)$);
            \coordinate (EF) at ($(E)!0.5!(F)$);
            \coordinate (GH) at ($(G)!0.5!(H)$);
            
            % white <= 2
            \draw[black] (CD) -- (G) -- (H) -- (AB);
            
            % Coordinate axis
            \draw[thin,->] (0,0,0) -- (5,0,0) node [below left] {\footnotesize$u^1_2$}; % u2+
            \draw[thin] (0,0,0) -- (-1,0,0);                                            % u2-
            \draw[thin,->] (0,0,0) -- (0,5,0) node [left] {\footnotesize$u^1_3$};      % u3+
            \draw[thin] (0,0,0) -- (0,-1,0);                                            % u3-
            \draw[thin] (0,0,-0.5) -- (0,0,2);   % u1 <= 2
            \draw[ultra thick,->] (0,0,0) -- (0,0,0.75) node [right] {\footnotesize$\e^1$};
            
            % gray
            \fill[gray!30!white] (A) -- (B) -- (C) -- (D) -- (A);
            
            % white >= 2
            \fill[white] (CD) -- (F) -- (E) -- (AB) -- (CD);
            \draw[black] (CD) -- (F) -- (E) -- (AB);
            
            %\draw[black] (E) -- (F) -- (G) -- (H) -- cycle;
            
            \draw[thin,->] (0,0,2) -- (0,0,4.5) node [right] {\footnotesize$u^1_1$};   % u1 >= 2
            
            \draw[white] (AB) -- (B) node[black,above,sloped,pos=0.7]{\footnotesize$H_1$};
            \draw[black] (E) -- (F) node[black,below,sloped,pos=0.3]{\footnotesize$H_2$};
            \draw[black,very thick] (AB) -- (CD) node[black,right,sloped,pos=0.0,rotate=0]{\footnotesize$\leftarrow H_1 \cap H_2$};
            
            \draw plot [mark=*, mark size=2] (Z) node [below] {\scriptsize$\z$}; 

            \tdplotsetcoord{xhat}{2.5}{0}{90}
            \draw plot [mark=*, mark size=2] (xhat) node [left] {\scriptsize$\hat{x}\e$};
            \draw[dashed] (xhat) -- (Z);
            \pic[draw,angle radius=2.5mm]{right angle=xhat--Z--CD};
            
            \draw (O) node[below left] {$O$};
        \end{tikzpicture}
        \subcaption{
        $0 < \trans{\left(\e^1\right)}\z < \|\z\|$ case.
        The feasible set $\FC$ is the set of points below the gray plane and above the white plane.
        }
    \end{minipage}
    \caption{%
        Projection of $\hat{x}\e$ to $\z$ in the case $p = 3$.
        The gray and white planes denote $H_1$ and $H_2$, respectively. 
        The bold line represents the intersection of them.
        }
    \label{fig:projection_in_proof}    
\end{figure}

% A similar result for \eqref{eq:oneneuron_second_stage} with $b^0 \neq 0$ can be derived using the change of variables.
We note that the result above can be extended to a more general case 
$b^0 \neq 0$ by shifting the variables.
\begin{prop} \label{prop:value_second_stage}
    Suppose that $\e \in \Real^p$ satisfies $\|\e\| = 1$, $\z \in \Real^p$ is a feasible point of \eqref{eq:oneneuron_first_stage},
        $\hat{x} \geq -b^0$,
        and the feasible set of \eqref{eq:oneneuron_second_stage} is nonempty.
    Then, $\left(\u^1\right)^* \coloneqq \z - b^0\e$ is a solution of \eqref{eq:oneneuron_second_stage}.
    In addition, $\Phi(\z) = \left\|\z - \left(b^0 + \hat{x}\right)\e\right\|^2$ follows.
\end{prop}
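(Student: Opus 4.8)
The plan is to reduce \cref{prop:value_second_stage} to \cref{lem:value_second_stage_lemma} by the affine change of variables suggested in the preceding remark, namely $\tilde{\u}^1 \coloneqq \u^1 + b^0\e$. Since the parameter $\z$ and the direction $\e$ are untouched by this substitution, the first-stage problem \eqref{eq:oneneuron_first_stage} and the feasibility of $\z$ are unaffected, so I only need to rewrite the second-stage problem \eqref{eq:oneneuron_second_stage}.

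First I would substitute $\u^1 = \tilde{\u}^1 - b^0\e$ into \eqref{eq:oneneuron_second_stage}. The two constraints $\trans{\e}\z \geq \trans{\e}\left(\u^1 + b^0\e\right)$ and $\|\z\|_2^2 \leq \trans{\left(\u^1 + b^0\e\right)}\z$ become $\trans{\e}\z \geq \trans{\e}\tilde{\u}^1$ and $\|\z\|_2^2 \leq \trans{\left(\tilde{\u}^1\right)}\z$, respectively, while the objective transforms as $\|\u^1 - \hat{x}\e\|_2^2 = \|\tilde{\u}^1 - \left(b^0 + \hat{x}\right)\e\|_2^2$. Thus the transformed program is exactly an instance of the second-stage problem \eqref{eq:oneneuron_second_stage} in which the effective bias is $0$ and the target center is $\left(b^0 + \hat{x}\right)\e$ in place of $\hat{x}\e$.

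Next I would check that this instance satisfies every hypothesis of \cref{lem:value_second_stage_lemma}: the unit vector $\e$ and the feasible point $\z$ are inherited directly, the effective bias is $0$, the nonemptiness of the feasible set is preserved because $\tilde{\u}^1 \mapsto \tilde{\u}^1 - b^0\e$ is an affine bijection between the feasible sets of the original and transformed programs, and the sign hypothesis $\hat{x} \geq 0$ of the lemma translates into $b^0 + \hat{x} \geq 0$, which is precisely the assumption $\hat{x} \geq -b^0$ of the proposition. Applying \cref{lem:value_second_stage_lemma} then yields $\left(\tilde{\u}^1\right)^* = \z$ as an optimal solution with optimal value $\|\z - \left(b^0 + \hat{x}\right)\e\|^2$. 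Undoing the substitution gives $\left(\u^1\right)^* = \z - b^0\e$ and $\Phi(\z) = \|\z - \left(b^0 + \hat{x}\right)\e\|^2$, as claimed.

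The argument is essentially a bookkeeping reduction, so there is no genuinely hard step; the only point requiring care is confirming that the affine shift carries feasible points to feasible points in \emph{both} directions, so that nonemptiness and, more importantly, optimality transfer without loss, and that the hypothesis $\hat{x} \geq -b^0$ is exactly what meets the sign condition of the lemma after shifting. I would make these two checks explicit rather than leave them implicit.
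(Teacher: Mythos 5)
Your proposal is correct and follows essentially the same route as the paper's proof: the shift $\tilde{\u}^1 = \u^1 + b^0\e$, $\tilde{x} = \hat{x} + b^0 \geq 0$, followed by an application of Lemma~\ref{lem:value_second_stage_lemma} and undoing the substitution. Your extra care in noting that the affine shift is a bijection between the feasible sets (so optimality transfers in both directions) is a reasonable explicit addition that the paper leaves implicit.
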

\begin{proof}
    Let $\tilde{\u}^1 \coloneqq \u^1 + b^0\e$ and $\tilde{x} \coloneqq \hat{x} + b^0 \geq 0$.
    Then, \eqref{eq:oneneuron_second_stage} is rewritten as
    $$
        \begin{array}{rl}
            \min\limits_{\tilde{\u}^1} & \|\tilde{\u}^1 - \tilde{x}\e\|_2^2 \\
            \subto
                 & \trans{\e}\z \geq \trans{\e}\tilde{\u}^1, \quad \left\|\z\right\|_2^2 \leq \trans{\left(\tilde{\u}^1\right)}\z.
        \end{array}
    $$
    Since $\tilde{x} \geq 0$,
        the above problem has an optimal solution $\left(\tilde{\u}^1\right)^* \coloneqq \z$ by Lemma~\ref{lem:value_second_stage_lemma}.
    By definition of $\tilde{\u}^1$,
        the point $\z - b^0\e$ is a solution $(\u^1)^*$ of \eqref{eq:oneneuron_second_stage}.
\end{proof}
\noindent
% It is easy to check that Proposition \ref{prop:value_second_stage} is a generalization of Lemma~\ref{lem:value_second_stage_lemma}.

Using Proposition~\ref{prop:value_second_stage},
    the first-stage problem~\eqref{eq:oneneuron_first_stage} can be written as
\begin{equation} \label{eq:oneneuron_first_stage_revised}
    \begin{array}{rl}
        \min\limits_{\v^1} & c\trans{\e}\v^1 \\
        \subto
             & \trans{\e}\v^1 \geq 0, \quad \|\v^1 - \hat{x}\e\|_q \leq \rho.
    \end{array}
\end{equation}
The objective function and the left-hand side of the first constraint are parallel in the direction 
determined with $\trans{\e}\v^1$.
Thus, it is easy to solve \eqref{eq:oneneuron_first_stage_revised},
    and there exists a solution $\v^1$ which is collinear with $\e$.
\begin{theorem} \label{thm:tightness_oneneuron}
    Let $\e$ be an arbitrary unit vector with $\|\e\| = 1$.
    Suppose $\hat{x} \geq -b^0$.
    There exists an optimal solution $((\u^1)^*, (\v^1)^*)$ of \eqref{eq:nc_interpretation_for_oneneuron} such that
        the vectors $(\u^1)^*$ and $(\v^1)^*$ are collinear with $\e$.
    Thus, DeepSDP~\eqref{eq:primal_deepsdp_oneneuron} has a rank-1 solution,
        and it is a tight relaxation of the corresponding QCQP
        \eqref{eq:deepsdp_oneneuron}.
\end{theorem}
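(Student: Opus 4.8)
The plan is to leverage the two-stage decomposition already set up and reduce the claim to an application of Lemma~\ref{lem:rank_one_solution_collinear}. The first point to establish is that \eqref{eq:nc_interpretation_for_oneneuron} is equivalent to the two-stage problem consisting of the outer problem \eqref{eq:oneneuron_first_stage} in the variable $\v^1$ and the inner problem \eqref{eq:oneneuron_second_stage} defining $\Phi$. Since $\u^1$ does not enter the objective $c\trans{\e}\v^1$, a pair $(\u^1,\v^1)$ is feasible for \eqref{eq:nc_interpretation_for_oneneuron} exactly when $\trans{\e}\v^1 \ge 0$ and there is some $\u^1$ meeting the two coupling constraints with $\|\u^1 - \hat{x}\e\|_2^2 \le \rho$, i.e. when $\Phi(\v^1)\le\rho$. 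Thus it suffices to exhibit an optimal $(\v^1)^*$ of the outer problem that is collinear with $\e$, together with the corresponding inner minimizer $(\u^1)^*$, and to check that $(\u^1)^*$ is collinear as well. Because \eqref{eq:nc_interpretation_for_oneneuron} is assumed to admit an optimizer, its feasible set is nonempty, so the inner feasible set is nonempty for the relevant $\v^1$ and Proposition~\ref{prop:value_second_stage} applies.

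First I would invoke Proposition~\ref{prop:value_second_stage}, whose hypothesis $\hat{x} \ge -b^0$ is exactly the standing assumption, to evaluate $\Phi(\v^1) = \|\v^1 - (b^0+\hat{x})\e\|_2^2$ and to identify the inner minimizer $(\u^1)^* = \v^1 - b^0\e$. Substituting this value of $\Phi$ recasts the outer problem as \eqref{eq:oneneuron_first_stage_revised}, whose feasible region is the intersection of the half-space $\trans{\e}\v^1 \ge 0$ with a Euclidean ball whose centre $c_0\e$ (with $c_0 = b^0+\hat{x}$) lies on the line $\Real\e$.

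The collinearity of the outer optimizer is then a short projection argument. Writing any $\v^1 = t\e + \w$ with $t \coloneqq \trans{\e}\v^1$ and $\w \perp \e$, the objective $c\,t$ and the constraint $t \ge 0$ depend only on $t$, while $\|\v^1 - c_0\e\|_2^2 = (t - c_0)^2 + \|\w\|_2^2$ by orthogonality and $\|\e\| = 1$. Hence replacing $\v^1$ by its projection $t\e$ onto $\Real\e$ leaves the objective and the first constraint unchanged and can only decrease the ball constraint, so $t\e$ is feasible with the same value. Applying this to an optimizer yields an optimal $(\v^1)^* = t^*\e$ collinear with $\e$. Setting $(\u^1)^* = (\v^1)^* - b^0\e = (t^* - b^0)\e$ gives, by Proposition~\ref{prop:value_second_stage} with $\z = (\v^1)^*$, the inner minimizer; since it is again a scalar multiple of $\e$, both components of the pair $((\u^1)^*,(\v^1)^*)$ are collinear with $\e$, and this pair is optimal for \eqref{eq:nc_interpretation_for_oneneuron}.

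Finally, since \eqref{eq:nc_interpretation_for_oneneuron} and \eqref{eq:primal_deepsdp_oneneuron} are the instances of \eqref{eq:general_nc_interpretation} and \eqref{eq:general_rcsdp} with $p = 3$, condition~(ii) of Lemma~\ref{lem:rank_one_solution_collinear} holds, so \eqref{eq:primal_deepsdp_oneneuron} admits a rank-1 solution and is a tight relaxation, as asserted. The analytically delicate work, namely the second-stage projection analysis across the three regimes of $\trans{\e}\z$, is entirely contained in Lemma~\ref{lem:value_second_stage_lemma} and Proposition~\ref{prop:value_second_stage}; the only point I expect to require care here is the bookkeeping for the two-stage equivalence, specifically verifying that the inner minimizer supplied by the Proposition is genuinely feasible for the full problem and does not raise the objective, rather than any new estimate.
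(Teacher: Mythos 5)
Your proposal is correct and follows essentially the same route as the paper's proof: reduce to the two-stage decomposition, use Proposition~\ref{prop:value_second_stage} (whose hypothesis $\hat{x}\ge -b^0$ is the standing assumption) to collapse the second stage and obtain $(\u^1)^*=\v^1-b^0\e$, argue that the first-stage optimizer can be taken on the line $\Real\e$ because the objective and the sign constraint depend only on $\trans{\e}\v^1$ while projecting onto $\Real\e$ only shrinks the ball constraint, and conclude via Lemma~\ref{lem:rank_one_solution_collinear}. Your orthogonal-decomposition argument for the first stage and your explicit check of the two-stage equivalence are slightly more careful than the paper's line-segment phrasing, but they are the same idea.
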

\begin{proof}
    Without loss of generality, we may assume that $\e = \e^1$.
    Then, the objective function and the first constraint depend only on the value of $v^1_1$.
    The point $\trans{[\hat{x} + \rho, \trans{\0}]}$ attains the maximum value of $v^1_1$ on the feasible set when $c>0$,
    while $\trans{[\hat{x} - \rho, \trans{\0}]}$ attains the minimum value.
    It suffices to consider the points $\v^1$ on
        the line segment between $\trans{[\hat{x} + \rho, \trans{\0}]}$ and $\trans{[\hat{x} - \rho, \trans{\0}]}$.
    Hence, $\v^1$ and $\e^1$ are collinear, {\it i.e.,} $\v^1 = v^1_1 \e^1$.
    By Proposition~\ref{prop:value_second_stage}, $\u^1$ is also collinear with $\e^1$.
    Applying Lemma~\ref{lem:rank_one_solution_collinear}, we conclude that \eqref{eq:primal_deepsdp_oneneuron} has a rank-1 solution.
\end{proof}

%\input sect5.tex
%!TEX root = ./main.tex
\section{Tight DeepSDP for a single-layer neural network} \label{sec:deepsdp_onelayer}

%In this section, we consider 
We discuss the tightness of DeepSDP~\eqref{eq:introduction_deepsdp}  for a single-layer NN ($L = 1$). % to begin from a simple case of NNs.
% The NN is called a one-layer NN if the number of hidden %layer is $1$ ($L = 1$), 
More precisely,
$$
    f(\x^0) = W^1\phi\left(W^0 \x^0 + \b^0\right) + \b^1.
$$
In this case, the matrix functions in DeepSDP~\eqref{eq:introduction_deepsdp} can be rewritten as
\begin{align*}
    M_\mathrm{in}(P) &= \trans{\begin{bmatrix} 1 & \trans{\0} & \trans{\0} \\ \0 & I_{n_0} & O \end{bmatrix}} P \begin{bmatrix} 1 & \trans{\0} & \trans{\0} \\ \0 & I_{n_0} & O \end{bmatrix} \in \SymMat^{1 + n_0 + n_1}, \\
    M_\mathrm{mid}(Q) &= \trans{\begin{bmatrix} 1 & \trans{\0} & \trans{\0} \\ \b^0 & W^0 & O \\ \0 & O & I_{n_1} \end{bmatrix}} Q \begin{bmatrix} 1 & \trans{\0} & \trans{\0} \\ \b^0 & W^0 & O \\ \0 & O & I_{n_1} \end{bmatrix} \in \SymMat^{1 + n_0 + n_1}, \\
    M_\mathrm{out}(S) &= \trans{\begin{bmatrix} 1 & \trans{\0} & \trans{\0} \\ \0 & I_{n_0} & O \\ \b^1 & O & W^1 \end{bmatrix}} S \begin{bmatrix} 1 & \trans{\0} & \trans{\0} \\ \0 & I_{n_0} & O \\ \b^1 & O & W^1 \end{bmatrix} \in \SymMat^{1 + n_0 + n_1}.
\end{align*}
We introduce the following assumptions.
% Assumptions~\ref{asm:polytope_safety_set} and \ref{asm:no_effect_lastlayer} hold.
%\cref{asm:ReLU_activation,asm:polytope_safety_set,asm:no_repeated_inequalities,asm:no_effect_lastlayer}.
\begin{assum} \label{asm:polytope_safety_set}
    The safety specification set $S_y$ is a polytope.
\end{assum}
\begin{assum} \label{asm:no_effect_lastlayer}
    The last layer is the identity layer, i.e., $W^1=I$ and $\b^1 = \0$.
\end{assum}

\noindent
%At first glance, 
Assumption~\ref{asm:no_effect_lastlayer} may appear to impose a strong restriction on the DeepSDPs considered,
    and potentially limit  the generalization of the analysis.
    %and it is an obstacle to the generalization of their analysis.
However, it does not  change the class of the DeepSDPs defined under Assumption~\ref{asm:polytope_safety_set}.
When $S_y$ is obtained from the DeepSDP with $W_1 = I$,
    the safety specification set of the original DeepSDP is the projection of $S_y$ by the original $W^1$, {\it i.e.,}
$$
    \left\{ W^1 y + \b_1 \,\middle|\, y \in S_y \right\},
$$
which is also a polytope.
In addition, Assumption~\ref{asm:no_effect_lastlayer} induces $M_\mathrm{out}(S) = S$.

The repeated nonlinearities~\eqref{eq:sloperestriction_relu_yy}, described in \cref{sssec:repeated_nonlinearities},
    are redundant constraints in the corresponding QCQP~\eqref{eq:qcqp_for_deepsdp}.
Thus, if the SDP relaxation of \eqref{eq:qcqp_for_deepsdp} without \eqref{eq:sloperestriction_relu_yy} is tight, 
    then that of \eqref{eq:qcqp_for_deepsdp} with \eqref{eq:sloperestriction_relu_yy} is also tight.
% Without loss of generality, we can
We impose the following assumption in the proofs to show the tightness of DeepSDP~\eqref{eq:introduction_deepsdp}.
\begin{assum} \label{asm:no_repeated_inequalities}
DeepSDP~\eqref{eq:introduction_deepsdp} has no repeated nonlinearities.
\end{assum}

In \cref{ssec:polytope_safety_set}, we describe the matrix set $\SC$ according to \cref{asm:Sy_poly}.
Subsequently, we discuss the tightness of DeepSDP~\eqref{eq:introduction_deepsdp}
    if $\XC$ is an ellipsoid in \cref{ssec:ellipsoid_input_onelayer_deepsdp} and a rectangle in \cref{ssec:box_input_onelayer_deepsdp}.

%-------------------------------------------------------------------------------------------------------
\subsection{Safety specification sets for polytopes} \label{ssec:polytope_safety_set}

%In this subsection, 
We discuss the safety specification set $S_y$ for estimating a polytope.
Since any polytope can be represented by the intersection of a finite number of half spaces,
we express $S_y$ with
\begin{equation} \label{eq:polytope_inequality_safeset}
    S_y = % \subseteq 
    \bigcap\limits_{\ell=1}^M \left\{ \y \in \Real^{n_L+1} \,\middle|\,
        \trans{(\c^{\ell})}\y - d_{\ell} \geq 0
        \right\},
\end{equation}
where $\c^1, \ldots, \c^M \in \Real^{n_{L+1}}$ and $d_1, \dots, d_M \in \Real$.
The objective of DeepSDP~\eqref{eq:introduction_deepsdp} in this section is to optimize $d_1, \dots, d_M$ % for minimizing $S_y$ 
 by fixing the half-space directions $\c^1,\dots, \c^M$, % we optimize $d_1, \dots, d_M$ 
 in order to sufficiently minimize $S_y$. % enough.
Since the $\ell$th half-space depends only on $\c^\ell$ and $d_\ell$,
    each half-space consisting of \eqref{eq:polytope_inequality_safeset} can be separately considered. %these half-spaces are separable.
% The polytope in \eqref{eq:polytope_inequality_safeset} can be recovered
%     by separately computing $M$ half-spaces from $M$ DeepSDPs \eqref{eq:introduction_deepsdp} for each $\c^\ell$
%     and taking their intersection.
% %Thus, to save on the increase in computation space and to avoid the computational error large,
% We assume that $S_y$ is a half-space defined by a single linear constraint $\trans{\c}\y \geq d$,
%     and focus on DeepSDP \eqref{eq:introduction_deepsdp} with a simple half-space safety specification set
%     $\left\{ \y \in \Real^{n_L} \,\middle|\, \trans{\c}\y - d \geq 0 \right\}$,
%    to  increase  computational efficiency and  minimize  large computational errors.
An inequality of the form $\trans{\c}\y -d \geq 0$ associated with the half space can be rewritten as
\begin{equation} \label{eq:inequality_halfspace_coefficient}
    \trans{\begin{bmatrix} 1 \\ \x^0 \\ \y \end{bmatrix}}
        \begin{bmatrix} -2d & \trans{\0} & \trans{\c} \\  \0 & O & O \\ \c & O & O \end{bmatrix}
        \begin{bmatrix} 1 \\ \x^0 \\ \y \end{bmatrix} \geq 0.
\end{equation}
The polytope-shaped safety specification set can be obtained from all the coefficient matrices of the above inequality:
\begin{equation}
    \SC = \left\{ \begin{bmatrix} -2d & \trans{\0} & \trans{\c} \\  \0 & O & O \\ \c & O & O \end{bmatrix}
        \,\middle|\, d \in \Real
    \right\}.
\end{equation}
% As mentioned in \cref{ssec:formulation_deepsdp},
%     the set $\SC$ is a candidate for the inequality \eqref{eq:inequality_halfspace_coefficient}
%     to define the smallest half-space.
% Finally,  
Using this $\SC$, the resulting DeepSDP is %for the half-space safety specification and an input set $\XC$ is
\begin{equation} \label{eq:deepsdp_onelayer}
    \begin{array}{rl}
        \max\limits_{P, \boldsymbol{\lambda}, \boldsymbol{\nu}, \boldsymbol{\eta}, \boldsymbol{\mu}, d} & 2d \\
        \subto
        & R \coloneqq
            M_\mathrm{in}(P) +
        \begin{bmatrix}
            0 & \trans{\boldsymbol{\nu}}W^0 & - \trans{\boldsymbol{\nu}} - \trans{\boldsymbol{\eta}} \\
            \trans{(W^0)}\boldsymbol{\nu} & O & -\trans{(W^0)}\diag(\boldsymbol{\lambda}) \\
            - \boldsymbol{\nu} - \boldsymbol{\eta} & -\diag(\boldsymbol{\lambda})W^0 & 2\diag(\boldsymbol{\lambda})
            \end{bmatrix} \\
        & \qquad +
          \begin{bmatrix} 0 & \trans{\0} & \trans{\0} \\  \0 & O & - \trans{(W^0)}T \\ \0 & - T W^0 & 2T \end{bmatrix} +
          \begin{bmatrix} -2d & \trans{\0} & \trans{\c} \\  \0 & O & O \\ \c & O & O \end{bmatrix} \succeq O, \\
        & T = \sum\limits_{i = 1}^n \sum\limits_{j = i + 1}^n \mu_{ij}(\e_i - \e_j)\trans{(\e_i - \e_j)}, \quad \mu_{ij} \geq 0, \\
        & P \in \PC_\XC, \quad \boldsymbol{\lambda}, \boldsymbol{\nu}, \boldsymbol{\eta} \in \Real_+^n, \quad d \in \Real,
    \end{array}
\end{equation}
where the objective is to maximize $2d$ as we want to find the largest $d$  
    such that $\trans{\c}\x^1 \geq d$ holds for all $\x^1 \in \XC_1$.
Notice that  \eqref{eq:deepsdp_onelayer} is not an standard SDP due to  the matrix variable $P$.  % which is not an SDP
%Since the matrix variable $P$ still remains,
%    the formulation \eqref{eq:deepsdp_onelayer} includes problems that are not an SDP.
In \cref{ssec:ellipsoid_input_onelayer_deepsdp,ssec:box_input_onelayer_deepsdp},  we show that
%In contrast, 
the precise formulation for $\PC_\XC$ provides an SDP from
\eqref{eq:deepsdp_onelayer}.

%\magenta{
The dual problem of \eqref{eq:deepsdp_onelayer} is
\begin{equation} \label{eq:primal_for_deepsdp_onelayer}
    \begin{array}{rll}
        \min\limits_{\substack{\x^0,\x^1, \\ X^{00},X^{10},X^{11}}} & 2\trans{\c}\x^1 \\
        \subto
             & \ip{M_\mathrm{in}(P)}{G} \leq 0, \; \forall P \in \PC_\XC, \\
             & \ip{\begin{bmatrix}
                    0 & \trans{\0} & -b^0_i \trans{\left(\e^i\right)} \\
                    \0 & O & -\trans{\left(W^0\right)}\e_i\trans{\e_i} \\
                    -b^0_i \e^i & -\e_i\trans{\e_i}W^0 & 2\e_i\trans{\e_i}
                \end{bmatrix}}{G} \leq 0, & i = 1,\ldots,n_1, \\
             & \ip{\begin{bmatrix}
                    2b^0_i & \trans{\e_i}W^0 & -\trans{\e_i} \\
                    \trans{\left(W^0\right)}\e_i & O & O \\
                    -\e_i & O & O
                \end{bmatrix}}{G} \leq 0, & i = 1,\ldots,n_1, \\
             & \ip{\begin{bmatrix}
                    0 & \trans{\0} & -\trans{\e_i} \\
                    \0 & O & O \\
                    -\e_i & O & O
                \end{bmatrix}}{G} \leq 0, & i = 1,\ldots,n_1, \\
             & G \coloneqq \begin{bmatrix}
                1 & \trans{\left(\x^0\right)} & \trans{\left(\x^1\right)} \\
                \x^0 & X^{00} & \trans{\left(X^{10}\right)} \\
                \x^1 & X^{10} & X^{11}
                \end{bmatrix} \in \SymMat_+^{(1+n_0+n_1)},
    \end{array}
\end{equation}
which corresponds to the primal SDP relaxation \eqref{eq:sdp_for_deepsdp} in \cref{ssec:between_tightness_and_deepsdp}.
The repeated nonlinearities~\eqref{eq:sloperestriction_relu_yy} 
are removed here
by Assumption~\ref{asm:no_repeated_inequalities},
thus the constraints corresponding to the dual variables $\mu_{ij}$ and $T$ in \eqref{eq:deepsdp_onelayer}
    do not appear in \eqref{eq:primal_for_deepsdp_onelayer}.
%}
For the fixed vector $\e \in \Real^p$ such that $\|\e\| = 1$,
    define $\u^1,\ldots,\u^{n_0} \in \Real^p$ and $\v^1,\ldots,\v^{n_1} \in \Real^p$
    to substitute $\x^0$ and $\x^1$ with
\begin{gather*}
    \x^0 = \begin{bmatrix} \trans{\e}\u^1 \\ \vdots \\ \trans{\e}\u^{n_0} \end{bmatrix} \in \Real^{n_0}, \quad
    \x^1 = \begin{bmatrix} \trans{\e}\v^1 \\ \vdots \\ \trans{\e}\v^{n_1} \end{bmatrix} \in \Real^{n_1}, \quad
    X^{00} = \begin{bNiceMatrix}[margin]
        \trans{\left(\u^1\right)}\u^1 & \Cdots & \trans{\left(\u^1\right)}\u^{n_0} \\
        \Vdots & \Ddots & \Vdots \\
        \trans{\left(\u^{n_0}\right)}\u^1 & \cdots & \trans{\left(\u^{n_0}\right)}\u^{n_0}
    \end{bNiceMatrix} \in \SymMat^{n_0}, \\
    X^{10} = \begin{bNiceMatrix}[margin]
        \trans{\left(\v^1\right)}\u^1 & \Cdots & \trans{\left(\v^1\right)}\u^{n_0} \\
        \Vdots & \Ddots & \Vdots \\
        \trans{\left(\v^{n_1}\right)}\u^1 & \cdots & \trans{\left(\v^{n_1}\right)}\u^{n_0}
    \end{bNiceMatrix} \in \Real^{n_1 \times n_0}, \quad
    X^{11} = \begin{bNiceMatrix}[margin]
        \trans{\left(\v^1\right)}\v^1 & \Cdots & \trans{\left(\v^1\right)}\v^{n_1} \\
        \Vdots & \Ddots & \Vdots \\
        \trans{\left(\v^{n_1}\right)}\v^1 & \cdots & \trans{\left(\v^{n_1}\right)}\v^{n_1}
    \end{bNiceMatrix} \in \SymMat^{n_1}.
\end{gather*}
Then, from \eqref{eq:primal_for_deepsdp_onelayer}, we have the following nonlinear formulation:
%\todo{\eqref{eq:nc_interpretation_for_onelayer_eithercondition} may be $\left\|\v^i\right\|_2^2 \leq {\textstyle \trans{\left(\sum_{j=1}^{n_0} W_{ij}\v^j \right)}\u^i}$.}
\begin{subequations} \label{eq:nc_interpretation_for_onelayer}
    \begin{alignat}{2}
        \min\limits_{\u^j,\v^i} \ \ & {\textstyle 2\sum_{i=1}^{n_1} c_i \, \trans{\e}\v^i} \label{eq:nc_interpretation_for_onelayer_objective} \\
        \subto \ \
             & \trans{\e}\v^i \geq 0, \label{eq:nc_interpretation_for_onelayer_nonnegative} \\
             & \trans{\e}\v^i \geq {\textstyle \trans{\e}\left(\sum_{j=1}^{n_0} W_{ij}\u^j + b^0_i\e \right)}, \quad & & i = 1,\ldots,n_1, \label{eq:nc_interpretation_for_onelayer_value} \\
             & \left\|\v^i\right\|_2^2 \leq {\textstyle \trans{\left(\sum_{j=1}^{n_0} W_{ij}\u^j + b^0_i\e \right)}\v^i}, \quad & & i = 1,\ldots,n_1, \label{eq:nc_interpretation_for_onelayer_eithercondition} \\
             & \ip{M_\mathrm{in}(P)}{G} \leq 0, \; \forall P \in \PC_\XC. \label{eq:nc_interpretation_for_onelayer_inputset}
    \end{alignat}
\end{subequations}
The matrix $G$ still exists in \eqref{eq:nc_interpretation_for_onelayer_inputset}.
The transformation of this constraint depends on the definition of $\XC$,
    and thus it will be discussed in the subsequent subsections.

%-------------------------------------------------------------------------------------------------------

\subsection{Ellipsoidal input} \label{ssec:ellipsoid_input_onelayer_deepsdp}

Consider the case that the input set $\XC$ is an ellipsoid with the center $\hat{\x} \in \Real^{n_0}$ and radius $\rho$:
\begin{equation} \label{eq:input_region_ellipsoid}
    \XC \coloneqq \left\{ \x \,\middle|\, \left\| \x - \hat{\x} \right\|_2 \leq \rho \right\}.
\end{equation}
For \eqref{eq:input_region_ellipsoid}, the matrix set $\PC_\XC$ becomes
$$
    \PC_\XC = \left\{
        \gamma \begin{bmatrix}
            \trans{\hat{\x}}\hat{\x} - \rho^2 & -\trans{\hat{\x}} \\
            -\hat{\x} & I
        \end{bmatrix}
        \,\middle|\, \gamma \geq 0,\; \gamma \in \Real \right\},
$$
where $\gamma$ is the dual variable of DeepSDP~\eqref{eq:introduction_deepsdp}.
By  normalizing $ \PC_\XC$ with  $\gamma = 1$, we have
% Since the term $M_\mathrm{in}(P)$ in \eqref{eq:deepsdp_onelayer} contains just one variable $\gamma$,
%     the corresponding constraint of the primal SDP can be represented by one constraint
\begin{align*}
    \ip{M_\mathrm{in}\left(\begin{bmatrix}
        \trans{\hat{\x}}\hat{\x} - \rho^2 & -\trans{\hat{\x}} \\
        -\hat{\x} & I
    \end{bmatrix}\right)}{G}
    &= \sum_{j=1}^{n_0} \left\{ \trans{\left(\u^j\right)}\u^j - 2\hat{x}_j\trans{\e}\u^j + \hat{x}_j^2 \trans{\e}\e \right\} - \rho^2 \\
    &= \sum_{j=1}^{n_0} \left\| \u^j - \hat{x}_j \e \right\|_2^2 - \rho^2 \; \leq 0.
\end{align*}
Thus, the problem~\eqref{eq:nc_interpretation_for_onelayer} can be described precisely as 
\begin{equation} \label{eq:nc_interpretation_for_onelayer_ellipsoid}
        \min\limits_{\u^j,\v^i} \ \ \eqref{eq:nc_interpretation_for_onelayer_objective} \quad
        \subto \ \
            \eqref{eq:nc_interpretation_for_onelayer_nonnegative}, \,
            \eqref{eq:nc_interpretation_for_onelayer_value}, \,
            \eqref{eq:nc_interpretation_for_onelayer_eithercondition}, \,
            \sum_{j=1}^{n_0} \|\u^j - \hat{x}_j\e\|_2^2 \leq \rho^2.
\end{equation}
Applying the same decomposition  method  in \cref{sec:oneneuron} results in the following two-stage problem:
\begin{equation} \label{eq:first_stage_for_onelayer_ellipsoid}
    \begin{array}{rl}
        \min\limits_{\v^1,\ldots,\v^{n_1}} & \eqref{eq:nc_interpretation_for_onelayer_objective} \\
        \subto & \eqref{eq:nc_interpretation_for_onelayer_nonnegative}, \quad \Psi_\mathrm{ellipsoid}(\v^1,\ldots,\v^{n_1}) \leq \rho^2,
    \end{array}
\end{equation}
\begin{equation} \label{eq:second_stage_for_onelayer_ellipsoid}
    \begin{array}{rl}
        \Psi_\mathrm{ellipsoid}(\v^1,\ldots,\v^{n_1}) \coloneqq
        \min\limits_{\u^1,\ldots,\u^{n_0}} & \sum\limits_{j=1}^{n_0} \left\|\u^j - \hat{x}_j\e\right\|_2^2 \\
        \subto & \eqref{eq:nc_interpretation_for_onelayer_value},\, \eqref{eq:nc_interpretation_for_onelayer_eithercondition}.
    \end{array}
\end{equation}
The second-stage problem~\eqref{eq:second_stage_for_onelayer_ellipsoid} is a convex optimization problem with
    a quadratic objective function and linear inequality constraints.

The following lemma characterizes solutions to the second-stage problem,
    and shows that all variables are a linear combination of $\e^1$, $\v^1, \ldots, \v^{n_1}$.
\begin{lemma} \label{lem:solution_form_of_second_stage_ellipsoid}
    Suppose that $\e = \e^1$.
    For any optimal solution $\left((\u^1)^*,\ldots,(\u^{n_0})^*\right)$ of \eqref{eq:second_stage_for_onelayer_ellipsoid},
     there exist $\m \in \Real^{n_0}$ and $M \in \Real^{n_1 \times n_0}$ such that 
    $$
        (\u^j)^* = m_j \e^1 + \sum_{i=1}^{n_1} M_{ij} \v^i \quad \text{for each $j \in \left\{ 1,\ldots,n_0 \right\}$}.
    $$
\end{lemma}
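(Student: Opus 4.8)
The plan is to observe that the second-stage objective is the only place where the components of $\u^1,\ldots,\u^{n_0}$ orthogonal to a fixed low-dimensional subspace can appear, so they can be zeroed out at no feasibility cost. Concretely, fix the subspace $V \coloneqq \lspan\left\{\e^1,\v^1,\ldots,\v^{n_1}\right\} \subseteq \Real^p$. Both families of constraints in \eqref{eq:second_stage_for_onelayer_ellipsoid} depend on the $\u^j$ only through inner products with vectors of $V$: the right-hand side of \eqref{eq:nc_interpretation_for_onelayer_value} equals $\sum_{j=1}^{n_0} W_{ij}\trans{(\e^1)}\u^j + b^0_i$, and the right-hand side of \eqref{eq:nc_interpretation_for_onelayer_eithercondition} equals $\sum_{j=1}^{n_0} W_{ij}\trans{(\v^i)}\u^j + b^0_i\trans{(\e^1)}\v^i$. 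Hence every constraint is a function of the orthogonal projections of the $\u^j$ onto $V$ alone.

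First I would take an arbitrary optimal solution $\left((\u^1)^*,\ldots,(\u^{n_0})^*\right)$ and decompose each vector as $(\u^j)^* = \p^j + \q^j$ with $\p^j \in V$ and $\q^j \in V^\perp$. Since $\trans{(\e^1)}(\u^j)^* = \trans{(\e^1)}\p^j$ and $\trans{(\v^i)}(\u^j)^* = \trans{(\v^i)}\p^j$ for every $i$, the tuple $(\p^1,\ldots,\p^{n_0})$ satisfies exactly the same constraints and is therefore feasible.

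Next I would compare objective values. As $\hat{x}_j\e^1 \in V$ and $\q^j \perp V$, the Pythagorean identity gives
$$
    \left\|(\u^j)^* - \hat{x}_j\e^1\right\|_2^2 = \left\|\p^j - \hat{x}_j\e^1\right\|_2^2 + \left\|\q^j\right\|_2^2 .
$$
Summing over $j$ shows that the objective at $(\p^1,\ldots,\p^{n_0})$ is smaller than that at $\left((\u^j)^*\right)$ by $\sum_{j=1}^{n_0}\|\q^j\|_2^2 \ge 0$. Optimality of $\left((\u^j)^*\right)$ then forces $\q^j = \0$ for all $j$, so $(\u^j)^* = \p^j \in V$. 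Expanding this membership in the spanning family $\left\{\e^1,\v^1,\ldots,\v^{n_1}\right\}$ produces scalars $m_j$ and $M_{ij}$ with $(\u^j)^* = m_j\e^1 + \sum_{i=1}^{n_1} M_{ij}\v^i$, which are collected into $\m \in \Real^{n_0}$ and $M \in \Real^{n_1\times n_0}$.

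The step that needs the most care is the first one, namely checking that \emph{each} of the $2n_1$ constraints really is a function of the $V$-projections only; once this is confirmed, the projection-plus-optimality argument is routine and requires neither convexity nor the KKT conditions. I note that the representation is not asserted to be unique, which is consistent with the fact that $\e^1,\v^1,\ldots,\v^{n_1}$ may be linearly dependent.
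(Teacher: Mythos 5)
Your proof is correct, but it takes a genuinely different route from the paper. The paper observes that \eqref{eq:second_stage_for_onelayer_ellipsoid} is a convex program with linear constraints, invokes the KKT conditions, and reads the representation off the stationarity equation, which yields the explicit formulas $m_j = \hat{x}_j - \sum_i \nu_i W_{ij}/2$ and $M_{ij} = \lambda_i W_{ij}/2$ in terms of the dual multipliers $\boldsymbol{\nu}, \boldsymbol{\lambda}$. You instead argue geometrically: every constraint of \eqref{eq:second_stage_for_onelayer_ellipsoid} depends on the $\u^j$ only through the inner products $\trans{(\e^1)}\u^j$ and $\trans{(\v^i)}\u^j$, i.e.\ only through the orthogonal projection onto $V = \lspan\{\e^1,\v^1,\ldots,\v^{n_1}\}$, while the objective strictly penalizes any component in $V^\perp$ by the Pythagorean identity; optimality therefore forces $(\u^j)^* \in V$. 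Both arguments cover \emph{every} optimal solution, as the lemma requires (the paper because KKT is necessary at any minimizer of a linearly constrained convex program, you by direct comparison of objective values). Your version is more elementary --- it needs neither duality nor any constraint-qualification discussion, and it would survive even without convexity of the objective so long as the level sets respect the orthogonal decomposition --- and it isolates the real mechanism, namely that the constraints see only the $V$-projections. What it gives up is the explicit dependence of $\m$ and $M$ on the multipliers; inspecting the proof of Theorem~\ref{thm:tightness_onelayer_ellipsoid}, however, only the existence of some $m_j(\v^1,\ldots,\v^{n_1})$ and $M_{ij}(\v^1,\ldots,\v^{n_1})$ is used downstream, so your proof is fully sufficient for the role the lemma plays. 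Your closing remark that the representation need not be unique when $\e^1,\v^1,\ldots,\v^{n_1}$ are dependent is accurate and consistent with the statement, which asserts only existence.
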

\begin{proof}
    Since \eqref{eq:second_stage_for_onelayer_ellipsoid} is a convex optimization,
        any pair $((\tilde{\u}^1, \ldots, \tilde{\u}^{n_0}), (\tilde{\boldsymbol{\nu}}, \tilde{\boldsymbol{\lambda}}))$
        of the primal and dual solutions satisfies the KKT conditions:
    \begin{subequations} \label{eq:KKT_of_second_stage_for_onelayer_ellipsoid}
        \begin{empheq}[left = {\empheqlbrace \quad}, right = {}]{gather}
    %    \begin{gather}
            \eqref{eq:nc_interpretation_for_onelayer_value},\, \eqref{eq:nc_interpretation_for_onelayer_eithercondition}, \,
                \boldsymbol{\nu} \in \Real_+^{n_1}, \, \boldsymbol{\lambda} \in \Real_+^{n_1}, \\
            \begin{bmatrix} \u^1 \\ \vdots \\ \u^{n_0} \end{bmatrix}
            = \begin{bmatrix} \hat{x}_1\e^1 \\ \vdots \\ \hat{x}_{n_0}\e^1 \end{bmatrix}
            - \sum_{i=1}^{n_1} \frac{\nu_i}{2} \begin{bmatrix} W_{i1}\e^1 \\ \vdots \\ W_{in}\e^1 \end{bmatrix}
            + \sum_{i=1}^{n_1} \frac{\lambda_i}{2} \begin{bmatrix} W_{i1}\v^i \\ \vdots \\ W_{in}\v^i \end{bmatrix} \label{eq:KKT_of_second_stage_for_onelayer_ellipsoid:stationarity} \\
            \nu_i \left[{\textstyle \trans{\left(\e^1\right)}\left(\sum_{j=1}^{n_0} W_{ij}\u^j + b^0_i\e^1 \right)} - \trans{\left(\e^1\right)}\v^i\right] = 0, \quad i = 1,\ldots,n_1, \\
            \lambda_i \left[\left\|\v^i\right\|^2 - {\textstyle \trans{\left(\sum_{j=1}^{n_0} W_{ij}\u^j + b^0_i\e^1 \right)}\v^i}\right] = 0, \quad i = 1,\ldots,n_1.
    %    \end{gather}
        \end{empheq}
    \end{subequations}
    By \eqref{eq:KKT_of_second_stage_for_onelayer_ellipsoid:stationarity},
        the desired result follows by taking
        \begin{alignat*}{2}
            m_j    &= \hat{x}_j - \sum_i \frac{\nu_i W_{ij}}{2} \quad & & \text{for $j = 1,\ldots,n_0$}, \\
            M_{ij} &= \frac{\lambda_i W_{ij}}{2} \quad & & \text{for $j = 1,\ldots,n_0, \; i = 1,\ldots,n_1$}.
        \end{alignat*}
\end{proof}

Using Lemma~\ref{lem:solution_form_of_second_stage_ellipsoid},
    the first-stage problem~\eqref{eq:first_stage_for_onelayer_ellipsoid} can be rewritten as
\begin{equation} \label{eq:formulation_with_z_e1}
        \min\limits_{\v^i} \ \ \eqref{eq:nc_interpretation_for_onelayer_objective} \quad
        \subto \ \
            \eqref{eq:nc_interpretation_for_onelayer_nonnegative}, \;
            \sum\limits_{j=1}^{n_1} \left\| m_j \e^1 + \sum\limits_{i=1}^{n_1} M_{ij} \v^i \right\|_2^2 \leq \rho^2.
\end{equation}

\begin{lemma} \label{lem:collinearity_of_z_on_formulation_with_e1}
    Suppose that $\e = \e^1$.
    The problem~\eqref{eq:formulation_with_z_e1} has an optimal solution $\left((\v^1)^*, \ldots, (\v^{n_1})^*\right)$
    such that $(\v^1)^*,\ldots,(\v^{n_1})^*$ are collinear with $\e^1$.
\end{lemma}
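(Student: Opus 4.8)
The plan is to exploit that, once we fix $\e = \e^1$, both the objective \eqref{eq:nc_interpretation_for_onelayer_objective} and the nonnegativity constraints \eqref{eq:nc_interpretation_for_onelayer_nonnegative} depend on $\v^1,\ldots,\v^{n_1}$ only through their first coordinates $v^i_1 = \trans{(\e^1)}\v^i$. Writing each variable as $\v^i = v^i_1\e^1 + \w^i$ with $\w^i$ orthogonal to $\e^1$, the only place where the transverse parts $\w^i$ enter \eqref{eq:formulation_with_z_e1} is the single quadratic constraint $\sum_j \|m_j\e^1 + \sum_i M_{ij}\v^i\|_2^2 \le \rho^2$. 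So the entire argument reduces to showing that we may set every $\w^i$ to $\0$ without increasing the left-hand side of this constraint, since then the projected point $(v^1_1\e^1,\ldots,v^{n_1}_1\e^1)$ is feasible, has the same objective value, and is collinear with $\e^1$ in the sense of Definition~\ref{def:collinear}.

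I would carry this out by a Pythagorean split. Take an optimal solution $((\v^1)^\ast,\ldots,(\v^{n_1})^\ast)$ of \eqref{eq:formulation_with_z_e1} and decompose it as above. Because $\e^1$ is orthogonal to each $\w^i$, the vector $m_j\e^1 + \sum_i M_{ij}\v^i = (m_j + \sum_i M_{ij}v^i_1)\e^1 + \sum_i M_{ij}\w^i$ splits into a part along $\e^1$ and a part orthogonal to it, whence $\|m_j\e^1 + \sum_i M_{ij}\v^i\|_2^2 = (m_j + \sum_i M_{ij}v^i_1)^2 + \|\sum_i M_{ij}\w^i\|_2^2$. Summing over $j$, the constraint value equals a term depending only on the first coordinates plus the nonnegative quantity $\sum_j \|\sum_i M_{ij}\w^i\|_2^2$. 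Replacing each $\v^i$ by $v^i_1\e^1$ deletes exactly this nonnegative quantity and leaves the first term untouched; hence the constraint still holds, the objective and \eqref{eq:nc_interpretation_for_onelayer_nonnegative} are unchanged, and the resulting point is both optimal and collinear with $\e^1$.

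The delicate point, and the step I expect to require the most care, is that $m_j$ and $M_{ij}$ are not genuinely constant: by Lemma~\ref{lem:solution_form_of_second_stage_ellipsoid} they are assembled from the optimal dual multipliers $(\boldsymbol{\nu},\boldsymbol{\lambda})$ of the second-stage problem \eqref{eq:second_stage_for_onelayer_ellipsoid}, which themselves depend on $\v^1,\ldots,\v^{n_1}$. Consequently, zeroing the transverse parts alters the inner problem, and the Pythagorean inequality only controls the frozen-coefficient expression $\sum_j (m_j + \sum_i M_{ij}v^i_1)^2$; to conclude that the projected point is feasible one must still verify $\Psi_\mathrm{ellipsoid}(v^1_1\e^1,\ldots,v^{n_1}_1\e^1) \le \rho^2$. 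This reduces to checking that the collinear vector $\u$ recovered from the frozen coefficients remains feasible for the second-stage constraints at the projected data, which in turn forces constraint \eqref{eq:nc_interpretation_for_onelayer_value} to hold with equality at each index $i$ with $v^i_1 > 0$; otherwise the all-$\e^1$ vector $\u$ violates \eqref{eq:nc_interpretation_for_onelayer_eithercondition}. A naive projection of an arbitrary feasible $\v^\ast$ can fail precisely here, so the crux is to invoke the first-order (KKT) optimality of the first-stage problem \eqref{eq:first_stage_for_onelayer_ellipsoid}, via the envelope theorem applied to $\Psi_\mathrm{ellipsoid}$, to guarantee that an optimizer lies on a face where this activeness holds and the transverse parts may be discarded.
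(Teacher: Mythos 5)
Your first two paragraphs are exactly the paper's proof: it too projects an optimal $(\bar{\v}^1,\ldots,\bar{\v}^{n_1})$ onto $\operatorname{span}\{\e^1\}$, notes that the objective \eqref{eq:nc_interpretation_for_onelayer_objective} and the constraints \eqref{eq:nc_interpretation_for_onelayer_nonnegative} depend only on the first coordinates $\trans{(\e^1)}\v^i$, and uses the same Pythagorean split to show that dropping the transverse components can only decrease the left-hand side of the quadratic constraint, so the projected point is feasible, optimal, and collinear. The concern in your third paragraph is not a gap in \emph{this} lemma: problem \eqref{eq:formulation_with_z_e1} is by construction the problem in which $m_j$ and $M_{ij}$ are held fixed, and the lemma asserts collinearity of an optimizer of that fixed-coefficient problem only, so no envelope-theorem or activeness argument is required here. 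What you have identified is instead a soft spot in the surrounding development: whether \eqref{eq:first_stage_for_onelayer_ellipsoid}, in which $\Psi_\mathrm{ellipsoid}$ re-solves the inner problem so that the multipliers $(\boldsymbol{\nu},\boldsymbol{\lambda})$ of \cref{lem:solution_form_of_second_stage_ellipsoid} --- and hence $m_j$ and $M_{ij}$ --- move with $\v^1,\ldots,\v^{n_1}$, is genuinely ``represented by'' \eqref{eq:formulation_with_z_e1}. The paper handles that step in the proof of \cref{thm:tightness_onelayer_ellipsoid} by writing $m_j(\v^1,\ldots,\v^{n_1})$ and $M_{ij}(\v^1,\ldots,\v^{n_1})$ and asserting the identification without the additional justification you sketch; so your caveat is a fair criticism of the theorem's proof, but it does not affect the correctness of the lemma as stated, for which your Pythagorean argument (and the paper's) is complete.
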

\begin{proof}
    Let $(\bar{\v}^1, \ldots, \bar{\v}^{n_1})$ be an optimal solution of \eqref{eq:formulation_with_z_e1}.
    Assume on the contrary  that at least one optimal solution is not collinear with $\e^1$.
    For all $i \in \{1,\ldots,n_1\}$, we define $\hat{\v}^i \coloneqq \trans{\begin{bmatrix} \bar{v}^i_1,\, 0,\, \cdots,\, 0 \end{bmatrix}} \in \Real^n$
        as the projection of $\bar{\v}^i$ onto the set $\left\{ k\e^1 \,\middle|\, k \in \Real\right\}$.
    Then, as $\trans{\left(\e^1\right)}\hat{\v}^i = \trans{\left(\e^1\right)}\bar{\v}^i$ for all $i$,
        $\left(\hat{\v}^1,\ldots,\hat{\v}^{n_1}\right)$ satisfies the constraints~\eqref{eq:nc_interpretation_for_onelayer_nonnegative}
        with the same objective value, {\it i.e.},
    $$
        \sum_{i=1}^{n_1} c_i\trans{\left(\e^1\right)}\hat{\v}^i = \sum_{i=1}^{n_1} c_i\trans{\left(\e^1\right)}\bar{\v}^i.
    $$
    For the last constraint of~\eqref{eq:formulation_with_z_e1},
        from the equivalence of  the first elements of vectors $M_{ij} \hat{\v}^i$ and $M_{ij} \bar{\v}^i$ for any pair $(i, j)$,
        we have
    \begin{align}
        \sum\limits_{j=1}^{n_1} \left\| m_j \e^1 + \sum\limits_{i=1}^{n_1} M_{ij} \hat{\v}^i \right\|_2^2
        & = \sum\limits_{j=1}^{n_1} \left(m_j  + \sum\limits_{i=1}^{n_1} M_{ij} \bar{v}_1^i\right)^2 \nonumber \\ 
        &\leq \sum\limits_{j=1}^{n_1} \left[
            \left(m_j  + \sum\limits_{i=1}^{n_1} M_{ij} \bar{v}_1^i\right)^2 + 
            \sum_{k=2}^{n_1} \left( \sum\limits_{i=1}^{n_1} M_{ij} \bar{v}^i_{k} \right)^2 \right] \nonumber \\
        &= \sum\limits_{j=1}^{n_1}
            \left\| m_j \e^1 + \sum\limits_{i=1}^{n_1} M_{ij} \bar{\v}^i \right\|_2^2 \leq \rho^2. \label{eq:formulation_with_z_e1_inequality_rho}
    \end{align}
    % \begin{align}
    %     \sum\limits_{j=1}^{n_1} \left\| m_j \e^1 + \sum\limits_{i=1}^{n_1} M_{ij} \hat{\v}^i \right\|_2^2
    %     &\leq \sum\limits_{j=1}^{n_1} \left[
    %         \left\| m_j \e^1 + \sum\limits_{i=1}^{n_1} M_{ij} \hat{\v}^i \right\|_2^2 + 
    %         \left\| \sum\limits_{i=1}^{n_1} M_{ij} \bar{v}^i_{\{2,\ldots,n\}} \right\|_2^2 \right] \nonumber \\
    %     &= \sum\limits_{j=1}^{n_1}
    %         \left\| m_j \e^1 + \sum\limits_{i=1}^{n_1} M_{ij} \bar{\v}^i \right\|_2^2 \leq \rho^2. \label{eq:formulation_with_z_e1_inequality_rho}
    % \end{align}
    Hence, $\left(\hat{\v}^1,\ldots,\hat{\v}^{n_1}\right)$ is also an optimal solution of \eqref{eq:formulation_with_z_e1}.
    Thus, %$$\left((\v^1)^*, \ldots, (\v^{n_1})^*\right) = \left((\hat{\v}^1)^*, \ldots, (\hat{\v}^{n_1})^*\right)$$
    $\left((\hat{\v}^1)^*, \ldots, (\hat{\v}^{n_1})^*\right)$
        is an optimal solution with collinearity.
\end{proof}

From  Lemmas~\ref{lem:solution_form_of_second_stage_ellipsoid} and 
\ref{lem:collinearity_of_z_on_formulation_with_e1}, associated with each of the two stages,
    we obtain the tightness condition for the single-layer DeepSDP~\eqref{eq:introduction_deepsdp} with the ellipsoidal input set.

\begin{theorem} \label{thm:tightness_onelayer_ellipsoid}
    Suppose that Assumptions~\ref{asm:polytope_safety_set} and \ref{asm:no_effect_lastlayer} hold.
    %\cref{asm:ReLU_activation,asm:polytope_safety_set,asm:no_repeated_inequalities,asm:no_effect_lastlayer}.
    The problem~\eqref{eq:primal_for_deepsdp_onelayer} has a rank-1 matrix solution.
    In addition, if both \eqref{eq:primal_for_deepsdp_onelayer} and \eqref{eq:introduction_deepsdp} 
    have optimal solutions,
    and the feasible set of \eqref{eq:introduction_deepsdp} is bounded,
    then DeepSDP \eqref{eq:introduction_deepsdp} is a tight relaxation for the original QCQP~\eqref{eq:qcqp_for_deepsdp}.

\end{theorem}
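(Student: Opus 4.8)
The plan is to assemble the two-stage lemmas into a single collinear optimal solution of the nonconvex reformulation \eqref{eq:nc_interpretation_for_onelayer_ellipsoid}, and then invoke \cref{lem:rank_one_solution_collinear} to convert collinearity into a rank-1 solution of the primal SDP \eqref{eq:primal_for_deepsdp_onelayer}. As justified in \cref{ssec:tight_rank_constrained_SDP}, I would fix $\e=\e^1$ without loss of generality and keep this choice consistent across both stages.

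For the first (unconditional) claim, I would first apply \cref{lem:collinearity_of_z_on_formulation_with_e1} to obtain an optimal solution $((\v^1)^*,\ldots,(\v^{n_1})^*)$ of the first-stage problem \eqref{eq:first_stage_for_onelayer_ellipsoid} whose components are all collinear with $\e^1$, so that each $(\v^i)^*$ is a scalar multiple of $\e^1$. Feeding these vectors into the second-stage problem \eqref{eq:second_stage_for_onelayer_ellipsoid}, \cref{lem:solution_form_of_second_stage_ellipsoid} furnishes a matching optimal tuple of the form $(\u^j)^* = m_j\e^1 + \sum_{i} M_{ij}(\v^i)^*$. Because every $(\v^i)^*$ is already a multiple of $\e^1$, each $(\u^j)^*$ is then a linear combination of $\e^1$ alone, hence also collinear with $\e^1$. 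By the construction of the two-stage decomposition the concatenated tuple $\big((\u^j)^*,(\v^i)^*\big)$ is an optimal solution of the joint problem \eqref{eq:nc_interpretation_for_onelayer_ellipsoid} in which all vectors are collinear with $\e^1$, and applying \cref{lem:rank_one_solution_collinear} yields a rank-1 matrix solution of \eqref{eq:primal_for_deepsdp_onelayer}.

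For the conditional tightness claim, the rank-1 solution makes \eqref{eq:primal_for_deepsdp_onelayer} a tight relaxation of the QCQP \eqref{eq:qcqp_for_deepsdp}, so its optimal value equals that of the QCQP. It then remains to propagate tightness across the primal-dual pair: since the feasible set of DeepSDP \eqref{eq:introduction_deepsdp} is assumed bounded, its optimal solution set is a fortiori bounded, and with both \eqref{eq:primal_for_deepsdp_onelayer} and \eqref{eq:introduction_deepsdp} assumed to admit optimal solutions, \cref{lem:strong_duality_assumption} gives strong duality between them. Thus the optimal value of DeepSDP equals that of the primal, which in turn equals the QCQP optimal value, and DeepSDP is tight.

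The main obstacle is confirming that the collinear first-stage minimizer together with its second-stage partner genuinely realizes the optimum of the undecomposed problem \eqref{eq:nc_interpretation_for_onelayer_ellipsoid}: because $\Psi_\mathrm{ellipsoid}$ is defined as the second-stage value function, I would verify that substituting an optimal first-stage $\v$ and the matching second-stage $\u$ reconstructs a feasible point satisfying $\sum_j\|\u^j-\hat x_j\e^1\|_2^2=\Psi_\mathrm{ellipsoid}(\v)\le\rho^2$ together with the remaining second-stage constraints, and that the collinearity proved separately in each stage survives this recombination. The argument is otherwise bookkeeping, since the genuinely delicate geometric and KKT analysis has already been discharged inside \cref{lem:solution_form_of_second_stage_ellipsoid,lem:collinearity_of_z_on_formulation_with_e1}.
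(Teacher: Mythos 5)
Your proposal is correct and follows essentially the same route as the paper: the same two-stage decomposition, the same two lemmas (\cref{lem:solution_form_of_second_stage_ellipsoid} for the second stage and \cref{lem:collinearity_of_z_on_formulation_with_e1} for the first), recombination into a collinear optimum of \eqref{eq:nc_interpretation_for_onelayer_ellipsoid}, then \cref{lem:rank_one_solution_collinear} for the rank-1 solution and \cref{lem:strong_duality_assumption} for the dual tightness. The only cosmetic difference is the order of assembly (the paper first parametrizes $\u^j$ via the second-stage KKT conditions to rewrite the first stage in the form \eqref{eq:formulation_with_z_e1}, then extracts the collinear $\v^i$), and the recombination issue you flag as the "main obstacle" is exactly the point the paper handles by that rewriting.
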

\begin{proof}
    Without loss of generality, we may assume that $\e = \e^1$.
    By Lemma~\ref{lem:solution_form_of_second_stage_ellipsoid},
        the second-stage problem~\eqref{eq:second_stage_for_onelayer_ellipsoid} has an optimal solution:
    $$
        \begin{bmatrix} \left(\u^1\right)^* \\  \vdots \\  \left(\u^{n_0}\right)^* \end{bmatrix} =
        \begin{bmatrix}
            m_1\left(\v^1,\ldots,\v^{n_1}\right) \e^1 + \sum_{i=1}^{n_1} M_{i1}\left(\v^1,\ldots,\v^{n_1}\right) \v^i \\
            \vdots \\
            m_{n_0}\left(\v^1,\ldots,\v^{n_1}\right) \e^1 + \sum_{i=1}^{n_1} M_{in_0}\left(\v^1,\ldots,\v^{n_1}\right) \v^i
        \end{bmatrix}
    $$
    with some $\v^1, \ldots, \v^{n_1}$,
        where $m_j\left(\v^1,\ldots,\v^{n_1}\right)$ and $M_{ij}\left(\v^1,\ldots,\v^{n_1}\right)$ 
        depend on the variable $\v^1, \ldots, \v^{n_1}$. % on the first stage problem~\eqref{eq:first_stage_for_onelayer_ellipsoid}.
    Then, \eqref{eq:first_stage_for_onelayer_ellipsoid} can be rewritten as the form
    $$
        \begin{array}{rl}
            \min\limits_{\v^1,\ldots,\v^{n_1}} & \eqref{eq:nc_interpretation_for_onelayer_objective} \\
            \subto & \eqref{eq:nc_interpretation_for_onelayer_nonnegative},\quad
                     \sum\limits_{j=1}^{n_0} \left\|
                        \left[\mathstrut m_j\left(\v^1,\ldots,\v^{n_1}\right) - \hat{x}_j\right]\e^1 + 
                        \sum\limits_{i=1}^{n_1} M_{ij}\left(\v^1,\ldots,\v^{n_1}\right) \v^i \right\|_2^2 \leq \rho^2,
        \end{array}
    $$
    which is also represented by \eqref{eq:formulation_with_z_e1} with appropriate $m_j$ and $M_{ij}$.
    By Lemma~\ref{lem:collinearity_of_z_on_formulation_with_e1},
        if $((\v^1)^*,\ldots,$ $ (\v^{n_1})^*)$ is an optimal solution of the above problem,
        all $n_1$ points  $(\v^1)^*,\ldots,(\v^{n_1})^*$ can be  collinear with $\e^1$.
    Since each $\left(\u^j\right)^*$ depends on $(\v^1)^*,\ldots,(\v^{n_1})^*$,
        by applying Lemma~\ref{lem:solution_form_of_second_stage_ellipsoid} again,
        $\left(\u^1\right)^*, \ldots, \left(\u^{n_0}\right)^*$ are also collinear with $\e^1$ 
        under the points $(\v^1)^*,\ldots,(\v^{n_1})^*$.
    Therefore, Lemma~\ref{lem:rank_one_solution_collinear} guarantees the existence of a rank-1 matrix solution 
    to \eqref{eq:primal_for_deepsdp_onelayer}.
    The second result follows from strong duality in Lemma~\ref{lem:strong_duality_assumption} and \cref{fig:correspondence_tightness}.
\end{proof}

%---------------------------------------------------------------------------
\subsection{Rectangular input} \label{ssec:box_input_onelayer_deepsdp}

%This subsection considers 
We consider the case that the input set $\XC$ is a rectangle, {\it i.e.,}
\begin{equation} \label{eq:input_region_box}
    \XC \coloneqq \left\{ \x \,\middle|\, \left| x_j - \hat{x}_j \right| \leq \rho_j \; \text{for all $j \in \{1,\ldots,n_0\}$} \right\},
\end{equation}
where $\hat{\x} \in \Real^{n_0}$ is the center of the rectangle,
and $\rho_j$ is the length of the rectangle along with the $j$th dimension.
For \eqref{eq:input_region_box}, the matrix set $\PC_\XC$ becomes
$$
    \PC_\XC = \left\{
        \sum_{j=1}^{n_0} \gamma_j \begin{bmatrix}
            \hat{x}_j^2 - \rho_j^2 & -\hat{x}_j\trans{\left(\e^j\right)} \\
            -\hat{x}_j\e^j & \e^j\trans{\left(\e^j\right)}
        \end{bmatrix}
        \,\middle|\, \boldsymbol{\gamma} \geq \0,\; \boldsymbol{\gamma} \in \Real^{n_0} \right\},
$$
where $\boldsymbol{\gamma} \in \Real^{n_0}$ is the dual variable of DeepSDP~\eqref{eq:introduction_deepsdp}.
%Since $M_\mathrm{in}(P)$ in \eqref{eq:deepsdp_onelayer} contains the $n_0$-dimensional variable $\boldsymbol{\gamma}$,
The corresponding constraints in the primal SDP relaxation are
$$
    \ip{M_\mathrm{in}\left(\begin{bmatrix}
            \hat{x}_j^2 - \rho_j^2 & -\hat{x}_j\trans{\left(\e^j\right)} \\
            -\hat{x}_j\e^j & \e^j\trans{\left(\e^j\right)}
        \end{bmatrix}\right)}{G} \leq 0, \qquad j = 1,\ldots,n_0.
$$
We consider the same substitution of $G$ as \eqref{eq:primal_for_deepsdp_onelayer} with $\u^1,\dots,\u^{n_0}$
and $\v^1,\dots,\v^{n_1}$.
For any $j \in \{1,\ldots,n_0\}$, the left-hand side of the above inequality is
\begin{align*}
    \ip{M_\mathrm{in}\left(\begin{bmatrix}
                \hat{x}_j^2 - \rho_j^2 & -\hat{x}_j\trans{\left(\e^j\right)} \\
                -\hat{x}_j\e^j & \e^j\trans{\left(\e^j\right)}
            \end{bmatrix}\right)}{G}
    &= \trans{\left(\u^j\right)}\u^j - 2\hat{x}_j \trans{\e}\u^j + \hat{x}_j^2 \trans{\e}\e - \rho_j^2 \\
    &= \left\| \u^j - \hat{x}_j \e \right\|_2^2 - \rho_j^2.
\end{align*}
Thus, the problem~\eqref{eq:nc_interpretation_for_onelayer} can be transformed into
\begin{align} \label{eq:nc_interpretation_for_onelayer_box}
        \min\limits_{\u^j,\v^i} \ \ \eqref{eq:nc_interpretation_for_onelayer_objective} \quad
        \subto \ \
          & \eqref{eq:nc_interpretation_for_onelayer_nonnegative}, \,
            \eqref{eq:nc_interpretation_for_onelayer_value}, \,
            \eqref{eq:nc_interpretation_for_onelayer_eithercondition}, \nonumber \\
          & \|\u^j - \hat{x}_j\e\|_2 \leq \rho_j^2, \quad j = 1,\ldots,n_0.
\end{align}
To derive a two-stage problem as in \cref{sec:oneneuron},
    we assume the following.
\begin{assum} \label{assum:identity_w0}
    $W^0$ is the identity matrix, {\it i.e.,}  $n_0 = n_1$ and $W^0 = I_{n_0}$.
    % I guess this can be generalized and we can assume ``Each row of $W^0$ has only one nonzero elements.''
\end{assum}
\noindent
Then, \eqref{eq:nc_interpretation_for_onelayer_value} and \eqref{eq:nc_interpretation_for_onelayer_eithercondition} are reduced to
\begin{align*}
    \trans{\e}\v^j &\geq {\textstyle \trans{\e}\left( \u^j + b^0_i\e \right)}, \quad & & j = 1,\ldots,n_0, \\
    \left\|\v^j\right\|_2^2 &\leq {\textstyle \trans{\left( \u^{j} + b^0_j\e \right)}\v^j}, \quad & & j = 1,\ldots,n_0,
\end{align*}
and the following two-stage problem can be obtained:
\begin{equation} \label{eq:first_stage_for_onelayer_box}
    \begin{array}{rl}
        \min\limits_{\v^1,\ldots,\v^{n}} & \eqref{eq:nc_interpretation_for_onelayer_objective} \\
        \subto & \eqref{eq:nc_interpretation_for_onelayer_nonnegative}, \quad \Psi^j_\mathrm{rect}(\v^j) \leq \rho_j^2, \; j = 1,\ldots,n_0,
    \end{array}
\end{equation}
and
\begin{equation} \label{eq:second_stage_for_onelayer_box}
    \begin{array}{rl}
        \Psi^j_\mathrm{rect}(\z) \coloneqq
        \min\limits_{\u^j} & \|\u^j - \hat{x}_j\e\|_2 \\
        \subto
            & \trans{\e}\z \geq {\textstyle \trans{\e}\left( \u^j + b^0_i\e \right)}, \\
            & \left\|\z \right\|_2^2 \leq {\textstyle \trans{\left( \u^{j} + b^0_i\e \right)}\z}.
    \end{array}
\end{equation}
When $\e = \e^1$, the second-stage problem $\Psi^j_\mathrm{rect}(\z)$ is equivalent to $\Phi(\z)$ defined in \eqref{eq:oneneuron_second_stage}.
Thus, $\Psi^j_\mathrm{rect}(\z) =\|\z - \hat{x}_j\e^1\|_2^2$ follows from \cref{prop:value_second_stage},
    and the tightness of DeepSDP~\eqref{eq:introduction_deepsdp} can be shown as follows.
\begin{theorem}
    Let $\e$ be an arbitrary unit vector with $\|\e\| = 1$.
    Suppose that Assumptions~\ref{asm:polytope_safety_set}, \ref{asm:no_effect_lastlayer}, and \ref{assum:identity_w0} hold.
    Then, any solution $\left\{\left(\u^1\right)^*,\ldots,\left(\u^n\right)^*,\left(\v^1\right)^*,\ldots,\left(\v^n\right)^*\right\}$ of \eqref{eq:nc_interpretation_for_onelayer_box} 
        are collinear with $\e$.
    Thus, the problem~\eqref{eq:primal_for_deepsdp_onelayer} has a rank-1 matrix solution.
    In addition, if both \eqref{eq:primal_for_deepsdp_onelayer} and \eqref{eq:introduction_deepsdp} 
    have optimal solutions,
    and the feasible set of \eqref{eq:introduction_deepsdp} is bounded,
    then DeepSDP \eqref{eq:introduction_deepsdp} is a tight relaxation for the original QCQP~\eqref{eq:qcqp_for_deepsdp}.
    % In addition, if \eqref{eq:primal_for_deepsdp_onelayer} and \eqref{eq:introduction_deepsdp} satisfies
    %     the assumptions~(i) and (ii) of Lemma~\ref{lem:strong_duality_assumption},
    %     then \eqref{eq:introduction_deepsdp} is a tight relaxation for the original QCQP~\eqref{eq:qcqp_for_deepsdp}.
\end{theorem}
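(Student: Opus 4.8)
The plan is to reduce this rectangular single-layer case, under Assumptions~\ref{asm:polytope_safety_set}, \ref{asm:no_effect_lastlayer}, and~\ref{assum:identity_w0}, to $n_0$ independent copies of the single-neuron analysis of \cref{sec:oneneuron}. As usual we may take $\e = \e^1$ without loss of generality. The decisive structural feature is that $W^0 = I_{n_0}$ (Assumption~\ref{assum:identity_w0}) decouples the index-$j$ blocks: after this reduction the value constraints \eqref{eq:nc_interpretation_for_onelayer_value} and the quadratic constraints \eqref{eq:nc_interpretation_for_onelayer_eithercondition} for index $j$ involve only $\u^j$ and $\v^j$, and each box constraint $\|\u^j - \hat{x}_j\e^1\|_2 \le \rho_j$ involves only $\u^j$.

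First I would treat the second stage. For each $j$, the problem \eqref{eq:second_stage_for_onelayer_box} defining $\Psi^j_\mathrm{rect}$ is identical to the single-neuron second-stage problem \eqref{eq:oneneuron_second_stage} with $\hat{x}, b^0$ replaced by $\hat{x}_j, b^0_j$. Hence \cref{prop:value_second_stage} applies verbatim, giving the closed-form value of $\Psi^j_\mathrm{rect}(\z)$ and, more importantly, the optimal inner variable $(\u^j)^* = \z - b^0_j\e^1$, which is collinear with $\e^1$ whenever $\z$ is. Substituting these closed forms into \eqref{eq:first_stage_for_onelayer_box} yields a first-stage problem that is fully separable across $j$; the $j$th subproblem, minimizing $2c_j\,\trans{(\e^1)}\v^j$ subject to $\trans{(\e^1)}\v^j \ge 0$ and $\|\v^j - \hat{x}_j\e^1\|_2 \le \rho_j$, is exactly the single-neuron first-stage problem \eqref{eq:oneneuron_first_stage_revised}.

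Next I would invoke the collinearity argument from the proof of \cref{thm:tightness_oneneuron}: since both the $j$th objective term and the $j$th nonnegativity constraint depend on $\v^j$ only through $v^j_1 = \trans{(\e^1)}\v^j$, and the extreme values of $v^j_1$ over the ball $\|\v^j - \hat{x}_j\e^1\|_2 \le \rho_j$ are attained on the segment between $(\hat{x}_j + \rho_j)\e^1$ and $(\hat{x}_j - \rho_j)\e^1$, each subproblem admits an optimizer $(\v^j)^*$ collinear with $\e^1$. By the second-stage closed form, the corresponding $(\u^j)^* = (\v^j)^* - b^0_j\e^1$ is then collinear with $\e^1$ as well. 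Thus the assembled solution is collinear with $\e^1$, and \cref{lem:rank_one_solution_collinear} furnishes a rank-1 solution of \eqref{eq:primal_for_deepsdp_onelayer}, proving its tightness.

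Finally, for the added claim I would argue exactly as in \cref{thm:tightness_onelayer_ellipsoid}: under the hypotheses that both \eqref{eq:primal_for_deepsdp_onelayer} and \eqref{eq:introduction_deepsdp} attain optima and the DeepSDP feasible set is bounded, \cref{lem:strong_duality_assumption} yields strong duality, so by \cref{fig:correspondence_tightness} the tightness of the primal relaxation \eqref{eq:primal_for_deepsdp_onelayer} transfers to DeepSDP \eqref{eq:introduction_deepsdp} as a relaxation of \eqref{eq:qcqp_for_deepsdp}. The one place that genuinely needs care is the separability step: everything hinges on verifying that $W^0 = I$ really decouples \eqref{eq:nc_interpretation_for_onelayer_value}--\eqref{eq:nc_interpretation_for_onelayer_eithercondition} into independent single-neuron blocks, after which \cref{prop:value_second_stage} and \cref{thm:tightness_oneneuron} supply the rest without further computation.
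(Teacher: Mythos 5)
Your proposal is correct and follows essentially the same route as the paper: reduce via $W^0 = I_{n_0}$ to decoupled single-neuron blocks, apply \cref{prop:value_second_stage} to each second-stage problem \eqref{eq:second_stage_for_onelayer_box}, run the collinearity argument of \cref{thm:tightness_oneneuron} on each separable first-stage subproblem, and conclude with \cref{lem:rank_one_solution_collinear} and \cref{lem:strong_duality_assumption}. The only minor caveat (shared with the paper's own proof) is that invoking \cref{prop:value_second_stage} implicitly uses $\hat{x}_j \geq -b^0_j$ for each $j$, which is not stated among the theorem's hypotheses.
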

\begin{proof}
    Without loss of generality, we may assume that $\e = \e^1$.
    By Proposition~\ref{prop:value_second_stage},
        the optimal solution of the second-stage problem \eqref{eq:second_stage_for_onelayer_box} is $\u^j = \v^j - b^0_i\e$,
        and the first-stage problem \eqref{eq:first_stage_for_onelayer_box} is
    \begin{equation*}
        \begin{array}{rl}
            \min\limits_{\v^i} &  {\textstyle 2\sum_{i=1}^{n_1} c_i \, \trans{\left(\e^1\right)}\v^i} \\
            \subto
            & \trans{\left(\e^1\right)}\v^i \geq 0, \; i = 1,\ldots,n,  \\
            & \left\|\v^i - \left(b^0_i + \hat{x}_i\right)\e^1\right\|_2^2 \leq \rho_i^2, \; i = 1,\ldots,n.
        \end{array}
    \end{equation*}
    Then, the same discussion in Theorem~\ref{thm:tightness_oneneuron} can be applied to each $\v^i$,
        therefore, the first and second results of this theorem follows by Lemmas~\ref{lem:rank_one_solution_collinear} and 
        \ref{lem:strong_duality_assumption}, respectively.
\end{proof}

%\input sect6.tex
%!TEX root = ./main.tex
\section{Conclusion}\label{sec:conclusion}

%We have presented sufficient conditions for tight DeepSDPs for a one-layer NNs 
%to be tight for the three cases.

We have presented sufficient conditions under which DeepSDPs for single-layer NNs are tight in three cases.
A common aspect of these cases is the identification of a polytope safety specification set.
%A common aspect of these cases is to find a polytope safety specification set.}
For the first case, we have proved that
    the DeepSDP for a single-neuron NN provides a tight optimal solution  if the given vectors $\hat{x}$ and $b^0$ satisfies $\hat{x} \geq -b^0$.
%The second and third cases with  NNs with two or more neurons in the layer.
In the second case for the DeepSDP with ellipsoidal inputs,
   we have proved that the DeepSDP is always a tight relaxation without any assumptions.
    %we have proposed the tightness condition that the slope $\c$ of the output polytope must have at least one negative element.
The Karush-Kuhn-Tucker (KKT) conditions have been used  to analyze the optimal solutions $(\u^j)^*, (\v^i)^*$ of the first- and second-stage problems,
    as shown in Lemma~\ref{lem:solution_form_of_second_stage_ellipsoid}.
%Furthermore, 
For the third case where the input set is a rectangle, we have shown that
    the auxiliary two-stage problem can be reduced into the one in the first case
    and the tightness can be determined by the condition for the single-neuron NNs.

For  future work, it would be interesting to investigate  the extent to  which the tightness condition can be weakened  %relaxed
    by incorporating the local quadratic constraints proposed in \cite{Fazlyab2022}, %in \cref{sssec:localqc_relu},
    which have not been included in this work.
The DeepSDPs that satisfy the tightness conditions of this work remain tight even with the inclusion of local constraints.
As a result, incorporating the local constraints may lead to tighter SDP relaxations even under milder assumptions.
%Thus, adding these constraints could make more problems tight.
%In addition, in the context of NNs, 
Also, examining the tightness of SDP relaxations for QCQPs with  the ReLU function as a quadratic constraint, which was
 developed for different purposes,
%which 
%These problems also 
 %include the quadratic constraints corresponding to the ReLU function, 
 would be an interesting direction.
%    and may benefit from
 Our approach, which  decomposes the problem into a two-stage problem and use the KKT conditions, may be useful for solving the QCQPs.

\vspace{0.5cm}

%\subsection*{Data availability statement} We did not use any externally collected data in our numerical experiments. 
%The settings used in the experiments are of a theoretical nature and they are completely described in the manuscript.

\subsection*{Declarations} 

{\bf Conflict of interest} The authors have no conflict of interest to disclose.

\noindent
%{\bf Acknowledgements.}

% \bibliographystyle{plain} %nat-reversed} %elsarticle-harv}
\bibliographystyle{abbrv} %nat-reversed} %elsarticle-harv}

%\begin{small}
%\bibliography{reference.bib}
%    \bibliography{./reference}

\begin{thebibliography}{31}
\providecommand{\natexlab}[1]{#1}
\providecommand{\url}[1]{\texttt{#1}}
\expandafter\ifx\csname urlstyle\endcsname\relax
  \providecommand{\doi}[1]{doi: #1}\else
  \providecommand{\doi}{doi: \begingroup \urlstyle{rm}\Url}\fi

\bibitem[Argue et~al.(2020)Argue, K{\dotlessi}l{\dotlessi}n\chige{c}-Karzan,
  and Wang]{Argue2020necessary}
C.~J. Argue, F.~K{\dotlessi}l{\dotlessi}n\chige{c}-Karzan, and A.~L. Wang.
\newblock Necessary and sufficient conditions for rank-one generated cones.
\newblock arXiv:2007.07433, 2020.

\bibitem[Azuma et~al.(2022)Azuma, Fukuda, Kim, and Yamashita]{Azuma2021}
G.~Azuma, M.~Fukuda, S.~Kim, and M.~Yamashita.
\newblock Exact {{SDP}} relaxations of quadratically constrained quadratic
  programs with forest structures.
\newblock \emph{Journal of Global Optimization}, 82\penalty0 (2):\penalty0
  243--262, 2022.

\bibitem[Azuma et~al.(2023)Azuma, Fukuda, Kim, and Yamashita]{Azuma2022}
G.~Azuma, M.~Fukuda, S.~Kim, and M.~Yamashita.
\newblock Exact {SDP} relaxations for quadratic programs with bipartite graph
  structures.
\newblock \emph{Journal of Global Optimization}, 86:\penalty0 671 -- 691, 2023.

\bibitem[Bastani et~al.(2016)Bastani, Ioannou, Lampropoulos, Vytiniotis, Nori,
  and Criminisi]{bastani2016measuring}
O.~Bastani, Y.~Ioannou, L.~Lampropoulos, D.~Vytiniotis, A.~Nori, and
  A.~Criminisi.
\newblock Measuring neural net robustness with constraints.
\newblock \emph{Advances in neural information processing systems}, 29, 2016.

\bibitem[Beck(2014)]{Beck2014}
A.~Beck.
\newblock \emph{Introduction to Nonlinear Optimization}.
\newblock Society for Industrial and Applied Mathematics, Philadelphia, PA,
  2014.
\newblock \doi{10.1137/1.9781611973655}.
\newblock URL \url{https://epubs.siam.org/doi/abs/10.1137/1.9781611973655}.

\bibitem[Burer and Ye(2020)]{Burer2019}
S.~Burer and Y.~Ye.
\newblock Exact semidefinite formulations for a class of (random and
  non-random) nonconvex quadratic programs.
\newblock \emph{Mathematical Programming}, 181\penalty0 (1):\penalty0 1--17,
  2020.
\newblock \doi{10.1007/s10107-019-01367-2}.

\bibitem[Deng et~al.(2013)Deng, Hinton, and Kingsbury]{deng2013new}
L.~Deng, G.~Hinton, and B.~Kingsbury.
\newblock New types of deep neural network learning for speech recognition and
  related applications: An overview.
\newblock In \emph{2013 IEEE international conference on acoustics, speech and
  signal processing}, pages 8599--8603. IEEE, 2013.

\bibitem[Dutta et~al.(2018)Dutta, Jha, Sankaranarayanan, and
  Tiwari]{dutta2018output}
S.~Dutta, S.~Jha, S.~Sankaranarayanan, and A.~Tiwari.
\newblock Output range analysis for deep feedforward neural networks.
\newblock In \emph{NASA Formal Methods Symposium}, pages 121--138. Springer,
  2018.

\bibitem[Dvijotham et~al.(2018)Dvijotham, Stanforth, Gowal, Mann, and
  Kohli]{dvijotham2018dual}
K.~Dvijotham, R.~Stanforth, S.~Gowal, T.~A. Mann, and P.~Kohli.
\newblock A dual approach to scalable verification of deep networks.
\newblock In \emph{UAI}, volume~1, page~3, 2018.

\bibitem[Egmont-Petersen et~al.(2002)Egmont-Petersen, de~Ridder, and
  Handels]{egmont2002image}
M.~Egmont-Petersen, D.~de~Ridder, and H.~Handels.
\newblock Image processing with neural networks--a review.
\newblock \emph{Pattern recognition}, 35\penalty0 (10):\penalty0 2279--2301,
  2002.

\bibitem[Fazlyab et~al.(2019)Fazlyab, Morari, and Pappas]{Fazlyab2019}
M.~Fazlyab, M.~Morari, and G.~J. Pappas.
\newblock Probabilistic verification and reachability analysis of neural
  networks via semidefinite programming.
\newblock In \emph{2019 IEEE 58th Conference on Decision and Control (CDC)},
  pages 2726--2731, 2019.

\bibitem[Fazlyab et~al.(2022)Fazlyab, Morari, and Pappas]{Fazlyab2022}
M.~Fazlyab, M.~Morari, and G.~J. Pappas.
\newblock Safety verification and robustness analysis of neural networks via
  quadratic constraints and semidefinite programming.
\newblock \emph{IEEE Transactions on Automatic Control}, 67\penalty0
  (1):\penalty0 1--15, 2022.

\bibitem[G\"{a}rtner and Matousek(2014)]{Gartner2014}
B.~G\"{a}rtner and J.~Matousek.
\newblock \emph{Approximation Algorithms and Semidefinite Programming}.
\newblock Springer Publishing Company, Incorporated, 2014.
\newblock ISBN 978-3-642-22014-2.
\newblock \doi{10.1007/978-3-642-22015-9}.

\bibitem[Goldberg(2016)]{goldberg2016primer}
Y.~Goldberg.
\newblock A primer on neural network models for natural language processing.
\newblock \emph{Journal of Artificial Intelligence Research}, 57:\penalty0
  345--420, 2016.

\bibitem[Goldberg(2017)]{goldberg2017neural}
Y.~Goldberg.
\newblock \emph{Neural network methods in natural language processing}.
\newblock Morgan \& Claypool Publishers, 2017.

\bibitem[Huang et~al.(2017)Huang, Kwiatkowska, Wang, and Wu]{huang2017safety}
X.~Huang, M.~Kwiatkowska, S.~Wang, and M.~Wu.
\newblock Safety verification of deep neural networks.
\newblock In R.~Majumdar and V.~Kun{\caron{c}}ak, editors, \emph{Computer Aided
  Verification}, pages 3--29, Cham, 2017. Springer International Publishing.

\bibitem[Huang et~al.(2023)Huang, Jin, and Ruan]{huang2023verification}
X.~Huang, G.~Jin, and W.~Ruan.
\newblock \emph{Verification of Deep Learning}, pages 181--203.
\newblock Springer Nature Singapore, Singapore, 2023.
\newblock ISBN 978-981-19-6814-3.

\bibitem[Kim and Kojima(2003)]{kim2003exact}
S.~Kim and M.~Kojima.
\newblock Exact solutions of some nonconvex quadratic optimization problems via
  {SDP} and {SOCP} relaxations.
\newblock \emph{Computational Optimization and Applications}, 26\penalty0
  (2):\penalty0 143--154, 2003.

\bibitem[Kim and Kojima(2023)]{kim2021strong}
S.~Kim and M.~Kojima.
\newblock Strong duality of a conic optimization problem with a single
  hyperplane and two cone constraints.
\newblock \emph{Optimization}, pages 1--21, 2023.
\newblock \doi{10.1080/02331934.2023.2251987}.

\bibitem[K{\dotlessi}l{\dotlessi}n\chige{c}-Karzan and
  Wang(2021)]{Kilinckarzan2021exactness}
F.~K{\dotlessi}l{\dotlessi}n\chige{c}-Karzan and A.~L. Wang.
\newblock Exactness in sdp relaxations of qcqps: Theory and applications.
\newblock arXiv:2107.06885, 2021.

\bibitem[Lomuscio and Maganti(2017)]{lomuscio2017approach}
A.~Lomuscio and L.~Maganti.
\newblock An approach to reachability analysis for feed-forward relu neural
  networks (2017).
\newblock \emph{arXiv preprint arXiv:1706.07351}, 2017.

\bibitem[Luo and Chang(2009)]{luo2009sdp}
Z.-Q. Luo and T.-H. Chang.
\newblock \emph{SDP relaxation of homogeneous quadratic optimization:
  approximation bounds and applications}, pages 117--165.
\newblock Cambridge University Press, 2009.

\bibitem[Moosavi-Dezfooli et~al.(2017)Moosavi-Dezfooli, Fawzi, Fawzi, and
  Frossard]{moosavi2017universal}
S.-M. Moosavi-Dezfooli, A.~Fawzi, O.~Fawzi, and P.~Frossard.
\newblock Universal adversarial perturbations.
\newblock In \emph{Proceedings of the IEEE conference on computer vision and
  pattern recognition}, pages 1765--1773, 2017.

\bibitem[Newton and Papachristodoulou(2021)]{newton2021neural}
M.~Newton and A.~Papachristodoulou.
\newblock Neural network verification using polynomial optimisation.
\newblock In \emph{2021 60th IEEE Conference on Decision and Control (CDC)},
  pages 5092--5097. IEEE, 2021.

\bibitem[Salman et~al.(2019)Salman, Yang, Zhang, Hsieh, and
  Zhang]{salman2019convex}
H.~Salman, G.~Yang, H.~Zhang, C.-J. Hsieh, and P.~Zhang.
\newblock A convex relaxation barrier to tight robustness verification of
  neural networks.
\newblock \emph{Advances in Neural Information Processing Systems}, 32, 2019.

\bibitem[Sojoudi and Lavaei(2014)]{Sojoudi2014exactness}
S.~Sojoudi and J.~Lavaei.
\newblock Exactness of semidefinite relaxations for nonlinear optimization
  problems with underlying graph structure.
\newblock \emph{SIAM Journal on Optimization}, 24\penalty0 (4):\penalty0
  1746--1778, 2014.
\newblock \doi{10.1137/130915261}.

\bibitem[Su et~al.(2019)Su, Vargas, and Sakurai]{su2019one}
J.~Su, D.~V. Vargas, and K.~Sakurai.
\newblock One pixel attack for fooling deep neural networks.
\newblock \emph{IEEE Transactions on Evolutionary Computation}, 23\penalty0
  (5):\penalty0 828--841, 2019.

\bibitem[Tjeng et~al.(2017)Tjeng, Xiao, and Tedrake]{tjeng2017evaluating}
V.~Tjeng, K.~Xiao, and R.~Tedrake.
\newblock Evaluating robustness of neural networks with mixed integer
  programming.
\newblock \emph{arXiv preprint arXiv:1711.07356}, 2017.

\bibitem[Wang and
  K{\dotlessi}l{\dotlessi}n\chige{c}-Karzan(2021)]{Wang2021tightness}
A.~L. Wang and F.~K{\dotlessi}l{\dotlessi}n\chige{c}-Karzan.
\newblock On the tightness of {SDP} relaxations of {QCQPs}.
\newblock \emph{Mathematical Programming}, 2021.
\newblock \doi{10.1007/s10107-020-01589-9}.

\bibitem[Wong and Kolter(2018)]{wong2018provable}
E.~Wong and Z.~Kolter.
\newblock Provable defenses against adversarial examples via the convex outer
  adversarial polytope.
\newblock In \emph{International conference on machine learning}, pages
  5286--5295. PMLR, 2018.

\bibitem[Zhang(2020)]{Zhang2020}
R.~Zhang.
\newblock On the tightness of semidefinite relaxations for certifying
  robustness to adversarial examples.
\newblock In H.~Larochelle, M.~Ranzato, R.~Hadsell, M.~Balcan, and H.~Lin,
  editors, \emph{Advances in Neural Information Processing Systems}, volume~33,
  pages 3808--3820. Curran Associates, Inc., 2020.
\newblock URL
  \url{https://proceedings.neurips.cc/paper_files/paper/2020/file/27b587bbe83aecf9a98c8fe6ab48cacc-Paper.pdf}.

\end{thebibliography}
%\input{appendix.tex}
%\end{small}

\end{document}